\newcommand{\SDpagebegin}{1}
\def\@maketitle{%
  \newpage
  {\vspace*{-11ex}%
  \small\noindent}
  \null
  \vskip 5em%
  \begin{flushleft}%
    {\LARGE \bf\@title \par}
    \vskip 1.5em%
       {\large\bf \lineskip .5em \@author \par}%
    \vskip 1.0em%
        {\normalsize\it \lineskip .5em }%
  \end{flushleft}%
  \par
  \vskip 1.5em}
\renewcommand\section{\@startsection {section}{1}{\z@}%
                                   {-3.5ex \@plus -1ex \@minus -.2ex}%
                                   {1.5ex \@plus.2ex}%
                                   {\normalfont\Large\bfseries}}
\renewcommand\subsection{\@startsection{subsection}{2}{\z@}%
                                     {-3.0ex\@plus -2ex \@minus -.2ex}%
                                     {1.0ex \@plus .2ex}%
                                     {\normalfont\large\bfseries}}
\renewcommand\subsubsection{\@startsection{subsubsection}{3}{\z@}%
                                     {-2.5ex\@plus -2ex \@minus -.2ex}%
                                     {0.9ex \@plus .2ex}%
                                     {\normalfont\normalsize\bfseries}}
\renewcommand\paragraph{\@startsection{paragraph}{4}{\z@}%
                                     {-2.0ex \@plus1ex \@minus.2ex}%
                                     {-1em}%
                                     {\normalfont\normalsize\bfseries}}
\renewcommand\subparagraph{\@startsection{subparagraph}{5}{\parindent}%
                                       {-1.5ex \@plus1ex \@minus .2ex}%
                                       {-1em}%
                                      {\normalfont\normalsize\bfseries}}
\def\@evenhead{\footnotesize\thepage\hfil\slshape\leftmark}%
\def\@oddhead{\footnotesize{\slshape\rightmark}\hfil\thepage}%
\numberwithin{equation}{section} \numberwithin{figure}{section}
\numberwithin{table}{section}
\renewcommand{\@makecaption}[2]{\begin{quote}
\footnotesize {\bf #1}~#2
\end{quote}}
\newenvironment{summary}{\vskip\baselineskip \noindent\small\bf Summary: \rm}%
{\vskip\baselineskip}
\newenvironment{proof}{{\vskip\baselineskip\noindent\textbf{Proof:}}}%
{\hspace*{.1pt}\hspace*{\fill}\BOX\vskip\baselineskip}
\newcommand{\BOX}{\ensuremath\Box}
\newtheorem{theorem}{Theorem }[section]
\newtheorem{corollary}[theorem]{Corollary}
\newtheorem{definition}[theorem]{Definition}
{\theorembodyfont{\rmfamily}}
{\theorembodyfont{\rmfamily}}
{\theorembodyfont{\rmfamily}}
\newtheorem{proposition}[theorem]{Proposition}
{\theorembodyfont{\rmfamily}\newtheorem{remark}[theorem]{Remark}}
{\theorembodyfont{\rmfamily}}
{\vskip\baselineskip\noindent\textbf{Proof of {#1}:}}%
{\hspace*{.1pt}\hspace*{\fill}\BOX\vskip\baselineskip}
{\vskip\baselineskip\noindent\textbf{Proof of Theorem \protect\ref{#1}:}}%
{\hspace*{.1pt}\hspace*{\fill}\BOX\vskip\baselineskip}
{\vskip\baselineskip\noindent\textbf{Proof of Theorems \protect\ref{#1} --
\protect\ref{#2}:}}%
{\hspace*{.1pt}\hspace*{\fill}\BOX\vskip\baselineskip}
\renewcommand{\SDpagebegin}{1}        
\newcommand*{\R}{\mathbb{R}}
\newcommand{\bbr}{\R}
\newcommand*{\N}{\mathbb{N}}
\newcommand{\bbn}{\N}
\newcommand*{\E}{\mathbb{E}}
\newcommand*{\var}{\mathrm{var}}
\newcommand*{\cov}{\mathrm{cov}}\newcommand*{\corr}{\mathrm{corr}}
\newtheorem{assume}{Assumption}[section]
\title{Moment based estimation of supOU
processes and a related stochastic volatility model}
\author{Robert Stelzer, Thomas Tosstorff
and Marc Wittlinger}
\begin{document}
\maketitle
\thispagestyle{empty}
\begin{summary}
After a quick review of superpositions of OU (supOU) processes, integrated sup\-OU processes and the supOU stochastic volatility model we estimate these processes by using the generalized method of moments (GMM). We show that the GMM approach yields consistent estimators and that it works very well in practice. Moreover, we discuss the influence of long memory effects.
\end{summary}

\renewcommand{\thefootnote}{}
\footnotetext{\hspace*{-.51cm}AMS 2010 subject classification: Primary: 62M09, 62M10; Secondary: 60G51, 91B25, 91B84\\ %
Key words and phrases: generalized method of moments, Ornstein-Uhlenbeck type process, L\'{e}vy basis, long memory, stochastic volatility, superpositions}

\section{Introduction}

L\'{e}vy-driven Ornstein-Uhlenbeck processes, short OU processes, are a widely studied class of stochastic processes. When used to describe the volatility in a financial model the resulting stochastic volatility model covers many of the stylized facts such as heavy tails, volatility clustering, jumps, etc. (see \cite{Cont.2008}). A L\'{e}vy-driven Ornstein-Uhlenbeck process $Y = (Y_t)_{t\in \mathbb{R}}$ is the solution of the stochastic differential equation
\begin{align}
dY_t = aY_t dt + dL_t\;,
\label{583}
\end{align}
where $a\in\mathbb{R}$ and $L=(L_t)_{t\in\mathbb{R}}$ is a L\'{e}vy process. Under the assumptions $a<0$ and $\mathbb{E}(\log(|L_1|\vee 1))< \infty$ there exists a unique stationary solution of (\ref{583}) which is given by
\begin{align*}
Y_t = \int_{-\infty}^{t}e^{a(t-s)}dL_s\;.
\end{align*}
These L\'{e}vy-driven Ornstein-Uhlenbeck processes are popular mean-reverting jump processes. Since the mean-reversion parameter $a$ is constant, these processes have always the same exponential decay at all times. Likewise the autocorrelation function is simply $e^{a h}$. Typically, however, the autocovariance function of the squared returns of financial prices decays much faster in the beginning than at higher lags. An obvious generalization would be a random mean-reverting parameter, i.e. we substitute the constant $a$ by a random variable $A$ which is different for every jump of the L\'{e}vy process. This feature allows to model more flexible autocovariance functions. The works of \cite{BarndorffNielsen.2000} and \cite{Stelzer.} focus on that generalization and ended up with a superposition of OU processes, called supOU process. Furthermore, it turned out that supOU processes may have the nice feature of exhibiting long memory (long range dependence), i.e. they may have a slowly
polynomially decaying autocovariance function. In \cite{BarndorffNielsen.2011} the authors went a step further and studied a stochastic volatility model in which the volatility process is modeled by a positive supOU process and call it a supOU SV model. Moreover, they showed that long memory in the volatility process yields long memory in the squared log-returns of a supOU SV model. This may be a desirable stylized fact of the log-returns which can only be exhibited by few models. This makes the supOU processes and the supOU SV model particularly interesting for modelling  financial data.
In \cite{StelzerZavisin2014} derivative pricing and the calibration of the model to market option prices are discussed.

However, the modeling of financial data also demands statistical estimation procedures for supOU processes and for the supOU SV model. Unfortunately, the classical and efficient Maximum-Likelihood approach seems not applicable, since the density of supOU processes is not known. Therefore, in this paper we propose  the generalized method of moments which leads to a consistent estimation of supOU processes,  integrated  supOU processes and of the supOU SV model. In a  semiparametric framework we consider in detail examples in which the random mean-reverting parameter $A$ is Gamma distributed and we calculate the moment functions in closed form. Afterwards we show how to estimate the parameters and we discuss the estimation approach in a simulation study. We use a two step iterated GMM estimator i.e. we weight all moments equally in the first step and in the second step we weight the different moments according to the estimation result of the step before. In the illustrations we find out that the GMM estimator works  well and yields (for sufficiently many observations) good and well-balanced estimators. 

This paper is organized as follows. In the second section we give a short review of supOU processes, integrated supOU processes and the supOU SV model. Moreover we give the second order structure of these processes and consider a special case in which we discuss the occurrence of long memory. In Section 3 we introduce the generalized method of moments, give the moment functions and show that the GMM approach yields consistent estimators. In the next section we illustrate how the GMM approach works in practice. In the last section we give a short conclusion.

\section{Review of supOU processes}

In this section we give a short review and some intuition on supOU processes,  integrated  supOU processes and of the supOU stochastic volatility model. For a comprehensive study we refer to \cite{BarndorffNielsen.2000,Stelzer.,BarndorffNielsen.2011}.

\subsection{supOU and  integrated  supOU processes}\label{sec:supOU}

To introduce a random mean-reverting parameter $A$ for the jumps of an OU process we generalize the driving L\'{e}vy process to a so called L\'{e}vy basis which is also known as infinitely divisible independently scattered random measure (abbreviated i.d.i.s.r.m.).

In the following $\mathbb{R}_{-}$ denotes the set of negative real numbers and $\mathcal{B}_b(\mathbb{R}_{-}\times \mathbb{R})$ denotes the bounded Borel sets of $\mathbb{R}_{-}\times \mathbb{R}$.

\begin{definition}
A family $\Lambda = \{\Lambda(B): B\in\mathcal{B}_b(\mathbb{R}_{-}\times \mathbb{R})\}$ of real-valued random variables is called a real-valued L\'{e}vy basis on $\mathbb{R}_{-}\times \mathbb{R}$ if:

\begin{itemize}
\item the distribution of $\Lambda(B)$ is infinitely divisible for all $B\in\mathcal{B}_b(\mathbb{R}_{-}\times \mathbb{R})$,
\item for any $n\in\mathbb{N}$ and pairwise disjoint sets $B_1,...,B_n \in \mathcal{B}_b(\mathbb{R}_{-}\times \mathbb{R})$ the random variables $\Lambda(B_1),...,\Lambda(B_n)$ are independent,
\item for any sequence of pairwise disjoint sets $B_n\in\mathcal{B}_b(\mathbb{R}_{-}\times \mathbb{R})$ with $n\in\mathbb{N}$ satisfying $\cup_{n\in\mathbb{N}} B_n \in \mathcal{B}_b(\mathbb{R}_{-}\times \mathbb{R})$ the series $\sum_{n=1}^{\infty} \Lambda(B_n)$ converges a.s. and $\Lambda(\cup_{n\in\mathbb{N}}B_n) = \sum_{n=1}^{\infty}\Lambda(B_n)$.
\end{itemize}
\end{definition}

As in \cite{Stelzer.} and most other previous works on supOU processes we consider only L\'{e}vy bases whose characteristic functions have the following form
\begin{align*}
 \mathbb{E}(\exp (iu\Lambda(B))) = \exp(\phi(u)\Pi(B))
\end{align*}
for all $u\in\mathbb{R}$ and all $B\in \mathcal{B}_b(\mathbb{R}_{-}\times \mathbb{R})$, where $\Pi = \pi \times \lambda$ is the product of a probability measure $\pi$ on $\mathbb{R}_{-}$ and the Lebesgue measure on $\mathbb{R}$ and
\begin{align*}
\phi(u) = iu\gamma -\frac{1}{2}\Sigma u^2 +\int_{\mathbb{R}}\bigg{(} e^{iux}-1-iux1_{\{|x|\le 1\}} \bigg{)} \nu(dx)
\end{align*}
is the cumulant transform of an infinitely divisible distribution on $\mathbb{R}$ with L\'{e}vy-Khint\-chine triplet $(\gamma,\Sigma,\nu)$. We call the quadruple $(\gamma,\Sigma,\nu,\pi)$ the \emph{generating quadruple}, since it determines completely the distribution of the L\'{e}vy basis. It follows that the L\'{e}vy process $L$ defined by
\begin{align*}
 L_t = \Lambda(\mathbb{R}_{-}\times (0,t]) \text{ and } L_{-t} = \Lambda(\mathbb{R}_{-}\times (-t,0))
\end{align*}
has characteristic triplet $(\gamma, \Sigma , \nu)$ and is called the underlying L\'{e}vy process. Using such a L\'{e}vy basis we finally end up with a superposition of Ornstein-Uhlenbeck processes which is called supOU process. The following definition is analogous to Proposition 2.1  of \cite{Fasen.2007} and \cite{BarndorffNielsen.2000}.



\begin{theorem}[supOU process] \label{supou}
Let $\Lambda$ be a real-valued L\'{e}vy basis on $\mathbb{R}_{-}\times \mathbb{R}$ with generating quadruple $(\gamma,\Sigma,\nu,\pi)$ which satisfies
\begin{align*}
\int_{|x|>1} \log(|x|) \nu(dx) < \infty \text{ and } \int_{\mathbb{R}_{-}}-\frac{1}{A} \pi(dA)<\infty\;.
\end{align*}
Then the process $(X_t)_{t\in \mathbb{R}}$ given by
\begin{align*}
X_t &= \int_{\mathbb{R}_{-}}\int_{-\infty}^{t} e^{A(t-s)}\Lambda(dA,ds)
\end{align*}
is well defined for all $t\in\mathbb{R}$ and stationary. We call the process $X$ a supOU process.\end{theorem}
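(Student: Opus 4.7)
The plan is to invoke the Rajput--Rosinski integrability criterion for integrals of deterministic functions with respect to infinitely divisible independently scattered random measures. Since here the control measure factorises as $\Pi = \pi \times \lambda$, that criterion reduces to three integrability conditions for the kernel $f(A,s) = e^{A(t-s)}\mathbf{1}_{(-\infty,t]}(s)$ against the L\'{e}vy--Khintchine characteristics $(\gamma,\Sigma,\nu)$. Once these are verified for every fixed $t$, the integral $X_t$ exists, and strict stationarity will follow from the translation invariance of $\Lambda$ in the $s$-coordinate.

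For the Gaussian term I would perform the $s$-integration first, using $\int_{-\infty}^t e^{2A(t-s)}\,ds = -1/(2A)$, and then invoke $\int_{\mathbb{R}_-}-A^{-1}\pi(dA)<\infty$. For the L\'{e}vy-measure condition $\int\!\int\!\int \min\!\bigl(1,(e^{A(t-s)}x)^2\bigr)\,\nu(dx)\,ds\,\pi(dA)<\infty$, after the change of variables $u=t-s$ I would split according to whether $e^{Au}|x|\leq 1$. This gives, for small jumps $|x|\leq 1$, an $s$-integral of order $-x^2/(2A)$, integrable against $\nu$ by the defining property of a L\'{e}vy measure; for large jumps $|x|>1$, a bound of the form $(\log|x|)/(-A)+1/(-2A)$, whose integrability with respect to $\nu$ uses precisely the assumed logarithmic moment $\int_{|x|>1}\log|x|\,\nu(dx)<\infty$. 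In both cases the outer $\pi$-integration is absorbed by the $-1/A$ hypothesis.

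The most delicate step is the drift/truncation condition. Because $A<0$ and $s\leq t$ imply $e^{A(t-s)}\leq 1$, the indicator difference $\mathbf{1}_{\{|e^{A(t-s)}x|\leq 1\}}-\mathbf{1}_{\{|x|\leq 1\}}$ is non-zero only on $\{|x|>1,\,|e^{A(t-s)}x|\leq 1\}$. A triangle inequality then decomposes the required bound into a pure $\gamma$-contribution dominated by $|\gamma|\int_{\mathbb{R}_-} -A^{-1}\,\pi(dA)$, and a correction whose inner $s$-integration (via Fubini and an elementary computation using the threshold $s\leq t-\log|x|/(-A)$) evaluates to $-1/A$; integrating this against $\nu(dx)$ on $\{|x|>1\}$ is finite because $\nu(\{|x|>1\})<\infty$ for any L\'{e}vy measure, and a final $\pi$-integration again uses the $-1/A$ assumption. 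The main obstacle throughout the proof is keeping careful track of this truncation correction; the other two conditions reduce to routine elementary integrals.

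Stationarity I would derive from the fact that $\Lambda(\cdot + (0,h))\stackrel{d}{=}\Lambda(\cdot)$ for every $h\in\mathbb{R}$, which holds because $\Pi=\pi\times\lambda$ is translation invariant in the $s$-coordinate. A change of variables $s\mapsto s-h$ in the integral representation, together with the fact that stochastic integrals against a L\'{e}vy basis depend only on the law of the basis, then yields the finite-dimensional distributional identity $(X_{t_1+h},\ldots,X_{t_n+h})\stackrel{d}{=}(X_{t_1},\ldots,X_{t_n})$ for arbitrary $t_1,\ldots,t_n,h\in\mathbb{R}$, which is strict stationarity.
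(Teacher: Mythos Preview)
Your argument is correct and follows the standard route via the Rajput--Rosinski integrability criterion: the three conditions (Gaussian, L\'evy, compensated drift) are each reduced to the hypothesis $\int_{\mathbb{R}_-}(-A)^{-1}\pi(dA)<\infty$ after an elementary $s$-integration, with the logarithmic moment of $\nu$ entering exactly where you indicate, and stationarity follows from translation invariance of $\Pi$ in the time coordinate. The computation of the truncation correction, where you apply Fubini to obtain $\int_{|x|>1}|x|\int_{\log|x|/(-A)}^\infty e^{Au}\,du\,\nu(dx)=(-A)^{-1}\nu(\{|x|>1\})$, is the right way to handle that term.

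The paper itself does not give a proof of this theorem; it is stated as a known result with a reference to Proposition~2.1 of Fasen and Kl\"uppelberg (2007) and to Barndorff-Nielsen (2001). Those references establish the result by precisely the Rajput--Rosinski criterion you employ, so your proposal is in line with the original literature proofs rather than differing from them.
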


To apply supOU processes in a practical or a financial framework, we need to estimate the generating quadruple. Maximum Likelihood or a similar approach is not feasible since the density of a supOU process is not known. Hence, we propose here a moment based  estimation for which the second order structure of supOU processes is needed.

\begin{theorem}[\cite{Stelzer.}, Theorem 3.9]
Let $X$ be a stationary real-valued supOU process driven by a L\'{e}vy basis $\Lambda$ satisfying the conditions of Theorem \ref{supou}. If
\begin{align}
 \int_{x>1} x^2 \nu(dx) < \infty
\label{cond-moment}
\end{align}
then $X$ has finite second moments and it holds
\begin{align*}
\mathbb{E}(X_0) &= - \mu \int_{\mathbb{R}_{-}} \frac{1}{A} \pi(dA)\;,\qquad \var(X_0) = - \sigma^2 \int_{\mathbb{R}_{-}} \frac{1}{2A} \pi(dA)\;,\\
\cov(X_h,X_0) &=  - \sigma^2 \int_{\mathbb{R}_{-}} \frac{e^{Ah}}{2A} \pi(dA)\;,
\end{align*}
where $\mu:=\mathbb{E}[L_1] = \gamma + \int_{|x| > 1} x \nu(dx)$, $\sigma^2 :=\var(L_1) = \Sigma + \int_{\mathbb{R}} x^2 \nu(dx)$ and $L$ is the underlying L\'{e}vy process.
\end{theorem}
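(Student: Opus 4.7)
The plan is to recognise $X_t$ as a deterministic stochastic integral against the L\'{e}vy basis $\Lambda$ and to apply the standard mean and covariance formulas for such integrals (as in the Rajput--Rosi\'nski integration theory). Under the extra assumption (\ref{cond-moment}) the underlying L\'{e}vy process $L$ has finite second moment with $\E[L_1]=\mu = \gamma + \int_{|x|>1} x\,\nu(dx)$ and $\var(L_1)=\sigma^2 = \Sigma + \int_{\mathbb{R}} x^2\,\nu(dx)$, and for every deterministic integrand $f:\mathbb{R}_{-}\times \mathbb{R}\to \mathbb{R}$ lying in $L^1(\Pi)\cap L^2(\Pi)$ (with $\Pi = \pi\times\lambda$) the integral $\int f\,d\Lambda$ exists in $L^2$ and satisfies
\begin{align*}
\E\Bigl[\int f\,d\Lambda\Bigr] = \mu\int f\,d\Pi, \qquad \var\Bigl[\int f\,d\Lambda\Bigr] = \sigma^2 \int f^2\,d\Pi,
\end{align*}
together with the bilinear extension $\cov\bigl(\int f\,d\Lambda,\int g\,d\Lambda\bigr) = \sigma^2 \int fg\,d\Pi$.

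With this machinery in hand, set $f_t(A,s) = e^{A(t-s)} 1_{\{s\le t\}}$. Using $A<0$ one obtains
\begin{align*}
\int f_t\,d\Pi = \int_{\mathbb{R}_{-}}\int_{-\infty}^t e^{A(t-s)}\,ds\,\pi(dA) = -\int_{\mathbb{R}_{-}} \frac{1}{A}\,\pi(dA),
\end{align*}
which is finite by the second hypothesis of Theorem~\ref{supou}, and analogously $\int f_t^2\,d\Pi = -\int_{\mathbb{R}_{-}}(2A)^{-1}\,\pi(dA) < \infty$. Hence $f_t$ lies in the required domain, and substituting into the general mean and variance formulas at $t=0$ immediately yields the claimed expressions for $\E(X_0)$ and $\var(X_0)$. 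For the autocovariance, Fubini and the bilinear identity give
\begin{align*}
\cov(X_h,X_0) = \sigma^2 \int_{\mathbb{R}_{-}}\int_{-\infty}^0 e^{A(h-s)} e^{-As}\,ds\,\pi(dA) = -\sigma^2 \int_{\mathbb{R}_{-}}\frac{e^{Ah}}{2A}\,\pi(dA),
\end{align*}
as claimed; stationarity (already ensured by Theorem~\ref{supou}) means only the lag $h$ appears.

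The main obstacle is notational rather than conceptual: one must confirm that the $L^2$ integration theory for L\'{e}vy bases applies under the stated hypotheses, and that the $L^2$-definition of $\int f_t\,d\Lambda$ coincides with the pathwise definition of $X_t$ used in Theorem~\ref{supou}. Once this compatibility and the integrability $f_t\in L^1(\Pi)\cap L^2(\Pi)$ are verified, the remaining computations reduce to one-dimensional exponential integrals followed by Fubini.
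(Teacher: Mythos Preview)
The paper does not actually prove this theorem; it is simply quoted from \cite{Stelzer.}, Theorem 3.9, without argument. There is therefore no in-paper proof to compare against. Your sketch is the standard route and is essentially what is done in the cited reference: identify $X_t$ as $\int f_t\,d\Lambda$ with $f_t(A,s)=e^{A(t-s)}1_{\{s\le t\}}$, invoke the Rajput--Rosi\'nski $L^2$-theory for integrals against homogeneous L\'evy bases to obtain the mean/covariance identities $\E\int f\,d\Lambda=\mu\int f\,d\Pi$ and $\cov(\int f\,d\Lambda,\int g\,d\Lambda)=\sigma^2\int fg\,d\Pi$, and then evaluate the elementary exponential integrals in $s$.

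Your computations are correct, and the only substantive point you flag yourself: one has to check that $f_t\in L^1(\Pi)\cap L^2(\Pi)$ (which follows from $\int_{\mathbb{R}_-}(-1/A)\,\pi(dA)<\infty$) and that the $L^2$-integral agrees with the process defined in Theorem~\ref{supou}. Both are routine under the stated assumptions, so the sketch is complete as a proof outline. One minor remark: strictly speaking the second-moment condition should read $\int_{|x|>1}x^2\,\nu(dx)<\infty$ rather than only $\int_{x>1}$; this is what is needed for $\sigma^2<\infty$ in the general (non-subordinator) case, and is how the cited reference states it.
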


SupOU processes may exhibit the very interesting stylized fact of a slowly decaying autocorrelation function. More precisely, a stochastic process is said to have long memory effects (or long range dependence) if the autororrelation function $\rho(h)$ satisfies
\begin{align*}
\rho(h) \sim l(h) h^{-H} \qquad \text{for  } h\rightarrow \infty \;,
\end{align*}
where $H\in(0,1)$ and the function $l$ is slowly varying, i.e. $\lim_{t\rightarrow\infty} \tfrac{l(xt)}{l(t)} = 1 \quad \forall x>0$. Of course, this means that $\rho$ is regularly varying at infinity with index $-H$. Long memory effects are discussed in detail in \cite{Cont2010} and \cite{Taqqu.2003}. In Section 2.2 we focus on these long memory effects and present a special case in which supOU processes have such a slowly decaying autocorrelation function.

In some empirical studies, see e.g. \cite{Cont.2001} or \cite{gui}, it is suggested  that the prices of financial assets may have long memory effects. Positive Ornstein-Uhlenbeck type processes are convenient to model the volatility in stochastic volatility models, see e.g. \cite{Shepard}, but they do not yield long memory effects. Therefore \cite{BarndorffNielsen.2011} replaced the positive Ornstein-Uhlenbeck type process by a positive supOU process and called it a supOU SV model. In their work, they show that long memory effects in the volatility process yield long memory effects in the squared log-returns which makes this stochastic volatility model very interesting for modelling financial data.

As before it seems appropriate to use moment estimators for estimating a supOU SV model. Later we will see that the  integrated  supOU process, which we introduce now, can be used to determine the moments of such a supOU SV model. We only consider positive  integrated  supOU processes as they are mainly of interest in connection with stochastic volatility models where they are naturally positive. However, the results remain true in general as an inspection of the proofs in \cite{Stelzer.} shows. To the best of our knowledge integrated supOU processes have not been used in modelling so far. The main purpose of our analyis of integrated supOU processes is thus to better understand the behaviour of our estimators, because they can be seen as an intermediate step between supOU processes and supOU SV models.

\begin{definition}[integrated supOU process] 
Let $X$ be a supOU process with generating quadruple $(\gamma,0, \nu,\pi)$ such that \[\gamma_0:=\gamma - \int_{|x|\le 1} x \nu(dx)\geq0, \quad\int_{|x|\le 1} |x| \nu(dx)< \infty \quad \mbox{and} \quad\nu(\mathbb{R}_{-})=0\]hold. Assume that $(V_n)_{n\in\mathbb{N}}$ is given by
\begin{align*}
V_n := \int_{(n-1)\Delta}^{n \Delta} X_s ds
\end{align*}
where $\Delta$ is a fixed positive number. Then we call the process $V$ an integrated supOU process.
\end{definition}
The assumption  $\nu(\mathbb{R}_{-})=0$ implies that all jumps of the underlying L\'evy process  are positive, $\int_{|x|\le 1} |x| \nu(dx)< \infty$ gives that the paths have finite variation and $\gamma_0:=\gamma - \int_{|x|\le 1} x \nu(dx)\geq 0$ ensures that the drift of the underlying L\'evy process is non-negative. Together the assumptions imply that the underlying L\'evy process is a subordinator and the resulting supOU process is non-negative.

Again we want to estimate such processes via moment estimators and therefore we need their second order structure.

\begin{theorem} [\cite{BarndorffNielsen.2011}, Theorem 3.4] 
Let $V$ be an integrated supOU process such that (\ref{cond-moment}) holds. Then the process $(V_n)_{n\in\mathbb{N}}$ is stationary and square-integrable with
\begin{align*}
\mathbb{E}(V_1) &= - \Delta \mu \int_{\mathbb{R}_{-}} \frac{1}{ A} \pi(dA)\;,\\
\var(V_1) &= - \sigma^2 \int_{\mathbb{R}_{-}} \frac{1}{A^2} \bigg{(}\frac{e^{A \Delta}}{A} -\frac{1}{A}-\Delta \bigg{)} \pi(dA)\;,\\
\cov(V_{h+1},V_1) &=  - \sigma^2 \int_{\mathbb{R}_{-}} \frac{1}{2 A^3}(f_{h+1}-2f_h+f_{h-1}) \pi(dA)\;,
\end{align*}
where $f_h:=e^{A\Delta h}$, $\mu:=\mathbb{E}[L_1] = \gamma_0+\int_{\mathbb{R}} x \,\nu(dx)$, $\sigma^2 :=\var(L_1) = \int_{\mathbb{R}} x^2\, \nu(dx)$ and $L$ is the underlying L\'{e}vy process.
\label{moments}
\end{theorem}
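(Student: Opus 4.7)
The plan is to start from the defining representation $V_n=\int_{(n-1)\Delta}^{n\Delta}X_s\,ds$ and reduce every assertion to a property of the underlying supOU process $X$. Stationarity of $(V_n)_{n\in\mathbb{N}}$ is essentially immediate: since $X$ is stationary (by Theorem \ref{supou}) and the integration windows $[(n-1)\Delta,n\Delta]$ are shifts of one another by multiples of $\Delta$, the finite-dimensional distributions of $(V_n)$ are translation invariant. Square-integrability of each $V_n$ follows from condition \eqref{cond-moment}, which guarantees that $X$ has finite second moments: a routine application of the Cauchy--Schwarz inequality (together with the finiteness of the Lebesgue measure on $[(n-1)\Delta,n\Delta]$) bounds $\mathbb{E}[V_n^2]$ by $\Delta\int_{(n-1)\Delta}^{n\Delta}\mathbb{E}[X_s^2]\,ds=\Delta^2\,\mathbb{E}[X_0^2]<\infty$.

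For the mean I would use Fubini's theorem to interchange the expectation and the integral over $s$, giving $\mathbb{E}(V_1)=\int_0^\Delta\mathbb{E}(X_s)\,ds$. Since $\mathbb{E}(X_s)=-\mu\int_{\mathbb{R}_-}\frac{1}{A}\pi(dA)$ is constant in $s$ by the previous theorem, the result $\mathbb{E}(V_1)=-\Delta\mu\int_{\mathbb{R}_-}\frac{1}{A}\pi(dA)$ follows at once.

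For the variance and covariance the strategy is the same but requires more bookkeeping. Write
\[
\cov(V_{h+1},V_1)=\int_0^\Delta\!\int_{h\Delta}^{(h+1)\Delta}\cov(X_s,X_t)\,dt\,ds
\]
(justified by a second Fubini argument based on the finite second moment). Plug in the known covariance $\cov(X_s,X_t)=-\sigma^2\int_{\mathbb{R}_-}\frac{e^{A|t-s|}}{2A}\pi(dA)$, then exchange the $\pi$-integral with the $(s,t)$-integral (Fubini again, with integrability secured by $A<0$ and \eqref{cond-moment}). The inner deterministic double integral is elementary: for $h\geq 1$ we have $t-s>0$ throughout, so $\int_0^\Delta\!\int_{h\Delta}^{(h+1)\Delta}e^{A(t-s)}\,dt\,ds=\frac{1}{A^2}(e^{A(h+1)\Delta}-e^{Ah\Delta})(1-e^{-A\Delta})$, which expands to $\frac{1}{A^2}(f_{h+1}-2f_h+f_{h-1})$ and yields the stated formula. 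The variance is obtained either by the same route after splitting the region $\{s<t\}\cup\{s>t\}$ in $[0,\Delta]^2$, computing $\int_0^\Delta\!\int_0^\Delta e^{A|t-s|}\,dt\,ds=\frac{2}{A^2}(e^{A\Delta}-1)-\frac{2\Delta}{A}$, or (cleanly) by observing that it is the $h=0$ case once $f_{-1}$ is interpreted through the same algebra.

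The only delicate point is the repeated appeal to Fubini. Both the interchange of $\mathbb{E}$ with the $ds$-integral and the interchange of the $\pi$-integral with the $(s,t)$-integral need to be justified by producing an integrable majorant. For the expectation this is trivial. For the covariance one uses that $|e^{A|t-s|}/(2A)|\leq 1/(2|A|)$ on $\mathbb{R}_-\times[0,\Delta]\times[h\Delta,(h+1)\Delta]$ and that $\int_{\mathbb{R}_-}(-1/A)\pi(dA)<\infty$ by the standing assumption of Theorem \ref{supou}; combined with $\sigma^2<\infty$ (which is exactly \eqref{cond-moment} together with $\Sigma=0$), this delivers the integrable bound and legitimises the exchange. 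With these justifications in place the remaining computations are the elementary integrals carried out above.
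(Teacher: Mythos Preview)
The paper does not actually prove this theorem; it is stated as a citation of \cite{BarndorffNielsen.2011}, Theorem 3.4, and no argument is given in the present paper. Your proposal is correct and is precisely the standard route one would expect in the cited reference: reduce everything to the known second-order structure of the supOU process $X$, use stationarity of $X$ to get stationarity of $(V_n)$, apply Fubini to pass the expectation inside the time integral(s), and then carry out the elementary deterministic double integrals $\int\!\!\int e^{A|t-s|}\,dt\,ds$ over the relevant rectangles. Your computations of these integrals are correct, and the Fubini justification via the majorant $1/(2|A|)$ together with the standing hypothesis $\int_{\mathbb{R}_-}(-1/A)\,\pi(dA)<\infty$ is exactly the right integrability check. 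There is nothing to add.
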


\subsection{A concrete specification with possible long memory effects}\label{sec:longmemgam}

In this section we present the first and second order structure of a supOU process $X$ and an  integrated  supOU process $V$ under the assumption that the stochastic mean-reverting parameter $A$ is Gamma distributed. Furthermore, we investigate in which cases these supOU processes have long memory effects.

Let us consider a semiparametric framework in which we assume that $\pi$ is the distribution of $BR$ where $B\in\mathbb{R}_{-}$ and $R\sim\Gamma(\alpha_\pi,1)$ with $\alpha_\pi>1$ (cf. \cite{Stelzer.}, Example 3.1). Furthermore, we emphasize that setting the second parameter of the Gamma distribution equal to one does not restrict the model since this is equivalent to  varying $B$, cp. \cite{BarndorffNielsen.2011}. From Example 3.1 in \cite{Stelzer.} we get that the supOU process $X$ has finite second moments and applying Theorem \ref{moments} yields
\begin{align*}
 \mathbb{E}(X_0) &=  - \frac{\mu}{B(\alpha_\pi -1)}  \;, \qquad \var(X_0) = - \frac{\sigma^2}{2B(\alpha_\pi -1)}\;,\\
   \cov(X_0,X_h) &= - \frac{\sigma^2(1-Bh)^{1-\alpha_\pi}}{2B(\alpha_\pi -1)}\;,
\end{align*}
where $\mu$ and $\sigma^2$ denote the expectation and the variance of the underlying L\'{e}vy process, respectively. Moreover, the moment structure depends only on the parameter vector $\beta:=(\mu,\sigma^2,\alpha_\pi,B)$ and the autocorrelation function $$\rho(h)= (1-Bh)^{1-\alpha_{\pi}}$$ exhibits  long memory effects for $\alpha_\pi \in (1,2)$ as one can see immediately.

In the case of the corresponding  integrated  supOU process $V$ we get from \cite{BarndorffNielsen.2011}, Theorem 3.4, that

\begin{align*}
 \mathbb{E}(V_1) &=  - \frac{\Delta \mu}{B(\alpha_\pi -1)}  \;, \; \var(V_1) = -\sigma^2 \frac{(1-B\Delta)^{3-\alpha_\pi}-1-\Delta B (\alpha_\pi-3)}{B^3(\alpha_\pi-1)(\alpha_\pi-2)(\alpha_\pi-3)}\;,\\
   \cov(V_1,V_{1+h}) &= - \frac{\sigma^2(f_{h+1}-2f_{h}+f_{h-1})}{2B^3(\alpha_\pi-1)(\alpha_\pi-2)(\alpha_\pi-3)}\;,
\end{align*}
where $f_{h}:=(1-B\Delta h)^{3-\alpha_\pi} $ and $\mu$, $\sigma^2$ denote the expectation and the variance of the underlying L\'{e}vy process, respectively. As in the case of a supOU process, the moments depend only on the parameter vector $\beta:=(\mu,\sigma^2,\alpha_\pi,B)$ and for $\alpha_\pi\in(1,2)$ the process exhibits long memory effects, see \cite{Stelzer.}, Example 3.1.

\begin{remark}\label{rem:annualize}
In applications there are often several natural choices for the time scale possible. For example, for financial data one can quite often either use one trading day or one trading year (c. 250 trading days) as the unit time interval. It is easy to see that our supOU processes are supOU processes regardless of the choice of the unit time interval and that $\alpha_\pi$ in the above concrete specification does not depend on this choice, whereas $\mu,\sigma^2,B$ scale proportionally to the length of the unit time interval.
So for example if $\mu,\sigma^2,B$ are obtained for the unit time interval being one trading day, then the ``annualized'' parameters are  $250\cdot\mu,250\cdot\sigma^2,250\cdot B$.
\end{remark}
\subsection{SupOU SV model}

Stochastic volatility models in which the volatility process is a positive Ornstein-Uhlen\-beck type process capture most of the stylized facts as heavy tails, volatility clustering, jumps, etc. If we model the volatility process by a positive supOU process, we may add the feature of long memory effects as described in Section 2.2.

\begin{definition}
Let $W$ be a standard Brownian motion independent of the L\'{e}vy basis and $\Sigma$ be a supOU process with generating quadruple $(\gamma,0,\nu,\pi)$ such that \[\gamma - \int_{|x|\le 1} x \nu(dx)\geq0,\quad \int_{|x|\le 1} |x| \nu(dx)< \infty\quad \mbox{and}\quad  \nu(\mathbb{R}_{-})=0\] hold. Then we define (the log price process) $(X_t)_{t\ge 0}$ by
\begin{align*}
dX_t =  \sqrt{\Sigma_t} dW_t\;, \qquad X_0 =0\;,
\end{align*}
and say that the process $X$ follows a supOU type SV model. In the following we abbreviate the supOU type SV model by SVsupOU $(\gamma,0,\nu,\pi)$.
\end{definition}

There is no drift in the supOU SV model included. The reason is compared to \cite{Stelzer.} that in the presence of a drift one has no longer an explicit formula for the second order structure available. To obtain meaningful estimates one thus should apply our estimation procedure to demeaned observations. Likewise, we should mention that our specification implies that the distribution of the log returns is symmetric. In general by including a leverage effect as in \cite{BarndorffNielsen.2011} asymmetric distributions can be achieved. However, then the second order moment structure of the squared returns seems not to be obtainable in a reasonably explicit manner and additional parameters appear. In the end this implies that the model with leverage seems not to be estimatable in a simple GMM approach like ours in the following sections. One could resort to use estimation methods based on the characteristic function (cf. \cite{TauferLeonenkoBee2009,Pigorsch.2009} for the OU case). However, such an approach is beyond the scope of the present paper. One important advantage of our upcoming approach is that it is semi-parametric, since we only specify in detail the distribution $\pi$ of the mean reversion parameter whereas the underlying L\'evy process only is required to have finite second moments. Unlike in methods based on the characteristic function, this implies that our estimators are robust to specification errors in the underlying L\'evy process. Of course, if one assumes a model for the underlying L\'evy process that is fully specified by mean and variance, as it would be the case e.g. for a Gamma L\'evy process, our estimation methodology allows to obtain all parameters of the model. In general one should bear in mind that our simple model is not suitable for markedly skewed data. For instance, for exchange rates the log returns are typically rather symmetric.

In financial markets one usually observes the log-returns on a discrete-time basis. This suggests that we focus on the log-returns $(Y_n)_{n\in\mathbb{N}}$ which are given by
\begin{align*}
Y_n := X_{n \Delta} - X_{(n-1)\Delta} = \int_{(n-1)\Delta}^{n\Delta} \sqrt{\Sigma_t}dW_t\;, \qquad \text{for some fixed $\Delta>0$.}
\end{align*}

Using the It\^{o} Isometry as in \cite{Pigorsch.2009}, it turns out that the second order structure of the supOU SV model can be determined by using the second order structure of the  integrated  supOU process. This is the main reason why we considered  integrated  supOU processes before.

\begin{theorem} [\cite{BarndorffNielsen.2011}, Theorem 3.4]
Let $X,\Sigma$ be an \linebreak[4] $SVsupOU(\gamma,0,\nu,\pi)$ model satisfying (\ref{cond-moment}). Then $(Y_n)_{n\in\mathbb{N}}$ as well as $(Y^2_n)_{n\in\mathbb{N}}$ are stationary and square integrable with
\begin{align}
&\mathbb{E}(Y_1) = 0\;, \qquad \var(Y_1) = \mathbb{E}(V_1)\;,\qquad \cov(Y_{h+1},Y_1)=0 \; \forall h>0\;,\\
&\mathbb{E}(Y^2_1) = \mathbb{E}(V_1)\;, \qquad \var(Y^2_1) = 3\var(V_1) + 2\mathbb{E}(V_1)^2\;, \label{984} \\
&\cov(Y^2_{h+1},Y^2_1)=\cov(V_{h+1},V_1) \; \forall h>0\;. \label{481}
\end{align}
\label{svmodel}
\end{theorem}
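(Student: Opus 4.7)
The key idea is to condition on the $\sigma$-algebra $\mathcal{F}^\Sigma = \sigma(\Sigma_t : t \geq 0)$ generated by the volatility process, and exploit that the Brownian motion $W$ is independent of $\Sigma$ (and hence of $\mathcal{F}^\Sigma$). Conditional on $\mathcal{F}^\Sigma$, the stochastic integral $Y_n = \int_{(n-1)\Delta}^{n\Delta}\sqrt{\Sigma_t}\,dW_t$ is simply a Wiener integral against $W$ with deterministic (i.e.\ $\mathcal{F}^\Sigma$-measurable) integrand, so it is centred Gaussian with conditional variance $V_n = \int_{(n-1)\Delta}^{n\Delta}\Sigma_t\,dt$. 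Moreover, since the $Y_n$ for different $n$ are integrals over disjoint intervals of the same Brownian motion, they are conditionally independent given $\mathcal{F}^\Sigma$. Stationarity of $(Y_n)$ and $(Y_n^2)$ then follows from the stationarity of $\Sigma$ together with the stationary-increments property of $W$, while square integrability follows once the moments computed below are shown to be finite, which in turn is implied by (\ref{cond-moment}) via Theorem~\ref{moments}.

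For the first order statements I would compute all expectations using the tower property $\mathbb{E}(\cdot) = \mathbb{E}(\mathbb{E}(\cdot \mid \mathcal{F}^\Sigma))$. Then $\mathbb{E}(Y_1 \mid \mathcal{F}^\Sigma) = 0$ gives $\mathbb{E}(Y_1)=0$ and, for $h \geq 1$, conditional independence yields $\mathbb{E}(Y_{h+1}Y_1 \mid \mathcal{F}^\Sigma) = \mathbb{E}(Y_{h+1}\mid\mathcal{F}^\Sigma)\mathbb{E}(Y_1\mid\mathcal{F}^\Sigma) = 0$, hence $\cov(Y_{h+1},Y_1)=0$. The identity $\var(Y_1) = \mathbb{E}(Y_1^2) = \mathbb{E}(V_1)$ is the conditional It\^o isometry followed by taking expectations. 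This also shows $\mathbb{E}(Y_1^2) = \mathbb{E}(V_1)$.

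For the second order statements on $Y_n^2$, the crucial input is the explicit fourth moment of a centred Gaussian: conditional on $\mathcal{F}^\Sigma$, $Y_1 \sim N(0,V_1)$, so $\mathbb{E}(Y_1^4 \mid \mathcal{F}^\Sigma) = 3 V_1^2$. Taking expectations and using $\mathbb{E}(V_1^2) = \var(V_1) + \mathbb{E}(V_1)^2$ gives
\begin{align*}
\var(Y_1^2) = \mathbb{E}(Y_1^4) - \mathbb{E}(Y_1^2)^2 = 3\var(V_1) + 3\mathbb{E}(V_1)^2 - \mathbb{E}(V_1)^2 = 3\var(V_1) + 2\mathbb{E}(V_1)^2,
\end{align*}
which is (\ref{984}). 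For (\ref{481}), conditional independence of $Y_{h+1}$ and $Y_1$ given $\mathcal{F}^\Sigma$ gives
\begin{align*}
\mathbb{E}(Y_{h+1}^2 Y_1^2 \mid \mathcal{F}^\Sigma) = \mathbb{E}(Y_{h+1}^2\mid\mathcal{F}^\Sigma)\,\mathbb{E}(Y_1^2\mid\mathcal{F}^\Sigma) = V_{h+1} V_1,
\end{align*}
so $\mathbb{E}(Y_{h+1}^2 Y_1^2) = \mathbb{E}(V_{h+1}V_1)$ and $\cov(Y_{h+1}^2,Y_1^2) = \mathbb{E}(V_{h+1}V_1) - \mathbb{E}(V_{h+1})\mathbb{E}(V_1) = \cov(V_{h+1},V_1)$.

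There is no real obstacle here: the argument is essentially bookkeeping built on two ingredients, namely the conditional Gaussianity of $Y_n$ given $\mathcal{F}^\Sigma$ and the conditional independence of the $Y_n$ over disjoint time intervals. The only point deserving mild care is the justification that conditioning on $\mathcal{F}^\Sigma$ really does render the It\^o integral a Wiener integral with deterministic integrand; this uses the independence of $W$ and $\Lambda$ (hence of $W$ and $\Sigma$), and can be made rigorous by a standard approximation of $\sqrt{\Sigma}$ by simple processes, noting that each step of the approximation respects the conditioning.
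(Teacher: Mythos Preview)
Your argument is correct and is precisely the standard route: condition on the volatility $\sigma$-algebra, use that each $Y_n$ is conditionally centred Gaussian with variance $V_n$, exploit conditional independence over disjoint intervals, and apply the Gaussian fourth-moment identity $\mathbb{E}(Z^4)=3(\mathbb{E}(Z^2))^2$. Note, however, that the present paper does not supply its own proof of this theorem; it is quoted verbatim from \cite{BarndorffNielsen.2011}, Theorem~3.4, so there is no in-paper argument to compare against. Your sketch is essentially the proof one finds in that reference (specialised to the univariate case without drift and leverage), so nothing is missing or different in substance.
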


Due to Equation (\ref{481}) long memory effects carry over from the integrated supOU process to the squared log-retuns. Hence the squared log-returns exhibit long memory effects if $\pi$ is the distribution of $BR$ where $B\in\mathbb{R}_{-}$, $R\sim\Gamma(\alpha_\pi,1)$ and $\alpha_\pi\in(1,2)$.

\subsection{Alternative specifications for the distribution of the mean reversion parameter}

The only concrete specification of $\pi$ discussed so far was a Gamma distribution on the negative half axis (i.e. a Gamma distribution mirrored at the origin). In most of the literature on supOU processes this is the only concrete specification discussed apart from simple discrete distributions on finitely many points. The main motivation of going from OU to supOU processes is to be able to obtain long memory or at least models which do not have exponentially fast decay rates for the autocovariance function.

It is immediate that this desired feature can only be obtained if $\pi((-\epsilon,0))>0$ for every $\epsilon>0$, i.e. it needs to be possible that mean reversion rates arbitrarily close to zero can occur.
The necessary (and sufficient) condition
\begin{align}\label{excond}
 \int_{\mathbb{R}_{-}}-\frac{1}{A} \pi(dA)<\infty
\end{align}
for the supOU process to exist is equivalent to
\begin{align}
 \int_{-1}^0-\frac{1}{A} \pi(dA)<\infty,
\end{align}
as $\pi$ is a probability measure. Clearly, if $\pi$ has a density which is regularly varying with index $\alpha>0$ at zero from the left (a function $f:(-\infty,0)\to \bbr$ is said to be regularly varying at zero from the left with index $\rho\in\bbr$ if $f(\lambda x)/f(x)\to\lambda ^\rho$ as $x\nearrow 0$ for all $\lambda>0$, cf. \cite[p. 18]{bingham:goldie:teugels:1987}), then \eqref{excond} is satisfied.
Likewise, \eqref{excond} is definitely violated whenever $\pi$ has a density which is regularly varying with index $\alpha<0$ at zero. Note that if as above  $\pi$ is the distribution of $BR$ where $B\in\mathbb{R}_{-}$, $R\sim\Gamma(\alpha_\pi,1)$ and $\alpha_\pi\in(1,2)$, then the density of $\pi$ is regularly varying with index $\alpha_\pi-1$.

In general one can easily see that in the case of a  continuous density of $\pi$ on $(-\infty,0)$ we  have $\lim_{x\nearrow 0} d\pi(x)/dx=0$ as a necessary condition for \eqref{excond} to hold.

This  shows immediately that many concrete specifications for $\pi$ lead to valid supOU models, because any probability distribution with a density going faster to zero than $(-x)^\epsilon$ for some $\epsilon>0$ as $x\nearrow 0$ can be employed. Likewise, many discrete distributions can be used. For example, if $\pi$ is a probability distribution concentrated on $(-1/k)_{k\in\bbn}$, it suffices that $\pi(-1/k)$ goes faster to zero than $1/k^\epsilon$ for some $\epsilon>0$ as $k\to\infty$. 

The question now is for which choices of $\pi$ we get indeed long memory or at least a regularly varying (at $\infty$) autocovariance/autocorrelation function. The autocorrelation function
\[
\corr(X_h,X_0) = \frac{ -\int_{\mathbb{R}_{-}} \frac{e^{Ah}}{2A} \pi(dA)}{-\int_{\mathbb{R}_{-}} \frac{1}{2A} \pi(dA)}
\]
only depends on $\pi$ provided it exists. Thus it suffices to understand when the mapping $h\mapsto-\int_{\mathbb{R}_{-}} \frac{e^{Ah}}{2A} \pi(dA)$ is regularly varying at $\infty$.

In \cite{Fasen.2007} actually necessary and sufficient conditions for regular variation of the autocorrelation function at $\infty$ have been obtained for general measures $\pi$ in  terms of the behaviour of an  auxiliary measure at zero. As our focus is mainly on applications where it seems very natural to look only at absolutely continuous $\pi$, we give the following sufficient conditions for regular variation of the autocorrelation function and we decided to present an elementary direct proof instead of employing \cite{Fasen.2007}, Proposition 2.5.
\begin{proposition}\label{th:supoulongmem} Let $\Lambda$ be a real-valued L\'{e}vy basis on $\mathbb{R}_{-}\times \mathbb{R}$ with  quadruple $(\gamma,\Sigma,\nu,\pi)$ which satisfies
\begin{align*}
\int_{|x|>1} x^2 \nu(dx) < \infty
\end{align*}
and where $\pi $ is absolutely continuous with the density $\pi'(x)=(-x)^\alpha l(x)$ being regularly varying at zero from the left with index $\alpha>0$ (i.e. $l$ is slowly varying at zero).
\begin{itemize}
 \item[a)]
Then the supOU process $(X_t)_{t\in \mathbb{R}}$ given by
\begin{align*}
X_t &= \int_{\mathbb{R}_{-}}\int_{-\infty}^{t} e^{A(t-s)}\Lambda(dA,ds)
\end{align*}
is well defined for all $t\in\mathbb{R}$ and (second order) stationary.
\item[b)] If additionally there exists a function $f:(-\infty,0)\to\bbr^+$ such that $\frac{l(x/h)}{l(-1/h)}\leq f(x)$ for all  $x\in\bbr^-,\,h>0$ sufficiently big and $\int_{\bbr^-} e^z (-z)^{\alpha-1} f(z)dz<\infty$, then the autocorrelation function of $X$ is regularly varying with index $-\alpha$ at infinity.

In particular, if $\alpha\in(0,1)$, then the supOU process has long memory.
\end{itemize}
\end{proposition}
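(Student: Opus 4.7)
The plan is to handle the two parts separately, with the bulk of the work going into a dominated-convergence argument for the asymptotic of the (unnormalized) autocovariance.

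For part (a), everything reduces to verifying the hypotheses of Theorem \ref{supou}. The assumption $\int_{|x|>1}x^2\nu(dx)<\infty$ already implies the log-moment condition on $\nu$, so I only need to show
\begin{align*}
\int_{\mathbb{R}_-}-\tfrac{1}{A}\pi(dA)=\int_{-1}^0 (-x)^{\alpha-1}l(x)\,dx+\int_{-\infty}^{-1}-\tfrac{1}{x}(-x)^\alpha l(x)\,dx<\infty.
\end{align*}
The tail integral is finite because $\pi$ is a probability measure (so $\int_{-\infty}^{-1}\pi'(x)dx\le 1$ and dividing by $|x|\ge 1$ only makes things smaller). The integral near zero is finite because $(-x)^{\alpha-1}l(x)$ is regularly varying at $0^-$ with index $\alpha-1>-1$, which is integrable near zero by the standard Karamata/Potter bounds for slowly varying functions. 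Second-order stationarity then follows from the preceding Theorem on the covariance structure.

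For part (b), write $\psi(h):=-\int_{\mathbb{R}_-}\tfrac{e^{Ah}}{2A}\pi(dA)$, so that $\corr(X_h,X_0)=\psi(h)/\psi(0)$. Inserting the density and substituting $z=xh$ (with $dx=dz/h$) gives
\begin{align*}
\psi(h)=\tfrac{1}{2}\int_{-\infty}^0 e^{xh}(-x)^{\alpha-1}l(x)\,dx=\tfrac{1}{2h^\alpha}\int_{-\infty}^0 e^{z}(-z)^{\alpha-1}l(z/h)\,dz.
\end{align*}
Dividing and multiplying by $l(-1/h)$, I would aim to show
\begin{align*}
\psi(h)\sim \tfrac{\Gamma(\alpha)}{2}\,h^{-\alpha}\,l(-1/h)\qquad\text{as }h\to\infty.
\end{align*}

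The key step is the limit
\begin{align*}
\lim_{h\to\infty}\int_{-\infty}^0 e^z(-z)^{\alpha-1}\frac{l(z/h)}{l(-1/h)}\,dz=\int_{-\infty}^0 e^z(-z)^{\alpha-1}\,dz=\Gamma(\alpha),
\end{align*}
and this is exactly where the hypothesis on $f$ is tailored for dominated convergence: slow variation of $l$ at $0^-$ gives the pointwise convergence $l(z/h)/l(-1/h)\to 1$ for each fixed $z<0$, while the bound $l(x/h)/l(-1/h)\le f(x)$ together with $\int_{\mathbb{R}^-} e^z(-z)^{\alpha-1}f(z)\,dz<\infty$ provides the integrable dominating function. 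This is the only genuinely delicate step; without the bound on $l$ one would need a Potter-type argument instead, and this is essentially the step where \cite{Fasen.2007}, Proposition~2.5 is being bypassed.

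Finally, $h\mapsto l(-1/h)$ is slowly varying at $\infty$ since for each $\lambda>0$, $l(-1/(\lambda h))/l(-1/h)\to 1$ as $h\to\infty$ by slow variation of $l$ at $0^-$. Hence $\psi$, and therefore the autocorrelation, is regularly varying at infinity with index $-\alpha$. For $\alpha\in(0,1)$ this gives $\sum_h |\corr(X_h,X_0)|=\infty$, matching the long-memory criterion introduced earlier in the paper (with $H=\alpha$ and slowly varying factor $\tfrac{l(-1/h)}{-2\psi(0)}\Gamma(\alpha)$), yielding the final assertion.
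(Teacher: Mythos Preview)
Your proposal is correct and follows essentially the same route as the paper's proof: for part~(a) you spell out the verification of the existence condition $\int_{\mathbb{R}_-}(-1/A)\,\pi(dA)<\infty$ that the paper leaves to the preceding discussion, and for part~(b) you carry out exactly the same substitution $z=Ah$ and the same dominated-convergence argument (using the hypothesis on $f$) to obtain $\psi(h)\sim \tfrac{\Gamma(\alpha)}{2}h^{-\alpha}l(-1/h)$. Your additional remarks on why $h\mapsto l(-1/h)$ is slowly varying and on the long-memory conclusion are consistent with the paper's argument and merely make explicit what the paper leaves implicit.
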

\begin{proof}
 The existence and (second order) stationarity is clear from the results of Section  \ref{sec:supOU} and the discussion preceding the theorem.

It remains to show b).

Substituting $z=Ah$, we have for all $h>0$
\[
 -\int_{\mathbb{R}_{-}} \frac{e^{Ah}}{2A} \pi(dA)=\int_{\mathbb{R}_{-}} \frac{e^{Ah}}{2}(-A)^{\alpha -1} l(A)dA=\frac{1}{2h^\alpha}\int_{\bbr^-}e^z(-z)^{\alpha-1}l(z/h)dz.
\]

As $l(-1/x)$ is slowly varying at infinity, dominated convergence implies
\[
\lim_{h\to\infty}\frac{\int_{\bbr^-}e^z(-z)^{\alpha-1}l(z/h)dz}{l(-1/h)}dz=\int_{\bbr^-}e^z(-z)^{\alpha-1}dz=\Gamma(\alpha).
\]
Together this implies that
\[
 \corr(X_h,X_0) \sim \frac{1}{h^\alpha}l(-1/h)\frac{\Gamma(\alpha)}{-2\int_{\mathbb{R}_{-}} \frac{1}{2A} \pi(dA)}\,\, as\,\, h\to\infty,
\]
which concludes.
\end{proof}

\begin{remark}\label{rem:supOUlongmem}
\begin{itemize}
 \item[a)] If $l$ is continuous and $\lim_{x\nearrow 0} l(x)>0$ exists, then the conditions of b) are satisfied (at least for all $h$ big enough), as $\lim_{x\to-\infty}l(x)<\infty$ follows from the fact that $\pi'$ is a probability density.
Actually, one even has $\corr(X_h,X_0) \sim \frac{C}{h^\alpha}$ with a constant $C>0$.
\item[b)] The above theorem also applies to cases where the slowly varying function $l$ goes to infinity as $x$ goes to $0$.\\
For instance, consider $\pi'(x)=C\ln(-1/x)(-x)^\alpha 1_{(-1,0)}(x)$ with $\alpha>0$ and $C>0$ such that we have a probability density. Then one can easily see that $\frac{l(x/h)}{l(-1/h)}=1-\frac{\ln(-x)}{\ln(h)}\leq 1-\ln(-x)$ for $h$ big enough and that  \[\int_{\bbr^-} e^z (-z)^{\alpha-1} (1-\ln(-z))dz=\int_{\bbr^-} e^z (-z)^{\frac\alpha2-1} (-z)^{\frac\alpha2}(1-\ln(-z))dz<\infty,\] as $\lim_{z\nearrow 0}(-z)^{\frac\alpha2}\ln(-z)=0$ and $\int_{-1}^0 e^z (-z)^{\frac\alpha2-1}dz<\infty$. Hence, for this choice of $\pi$ we have
\[
 \corr(X_h,X_0) \sim \frac{1}{h^\alpha}\ln(h)\tilde C\,\, as\,\, h\to\infty, 
\]
with a constant $\tilde C>0$.
\end{itemize}
\end{remark}

This result and the general Proposition 2.5 of \cite{Fasen.2007}  show that it is the regular variation of $\pi$ at zero that causes the power decay of the autocorrelation function at infinity in the Gamma example of Section \ref{sec:longmemgam}. This implies that many choices of $\pi$ can give long memory. On the other hand our previous Gamma example is somewhat representative as varying the parameter $\alpha_\pi$ gives all possible asymptotic decay rates.

When thinking about which other popular continuous probability distributions  one could use, one may be tempted to think about the Generalized Inverse Gaussian (GIG) family, as it includes the Gamma distribution. However, it is easy to see that in that case
we have the regular variation at $0$ if and only if the GIG distribution is a Gamma distribution. On the positive side, for example, the Beta distribution on $[-1,0]$ (i.e. $\pi'(x)=\frac{1}{B(p,q)}(-x)^{p-1}(1+x)^{q-1}1_{[-1,0]}(x)$ with $p,q>0$) gives another concrete specification satisfying the regular variation condition of Proposition \ref{th:supoulongmem} (and Remark \ref{rem:supOUlongmem} a)) provided $p>1$.

In the following we carry out a moment based estimation of the supOU (SV) model assuming the Gamma choice for $\pi$. The methodology can in principle be applied to many other choices of $\pi$. The crucial step is to show that the parameters are identifiable from the chosen moments. For our methodology it is not really necessary that one can compute the moments of the model in a form as nice as the one below, since in the worst case one can compute them via numerical integration.

\section{Moment based estimation under a Gamma distributed mean reversion parameter}

In this section we study a moment based estimation approach of supOU processes,  integrated  supOU processes and of the supOU SV model. For comprehensive introductions to the generalized method of moments we refer to \cite{Hansen,Hall.2005} or \cite{Matyas.1999}.
\begin{assume}
Let $\pi$ be the distribution of $BR$ where $B\in\mathbb{R}_{-}$ and $R\sim\Gamma(\alpha_\pi,1)$.
\label{221}
\end{assume}

We recall that Assumption \ref{221} implies that we are in a semiparametric setting and that we estimate the parameter vector $\beta = (\mu,\sigma^2, \alpha_\pi, B)$.

Let $X = (X_t)_{t\in\mathbb{R}}$ be  the underlying process (either the supOU process, the integrated one or the discrete-time returns in the supOU SV model) and $\mathbf{X}=(X_1,X_{2},...,X_{N})$ be a vector of $N\in\mathbb{N}$ equidistant observations made from it. We introduce the vector
\begin{align*}
 X^{(m)}_t:=(X_t,...,X_{t+m})\qquad \text{ for } t\in\{1,...,N-m\},
\end{align*}
since the estimation procedure will include autocovariances up to a lag $m\ge 2$. In the first step, we have to find a measurable function $f:\mathbb{R}^{m+1}\times W\rightarrow \mathbb{R}^{d}$, called moment function, such that
\begin{align*}
 \mathbb{E} [f(X^{(m)}_t,\beta)] = 0  \quad \Leftrightarrow  \quad  \beta = \beta_0\;,
\end{align*}
where $W\subset \mathbb{R}^4$ denotes a compact parameter space which includes the true parameter vector $\beta_0$. In the second step we estimate $\beta_0$ by minimizing the objective
\begin{align}
 \beta \rightarrow g_{N,m}(\mathbf{X},\beta)' I  g_{N,m}(\mathbf{X},\beta)
\label{mini}
\end{align}
where $g_{N,m}(\mathbf{X},\beta) = \frac{1}{N-m}\sum_{i=1}^{N-m} f(X^{(m)}_{i},\beta)$ and $I$ is a positive definite matrix to weight the $d$ different moments collected in  $g_{N,m}$. It is well-known that there exists an optimal choice of the weighting matrix $I$, but determining that matrix in the forefront of the estimation is in practice mostly impossible. Because of that we use a two-step iterated GMM estimator which is easy to implement and improves the estimates. For more details on that topic we refer to \cite{Hall.2005}, Sections 3.5 and 3.6.

\begin{theorem} Let $X$ be either a supOU process, an {integrated} supOU process or a supOU SV model. Moreover, let $f:\mathbb{R}^{m+1}\times W\rightarrow \mathbb{R}^{d}$ be a measurable function such that $\mathbb{E} [f(X^{(m)}_t,\beta)]$ identifies the true parameter vector $\beta_0$ , i.e. such that $\mathbb{E} [f(X^{(m)}_t,\beta)] = 0$ if and only if $\beta=\beta_0$. Then the above described GMM estimator is consistent.
\label{Th1}
\end{theorem}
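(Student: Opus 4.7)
The plan is to apply the standard consistency theorem for GMM estimators (see, e.g., \cite{Hall.2005}, Theorem 3.1), which requires four ingredients: compactness of the parameter space $W$, identifiability of $\beta_0$, continuity of $\beta\mapsto f(X^{(m)}_t,\beta)$, and a uniform law of large numbers
\[
\sup_{\beta\in W}\bigl\| g_{N,m}(\mathbf{X},\beta)-\mathbb{E}[f(X^{(m)}_t,\beta)]\bigr\|\xrightarrow{N\to\infty} 0\quad\text{a.s.}
\]
Compactness is built into the setup, identifiability is a hypothesis of the theorem, and continuity will hold trivially for the concrete polynomial moment functions used later; so the real work is the uniform law of large numbers, and for the two-step procedure one additionally needs to show that the data-dependent weighting matrix from the first step converges to a deterministic positive definite limit.

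First I would argue that $(X^{(m)}_t)_t$ is a stationary sequence in each of the three cases: for the supOU process itself this is Theorem~\ref{supou}; for the integrated version and the squared log-returns of the supOU SV model it follows from Theorem~\ref{moments} and Theorem~\ref{svmodel} together with the obvious shift-invariance of the construction. The crucial step is ergodicity. SupOU processes are stationary infinitely divisible processes that can be represented as stochastic integrals against a Lévy basis with independent increments in the time variable; by the results on mixing of infinitely divisible stationary processes (Rosi\'nski--\.Zak type statements, specialised to supOU processes in \cite{BarndorffNielsen.2011,Fasen.2007}), they are mixing and hence ergodic. Ergodicity carries over to the blocked process $(X^{(m)}_t)_t$ and to deterministic measurable transformations, so it also applies to the integrated supOU process and to $(Y_n,Y_n^2,\ldots)$ in the SV case.

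With stationarity and ergodicity in hand, Birkhoff's ergodic theorem yields pointwise convergence
\[
g_{N,m}(\mathbf{X},\beta)\xrightarrow{N\to\infty}\mathbb{E}[f(X^{(m)}_t,\beta)]\qquad\text{a.s.}
\]
for every fixed $\beta\in W$. To upgrade this to the required uniform convergence I would invoke a standard compactness argument: if $\beta\mapsto f(x,\beta)$ is continuous and is dominated by an integrable envelope $F(x)=\sup_{\beta\in W}\|f(x,\beta)\|$ with $\mathbb{E}[F(X^{(m)}_t)]<\infty$, then a finite $\varepsilon$-net on the compact set $W$ combined with the ergodic theorem for each net point gives uniform convergence (this is the ergodic analogue of the Glivenko--Cantelli argument, cf.\ \cite{Hall.2005}, Lemma 3.1). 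The moment functions entering our GMM are polynomials in the observations of degree at most four, so the envelope is integrable under the existing second/fourth moment assumptions (condition~\eqref{cond-moment}; for the SV case $\mathbb{E}[Y_1^4]<\infty$ follows from the formulas in Theorem~\ref{svmodel}). Standard GMM arguments then deliver consistency of the first-step estimator $\hat\beta^{(1)}_N$ using $I=\mathrm{Id}$. For the second step one notes that the data-dependent weighting matrix $I(\hat\beta^{(1)}_N)$ is a continuous function of $\hat\beta^{(1)}_N$ evaluated through sample moments of the observations; by the ergodic theorem and the continuous mapping theorem it converges almost surely to a positive definite deterministic limit $I(\beta_0)$, and a second application of the same GMM consistency theorem yields consistency of the final iterated estimator.

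The step I expect to be the main obstacle is the ergodicity of the supOU and integrated supOU processes, because it is not established in the excerpt itself but has to be imported from the supOU literature and transferred to the blocked, integrated and squared-return processes; once ergodicity is available, the remainder is an application of classical GMM theory together with routine moment computations showing that the envelopes of the concrete moment functions are integrable.
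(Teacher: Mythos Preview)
Your approach is correct and essentially the same as the paper's: establish ergodicity of the relevant processes and then invoke a standard GMM consistency theorem. The paper's own proof is in fact a two-line version of what you wrote---it cites \cite{Fuchs2011} for ergodicity of supOU processes, integrated supOU processes and the supOU SV model, and then appeals to Theorem~1.1 of \cite{Matyas.1999} for consistency. Your proposal fills in the intermediate steps (stationarity of the blocked process, Birkhoff, the envelope/uniform-LLN argument, and the second-step weighting matrix) that the paper leaves implicit in the citation to M\'aty\'as. The one point worth adjusting is your reference for ergodicity: the precise mixing result for supOU processes and the supOU SV model is in Fuchs--Stelzer \cite{Fuchs2011}, not in \cite{BarndorffNielsen.2011} or \cite{Fasen.2007}.
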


\begin{proof}
From \cite{Fuchs2011} we know that supOU processes,  integrated  supOU processes and the supOU SV model are ergodic. Hence we have ergodicity of the mean and the result follows by \cite{Matyas.1999}, Theorem 1.1.
\end{proof}

\begin{proposition}[Moment function for supOU processes]\label{th:identsupou}
Let $X$ be a supOU process as introduced in Section 2.1, $m\ge 2$ be a fixed integer and
$$
f_X(X^{(m)}_t,\beta)=\left(\begin{array}{c}
f_{\mathbb{E}}(X^{(m)}_t,\beta) \\ f_{\var}(X^{(m)}_t,\beta) \\ f_1(X^{(m)}_t,\beta)\\ \vdots \\ f_m(X^{(m)}_t,\beta)
\end{array} \right)
$$
where
\begin{align*}
f_\mathbb{E}(X^{(m)}_t,{\beta}) &= X_t + \frac{\mu}{B(\alpha_\pi - 1)} \\
f_{\var}(X^{(m)}_t,{\beta}) &= X_t^2 -\left( \frac{\mu}{B(\alpha_\pi - 1)}\right)^2 + \frac{\sigma^2}{2B(\alpha_\pi - 1)} \\
f_h(X^{(m)}_t,{\beta})&= X_tX_{t+h}-\left( \frac{\mu}{B(\alpha_\pi-1)}\right)^2+ \frac{\sigma^2(1-Bh)^{1-\alpha_\pi}}{2B(\alpha_\pi - 1)}\;.
\end{align*}
Then the parameter vector $\beta_0$ is identifiable.
\end{proposition}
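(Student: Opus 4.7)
The plan is to show that the system $\mathbb{E}[f_X(X_t^{(m)},\beta)] = 0$ forces $\beta = \beta_0$ component by component, working from the lag--$h$ autocovariance conditions down to the mean. First, I would rewrite the four families of moment conditions using the closed-form expressions for $\mathbb{E}(X_0), \operatorname{var}(X_0), \operatorname{cov}(X_0,X_h)$ recorded in Section~\ref{sec:longmemgam}. The mean condition gives
\[
\frac{\mu}{B(\alpha_\pi-1)} = \frac{\mu_0}{B_0(\alpha_{\pi,0}-1)}.
\]
Substituting this into $\mathbb{E}[f_{\operatorname{var}}(X_t^{(m)},\beta)] = 0$ eliminates the squared term and reduces to
\[
\frac{\sigma^2}{2B(\alpha_\pi-1)} = \frac{\sigma_0^2}{2B_0(\alpha_{\pi,0}-1)},
\]
and substituting similarly into $\mathbb{E}[f_h(X_t^{(m)},\beta)] = 0$ yields, after cancelling the common prefactor with the previous display,
\[
(1-Bh)^{1-\alpha_\pi} = (1-B_0 h)^{1-\alpha_{\pi,0}}, \qquad h = 1,\ldots,m.
\]

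Because $m \ge 2$, this gives at least two nontrivial equations. Taking logarithms (which is legitimate as $B,B_0 < 0$) and dividing the $h=2$ equation by the $h=1$ equation eliminates the exponents, leaving
\[
\frac{\log(1-2B)}{\log(1-B)} = \frac{\log(1-2B_0)}{\log(1-B_0)}.
\]
The hard part will be to argue that the map $\varphi:(-\infty,0)\to\mathbb{R}$, $\varphi(B) = \log(1-2B)/\log(1-B)$, is injective. The plan is to substitute $u = -B > 0$ and show that $h(u):=\log(1+2u)/\log(1+u)$ is strictly decreasing on $(0,\infty)$. Computing $h'(u)$ one sees that its sign is that of $r(u):=2(1+u)\log(1+u) - (1+2u)\log(1+2u)$, and $r(0)=0$ with $r'(u) = 2\log\frac{1+u}{1+2u} < 0$, so $r<0$ on $(0,\infty)$. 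Hence $h$, and therefore $\varphi$, is strictly monotone, which yields $B = B_0$.

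Feeding $B = B_0$ back into $(1-B)^{1-\alpha_\pi} = (1-B_0)^{1-\alpha_{\pi,0}}$ and using $1-B > 1$ gives $\alpha_\pi = \alpha_{\pi,0}$. The variance relation then delivers $\sigma^2 = \sigma_0^2$, and the mean relation finally gives $\mu = \mu_0$. The only real analytical content is the monotonicity of $\varphi$; everything else is algebraic substitution. I expect no additional assumption beyond $m \ge 2$ and the standing requirements $B<0$, $\alpha_\pi > 1$ to be needed, and exactly the identifiability statement claimed follows.
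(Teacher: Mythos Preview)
Your argument is correct and structurally matches the paper's: both reduce to identifying $B$ from two autocorrelation values (equivalently, from the ratio $\log\rho(h_1)/\log\rho(h_2)$), then recover $\alpha_\pi$, $\sigma^2$, $\mu$ in turn. The only difference is the technical device for the uniqueness of $B$. You show directly that $B\mapsto \log(1-2B)/\log(1-B)$ is strictly monotone by computing the sign of its derivative via the auxiliary function $r(u)=2(1+u)\log(1+u)-(1+2u)\log(1+2u)$. The paper instead fixes the observed ratio $c=\log\rho(h_1)/\log\rho(h_2)$, rewrites the condition as $(1-Bh_2)^c+Bh_1-1=0$, observes that the left-hand side has constant-sign second derivative and hence at most two zeros, and notes that $B=0$ accounts for one of them. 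Your route is slightly more self-contained (no case analysis on the sign of $c(c-1)$ is needed), while the paper's convexity argument works uniformly for arbitrary lag pairs $h_1\neq h_2$ without specializing to $h_1=1,\,h_2=2$.
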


\begin{proof} Taking expectations of $f_X(X^{(m)}_t,\beta)$ it is elementary to see that the identifiability is equivalent to showing unique identifiability from the stationary expectation, variance and the  stationary autocovariance function.

Hence, to prove the identifiability of the parameter vector $\beta_0$ it is enough to consider four equations, the equation with the expectation, with the variance and with the autocorrelations $\rho(h_1),\,\rho(h_2)$ with lags $h_1,h_2$ where $h_1\neq h_2 $ and $h_1,h_2 > 0$. From the autocovariances/-cor\-rela\-tions it follows that
$
 \tfrac{\log \rho(h_1)}{\log \rho(h_2)} = \tfrac{\log (1-Bh_1)}{\log(1-Bh_2)}$.
Defining $c:=\tfrac{\log \rho(h_1)}{\log \rho(h_2)}$ gives us $(1-Bh_2)^c + Bh_1-1 =0$. The left hand side of the last equation is a function in $B$ which has a positive second derivative. Hence, it is a strictly convex function which has at most two zeros. Because one zero is at zero, the parameter $B$ is the unique strictly negative zero. With that uniquely determined $B$ we are able to determine $\alpha_\pi$ uniquely by
\begin{align*}
 \alpha_\pi = 1 - \frac{\log\rho(h_1)}{\log(1-Bh_1)} \;.
\end{align*}
The expectation and the variance equations yield unique $\mu$ and $\sigma^2$ which completes the identifiability of $\beta_0$. Now Theorem \ref{Th1} yields the result.
\end{proof}

In the case of an  integrated  supOU process we have not been able to show the identifiability based on a finite number of moments. Instead we can show an \emph{asymptotical} identifiability.

\begin{definition}
A parameter vector $\beta$ of a stochastic process $Y$ is said to be asymptotically identifiable if the mapping $\beta:W\rightarrow \mathbb{R}^{\mathbb{N}}$ with $\beta \rightarrow \mathbb{E}\big{(}f_k(Y,\beta)\big{)}_{k\in\mathbb{N}}$ has a unique zero at the true parameter $\beta_0$ where $f_k$ is the k-th component of the moment function $f:\mathbb{R}^{\mathbb{N}}\times W \rightarrow \mathbb{R}^{\mathbb{N}}$.
\end{definition}

Now we show that the parameters of our model are asymptotically identifiable from the expectation, the variance and (all lags of the) autocovariance function of either the integrated supOU process or the log returns of a supOU SV model.
\begin{proposition} [Moment function for the {integrated} supOU processes]\label{th:intsupouident}
Let $X$ be  a supOU process as introduced in Section 2.1 and $V = (V_n)_{n\in\N}$ the corresponding {integrated} supOU process and
$$
f_V(V,\beta)=\left(\begin{array}{c}
f_{\mathbb{E}}(V,\beta) \\ f_{\var}(V,\beta) \\ f_1(V,\beta)\\ f_2(V,\beta)\\ \vdots 
\end{array} \right)
$$
where
\begin{align*}
f_\mathbb{E}(V,{\beta}) &= V_1 + \frac{\Delta \mu}{B(\alpha_\pi -1)}  \\ f_{\var}(V,{\beta}) &= V_1^2-\bigg{(}   \frac{\Delta \mu}{B(\alpha_\pi -1)} \bigg{)}^2 +\sigma^2 \frac{(1-B\Delta)^{3-\alpha_\pi}-1-\Delta B (\alpha_\pi-3)}{B^3(\alpha_\pi-1)(\alpha_\pi-2)(\alpha_\pi-3)} \\
f_h(V,{\beta})&= V_1V_{1+h}-\bigg{(}  \frac{\Delta \mu}{B(\alpha_\pi -1)}\bigg{ )}^2 + \frac{\sigma^2(f_{h+1}-2f_{h}+f_{h-1})}{2B^3(\alpha_\pi-1)(\alpha_\pi-2)(\alpha_\pi-3)}\;.
\end{align*}
Then the parameter vector $\beta_0$ is asymptotically identifiable.
\end{proposition}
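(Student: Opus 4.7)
The starting point is to unpack what asymptotic identifiability demands here: the conditions $\E[f_k(V,\beta)]=0$ for all $k\in\N$ amount, after combining the mean equation with the variance and the autocovariance equations, to the assertion that $\beta$ reproduces the stationary mean $\E(V_1)$, the variance $\var(V_1)$, and every autocovariance $\cov(V_1,V_{1+h})$, $h\geq 1$, given in Theorem~\ref{moments}. My task thus reduces to showing that knowledge of the full sequence (mean, variance, and all autocovariances) uniquely pins down $\beta=(\mu,\sigma^2,\alpha_\pi,B)$. The plan is to read off the four parameters one after another from the large-$h$ asymptotics of the autocovariance sequence.

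Concretely, I would write $f_h=(1-B\Delta h)^{3-\alpha_\pi}=(-B\Delta h)^{3-\alpha_\pi}(1-1/(B\Delta h))^{3-\alpha_\pi}$, apply the binomial series for $h\to\infty$, and then invoke the discrete second-difference formula $(h+1)^a-2h^a+(h-1)^a=a(a-1)h^{a-2}+O(h^{a-4})$ on each power of $h$ that appears. Inserting into the formula for $\cov(V_1,V_{1+h})$ and cancelling the factor $(3-\alpha_\pi)(2-\alpha_\pi)$ produced by the leading second-difference term against $(\alpha_\pi-2)(\alpha_\pi-3)$ in the denominator then produces
\[
\cov(V_1,V_{1+h})=K(\sigma^2,B,\alpha_\pi)\,h^{1-\alpha_\pi}\Bigl[1+\tfrac{1-\alpha_\pi}{-B\Delta}\,h^{-1}+O(h^{-2})\Bigr],
\]
with $K(\sigma^2,B,\alpha_\pi)=\sigma^2\Delta^{3-\alpha_\pi}/\bigl(2(-B)^{\alpha_\pi}(\alpha_\pi-1)\bigr)>0$.

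The four parameters can now be extracted sequentially. First, the decay exponent $1-\alpha_\pi$ identifies $\alpha_\pi$ via $\alpha_\pi=1-\lim_{h\to\infty}\log\cov(V_1,V_{1+h})/\log h$. Second, once $\alpha_\pi$ is fixed, the subleading-to-leading correction $(1-\alpha_\pi)/(-B\Delta h)$ is nonzero (since $\alpha_\pi>1$) and therefore uniquely determines $B$ via $\lim_{h\to\infty}h\bigl(\cov(V_1,V_{1+h})/(K h^{1-\alpha_\pi})-1\bigr)=(1-\alpha_\pi)/(-B\Delta)$. Third, the now-known leading coefficient $K$ yields $\sigma^2$. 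Finally, the mean equation $\E(V_1)=-\Delta\mu/(B(\alpha_\pi-1))$ gives $\mu$.

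I expect the main obstacle to be making the expansion rigorous to two orders: the binomial remainder of $(1-1/(B\Delta h))^{3-\alpha_\pi}$ and the Taylor remainder of the second-difference operator both have to be controlled well enough that $h(\cov(V_1,V_{1+h})/(K h^{1-\alpha_\pi})-1)$ converges to the claimed limit. This is careful bookkeeping rather than a deep issue. A potentially cleaner alternative would be to combine the leading asymptotic coefficient, which once $\alpha_\pi$ is fixed determines the quantity $\sigma^2/(-B)^{\alpha_\pi}$, with the explicit formula for $\var(V_1)$, and to show by a monotonicity argument in $B$ that the resulting two-equation system in $(\sigma^2,B)$ has a unique solution; this bypasses the subleading expansion but replaces it with an analytic monotonicity check that, given the form of $\var(V_1)$, is not entirely trivial either.
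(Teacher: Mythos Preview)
Your approach is correct and genuinely different from the paper's in the key step of recovering $B$. Both proofs begin the same way: reduce to identifiability from mean, variance and all autocovariances, then read off $\alpha_\pi$ from the leading power-law exponent of $\cov(V_1,V_{1+h})$ (the paper does this via the ratio $\cov(V_{1+h},V_1)/\cov(V_{1+2h},V_1)\to(1/2)^{1-\alpha_\pi}$, you via $\log\cov/\log h$; these are equivalent). The divergence is in the next step. The paper passes to the \emph{autocorrelation} $\rho(h)$, whose leading asymptotic coefficient $\tilde\rho_{B,\alpha}(h)/h^{1-\alpha}$ depends on $B$ alone once $\alpha_\pi$ is fixed (the $\sigma^2$ cancels), and then invokes a monotonicity argument in $B$ to conclude uniqueness; $\sigma^2$ is afterwards recovered from $\var(V_1)$. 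You instead push the autocovariance expansion one order further and read $B$ directly from the subleading coefficient $(1-\alpha_\pi)/((-B)\Delta)$, then get $\sigma^2$ from the leading constant $K$. Your route trades the paper's monotonicity check (which is asserted rather than verified in detail) for a two-term asymptotic whose bookkeeping is, as you note, routine: writing the second difference of $(1+ch)^a$ as $a(a-1)c^2(1+ch)^{a-2}\bigl[1+O(h^{-2})\bigr]$ and expanding $(1+ch)^{a-2}=(ch)^{a-2}\bigl[1+(a-2)/(ch)+O(h^{-2})\bigr]$ gives exactly your formula. The alternative you sketch at the end---combining the leading autocovariance coefficient with $\var(V_1)$ and arguing monotonicity---is essentially the paper's argument. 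One minor point: like the paper, you should note that the degenerate cases $\alpha_\pi\in\{2,3\}$, where the explicit formulas have removable singularities, need to be handled separately.
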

\begin{proof}
 Taking expectations of $f_V(V,\beta)$ it is again elementary to see that the identifiability is equivalent to showing unique identifiability from the stationary expectation, variance and the  stationary autocovariance function.

Let $\alpha_\pi \neq 2,3$. From \cite{BarndorffNielsen.2011}, Example 3.1 and Prop. 3.5 (i), we get

\begin{align*}
\lim_{h\rightarrow\infty}\frac{\cov(V_{1+h},V_1)}{\cov(V_{1+2h},V_1)} = \lim_{h\rightarrow\infty}\frac{-\frac{\Delta^{3-\alpha_\pi}}{2B(\alpha_\pi - 1)} \int\limits_0^\infty{x^2 \nu(dx)}\ (-Bh)^{1-\alpha_\pi}}{-\frac{\Delta^{3-\alpha_\pi}}{2B(\alpha_\pi - 1)} \int\limits_0^\infty{x^2\nu(dx)}\ (-2Bh)^{1-\alpha_\pi}}= \left(\frac{1}{2}\right)^{1-\alpha_\pi}\;.
\end{align*}
This yields a unique $\alpha_\pi$. Using again \cite{BarndorffNielsen.2011}, Example 3.1 and Prop. 3.5 (i), we also obtain
\begin{align*}
\rho(h) \sim \tilde{\rho}_{B,\alpha}(h) \mathrel{\mathop:}=\frac{\Delta^{3-\alpha} (\alpha-2)(\alpha-3) h^{1-\alpha} (-B)^{3-\alpha}}{2 ((1-B\Delta)^{3-\alpha} -1 -\Delta B(\alpha-3))}\;.
\end{align*}
The derivative of $\tilde{\rho}_{B,\alpha}(h)$ with respect to $B$ is a monotone function in $B$. Hence there can only be one $B<0$ such that $\E [f_V(Y_t,\beta)] = 0$. The uniqueness of $\mu$ and $\sigma^2$ follow from $f_\mathbb{E}(V,{\beta})=0$ and $f_{\var}(V,\beta)=0$, respectively. 
The remaining cases $\alpha_\pi =2$ and $\alpha_\pi =3$ can be treated similarly.
\end{proof}

Due to (\ref{984}) and (\ref{481}) we are able to deduce the moment function for the supOU SV model easily.

\begin{corollary} [Moment function for the supOU stochastic volatility model]
Let $X,$\linebreak $\Sigma$ be  a supOU SV model as introduced in Section 2.3, $Y=(Y_n)_{n\in\mathbb{N}}$ equidistant log returns observed on a grid with size $\Delta>0$ and
$$
f_{SV}(Y,\beta)=\left(\begin{array}{c}
f_{\var}(Y,\beta) \\ f_{\operatorname{var2}}(Y,\beta) \\ f_1(Y,\beta)\\f_2(Y,\beta)\\  \vdots 
\end{array} \right)
$$
where
\begin{align*}
f_{\var}(Y,{\beta}) &= Y_1^2+\frac{\Delta \mu}{B(\alpha_\pi -1)}\\  f_{\operatorname{var2}}(Y,{\beta}) &= Y_1^4+ 3 \sigma^2 \frac{(1-B\Delta)^{3-\alpha_\pi}-1-\Delta B (\alpha_\pi-3)}{B^3(\alpha_\pi-1)(\alpha_\pi-2)(\alpha_\pi-3)}              -3\left(\frac{\Delta \mu}{B(\alpha_\pi -1)}\right)^2, \\
f_h(Y,{\beta}) &= Y^2_1Y^2_{1+h}-\bigg{(}  \frac{\Delta \mu}{B(\alpha_\pi -1)}\bigg{ )}^2 + \frac{\sigma^2(f_{h+1}-2f_{h}+f_{h-1})}{2B^3(\alpha_\pi-1)(\alpha_\pi-2)(\alpha_\pi-3)}.
\end{align*}
Then the parameter vector $\beta_0$ is asymptotically identifiable.
\end{corollary}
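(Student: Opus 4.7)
My plan is to reduce the asymptotic identifiability for the supOU SV model to the asymptotic identifiability for the integrated supOU process, which was established in Proposition \ref{th:intsupouident}. Since Theorem \ref{svmodel} gives explicit identities linking the first four moments and autocovariances of the squared log-returns $Y_n^2$ to the moments of the integrated supOU process $V$, the whole task boils down to verifying that the zero condition $\mathbb{E}[f_{SV}(Y,\beta)]=0$ is equivalent to matching $\mathbb{E}(V_1)$, $\var(V_1)$ and $\cov(V_{h+1},V_1)$ for every $h\geq 1$, and then invoking the earlier proposition.

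First, taking expectations in $f_{SV}$ I would rewrite the three types of equations. The equation $\mathbb{E}[f_{\var}(Y,\beta)]=0$ reads $\mathbb{E}(Y_1^2) = -\Delta\mu/(B(\alpha_\pi-1))$, and by Theorem \ref{svmodel} the left side equals $\mathbb{E}(V_1)$, so this equation is exactly $f_\mathbb{E}(V,\beta)=0$ from Proposition \ref{th:intsupouident}. For the autocovariance block, the identity $\cov(Y_{h+1}^2,Y_1^2)=\cov(V_{h+1},V_1)$ together with $\mathbb{E}(Y_1^2)^2=\mathbb{E}(V_1)^2$ shows that $\mathbb{E}[f_h(Y,\beta)]=0$ collapses (after the $\mathbb{E}(V_1)^2$ terms have been matched via $f_{\var}$) to $\cov(V_{h+1},V_1)$ equaling the expression appearing in $f_h(V,\beta)$ from Proposition \ref{th:intsupouident}.

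The one small computation I actually have to do is to match $\mathbb{E}[f_{\operatorname{var2}}(Y,\beta)]=0$ with the variance equation for $V$. Using $\mathbb{E}(Y_1^4)=\var(Y_1^2)+\mathbb{E}(Y_1^2)^2$ together with \eqref{984}, one obtains $\mathbb{E}(Y_1^4)=3\var(V_1)+3\mathbb{E}(V_1)^2$, which I would then plug into the explicit expressions provided by Theorem \ref{moments}. A direct rearrangement shows that this is equivalent to the equation $\mathbb{E}[f_{\var}(V,\beta)]=0$ from Proposition \ref{th:intsupouident}, once the already-matched mean $\mathbb{E}(V_1)$ is substituted. So altogether, the system of equations coming from $f_{SV}$ is equivalent, parameter by parameter, to the system coming from $f_V$.

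Given this equivalence, asymptotic identifiability of $\beta_0$ for the SV moment function follows directly from Proposition \ref{th:intsupouident}. The main obstacle is really only bookkeeping: one must carefully verify that the constants in $f_{\operatorname{var2}}$ correspond exactly to $3\var(V_1)+3\mathbb{E}(V_1)^2$ rather than to $3\var(V_1)+2\mathbb{E}(V_1)^2$ (the latter being $\var(Y_1^2)$), and that after subtracting off the contributions of $\mathbb{E}(V_1)^2$ the identifying information from the autocovariance block is genuinely that of $\cov(V_{h+1},V_1)$. No new probabilistic argument is required beyond Theorem \ref{svmodel} and the already-established result for $V$.
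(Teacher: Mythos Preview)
Your proposal is correct. The paper's own proof is a single line, ``Analogous to the proof of Proposition \ref{th:intsupouident},'' which invites the reader to rerun the asymptotic argument for $\cov(Y_{h+1}^2,Y_1^2)$ directly; you instead carry out an explicit reduction, using Theorem \ref{svmodel} to show that the system $\mathbb{E}[f_{SV}(Y,\beta)]=0$ is equivalent to the system $\mathbb{E}[f_V(V,\beta)]=0$, and then invoke Proposition \ref{th:intsupouident} as a black box. Since $\cov(Y_{h+1}^2,Y_1^2)=\cov(V_{h+1},V_1)$ and $\mathbb{E}(Y_1^4)=3\var(V_1)+3\mathbb{E}(V_1)^2$, the two routes are essentially the same computation read in two directions; your version has the minor advantage of making precise exactly which identities from Theorem \ref{svmodel} are used and of not repeating the asymptotic analysis, while the paper's one-line version is certainly shorter.
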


\begin{proof} Analogous to the proof of Proposition \ref{th:intsupouident}.
\end{proof}

From Propositions 3.5 and 3.6 one conjectures that for reasonably big  $m$ the model is identifiable and that thus the GMM estimators are consistent (cf. Theorem \ref{Th1}). Unfortunately, it seems very hard to prove this. Thus for large values of $m$ in practice the procedure should give consistent estimators. Actually, it may well be that (non-asymptotic) identifiability comparable to Proposition \ref{th:identsupou} is true, but given the highly involved form of the autocovariance function proving this appears out of reach. In the simulated examples in Section 4 we see that a very moderate high choice of  $m$, which corresponds to the highest used order of the autocovariance function, gives good estimation results. The choice of $m$ is  discussed in Section \ref{sec:m}.

\section{Illustrative examples}

\subsection{Set-up and methodology of the simulation study}\label{sec:m}

In this section we illustrate that the moment estimators are working well concentrating on the supOU process itself and the 
stochastic volatility model. In applications the stochastic volatility model seems the most relevant and the presence of the additional Brownian noise should imply that the estimation is more difficult than for the supOU process itself. 

We assume the semiparametric framework of Section \ref{sec:longmemgam} and that the underlying L\'{e}vy  process $L$ of the supOU process $X$ is a compound Poisson process. 
Actually,  we take a compound Poisson process with rate $0.1$ and $\Gamma(3,20)$-distributed positive jumps for the simulations. The choice of the parameters is motivated by looking at simulated paths and inspecting whether they have a reasonable shape for daily log returns of financial data and its volatility process.

Applying the L\'{e}vy-It\^{o} decomposition to the supOU process $X$ (see e.g. Theorem 2.2 of \cite{BarndorffNielsen.2011}) yields for a general underlying compound Poisson process
\begin{align*}
X_t =\int_{\mathbb{R}}\int_{\mathbb{R}_{-}}\int_{-\infty}^{t} e^{A(t-s)}x\mu(dx,dA,ds)
\end{align*}
where $\mu$ is a Poisson random measure. From that representation it follows that $X$ can be written as
\begin{align}
X_{t} = \sum_{i\geq1,\ \tau_{i}\leq t\ }{e^{A_{i}(t - \tau_{i})}U_{i}}
				+ \sum_{i=1\ }^{\infty}{e^{A_{-i}(t + \tau_{-i})}U_{-i}}\;, \label{eq:sumrep}
\end{align}
where
\begin{align*}
	\tau_{i} \mathrel{\mathop:}= \sum_{j = 1}^{i}{T_{j}} \textrm{ and }
	\tau_{-i} \mathrel{\mathop:}= \sum_{j = 1}^{i}{T_{-j}}\qquad \forall\ i \in \mathbb{N}\;,
\end{align*}
and $(T_{i})_{i \in \mathbb{Z}\backslash \{0\}}$, $(U_{i})_{i \in \mathbb{Z}\backslash \{0\}}$ and $(A_{i})_{i \in \mathbb{Z}\backslash \{0\}}$ are independent sequences of iid distributed random variables with $T_i\sim \exp(\nu(\mathbb{R}))$, $U_{i}\sim \frac{1}{\nu(\mathbb{R})}\nu$ and $A_{i}\sim \pi$. 
In the concrete specification used in our simulated examples we have $T_i\sim \exp(0.1)$, $U_{i}\sim \Gamma(3,20)$ and $A_{i}\sim -B\Gamma(\alpha_\pi,1)$.

Now we are able to simulate the introduced stochastic processes easily and to illustrate our moment estimators. In the following we simulate the processes 1000 times (independently) with 10000 observations (on a unit grid) in each run. 
 Clearly the infinite sum in \eqref{eq:sumrep} can only be obtained approximately. We decided to ignore all jumps before time $-2000$, i.e. in the end we simulate the L\'evy basis exactly on $\bbr^-\times [-2000,10000]$ and set it to zero outside this area. This allows us to simulate the volatility process, i.e. the supOU process, (up to the truncation error of the infinite sum) exactly on $[0,10000]$. These exact values were then used in a standard Euler scheme with grid size $1/20$ to simulate the log returns in the stochastic volatility model over unit intervals.

Afterwards we calculate the GMM estimators for each of the 1000 observed paths, i.e. we solve the optimization problem (\ref{mini}) separately for each path. We estimate the parameters both based on the values of the supOU process itself as well as of the log-returns -- both observed on a unit grid. To weight the moments appropriately we use the 2-step iterated GMM Estimation as described in \cite{Hall.2005}, Section 3.6. This means that the weighting matrix $I$ equals the identity matrix in the first step and in the second step the weighting matrix $I$ is an approximation of $S^{-1}$ where
\begin{align}
S = \lim_{n\rightarrow \infty} \var\bigg{(} \frac{1}{\sqrt{n}}\sum_{t=1}^{n} f(X^{(m)}_t, \beta_1)\bigg{)}
\label{331}
 \end{align}
and $\beta_1$ is the estimation result of the first step. Actually, we take the very simple estimator
\begin{align*}
\hat{S} := \frac{1}{n} \sum_{t=1}^{n}f(X^{(m)}_t,\beta_1) f(X^{(m)}_t,\beta_1)'  
\end{align*}
which performs quite well in our studies, but could in principle be improved by using estimators taking autocorrelation effects into account. 

In the estimation of the supOU SV model based on the log returns we used the mean, the second moment and the first five lags of the ``autocovariance'' function (actually of $h\mapsto E(Y_t^2Y_{t+h}^2)$) of the squared logreturns obtained in the simulation. The choice of ``$m$'' is an intricate issue. We want to estimate four parameters. In order to have a proper overidentified system for GMM, we thus need more than four moment conditions. Actually, we have $2+m$ conditions which requires $m\geq3$. So five is very close to the minimum and it is clear that $m$ should be somewhat bigger than 3 to have a ``more overidentified'' system. On the other hand the dimensionality and thus the computational efforts increase with $m$. Moreover, it is folklore for GMM estimators that the finite sample properties get  bad when using too many moment conditions. In our simulation studies it turned out that once in a while $\hat S$ is singular up to numerical precision for $m$ larger than $5$. So we decided to fix $m=5$ and refrain from studying in detail the estimators for other $m$.
Likewise, we used the mean, the variance and the first five lags of the acf when using the observations of the supOU process itself.

In the illustrations below we see that this 2-step iterated GMM estimation works well. Throughout the illustrations we concentrate on two different cases. In the first case we assume a parameter vector $\beta_0=(\mu_0,\sigma^2_0,\alpha_{\pi,0},B_0)=(0.015,0.003,4,\allowbreak-0.1)$ to cover the case of short memory effects and in the second one we assume a parameter vector $\beta_0=(\mu_0,\sigma^2_0,\alpha_{\pi,0},B_0)=(0.015,0.003,1.95,-0.1)$ to cover the case of long memory effects. 

All simulations and estimations have been carried out using R (\cite{Rsoftware}). For the estimations we used the routine \texttt{optim} with the BFGS algorithm after a straightforward variable transformation to obtain an unconstrained optimization problem. The initial values for the first step estimation were taken randomly from a neighbourhood of the true parameters. In the second estimation step the starting values of the optimisation were taken as the estimates of the first step, unless the optimisation algorithm in the first step ended without proper convergence or the obtained estimates were rather far off. In these  cases the initial values for the second step were again taken randomly from a neighbourhood of the true parameters in order to avoid ending up in a far off local minimum. In our simulations  we encountered that non-convergence of the optimisation routine  in the first step is  not uncommon. In the second step non-convergence of the optimisation routine happened almost never, i.e. in at most 6 (out of 1000) cases when using all 10000 observations and in at most 16 (out of 1000) cases when using only 1000 observations. This shows that although the parameter estimators from the first step could be bad, the resulting estimate for the weighting matrix is still  good enough to give a much better behaved optimisation problem in the second step and that using a good weighting matrix is very important. In the upcoming plots of the results of our simulation study the paths where the two step GMM estimator did not converge in the second optimisation step were excluded.

\subsection{Results of the simulation study}
\subsubsection{SupOU processes}

In Figure \ref{figsupOU10K} we see the estimation results for a supOU process using mean,  variance and lags $1,2,4,5$  of the acf using 1000 independent paths with 10000 observations each. The upper set of plots shows the short memory case and the lower set of plots shows the long memory case where the thick line indicates the true parameter values in all histograms. Likewise, Figure \ref{figsupOU10Kqq} shows normal QQ-plots of the obtained parameter estimators. Obviously the estimation procedure works very well and the estimates are in most cases rather evenly distributed around the true parameter values, although one can also spot some mild skewness in several plots. In the histograms for $\alpha_\pi$ in the short memory case and for $\mu$ in the long memory case we see a mild bias. Actually, when ``zooming into the histograms'' one can see some small bias in most parameter estimates. It is noteworthy that the estimator for $\alpha_\pi$ tends to be too low in the short memory case, but in most cases it stays above the ``long memory threshold'' $2$.

The QQ-Plots for the estimators of $\mu$ and $\sigma^2$ are very remarkable. They indicate clearly an asymptotic normality of the estimators -- both in the short and (maybe surprisingly) long memory case. The tails of the estimators for $\alpha_\pi$ and $B$ are systemically deviating from the ones of the normal distribution. But again one does not see a real difference between the short and long memory case. Moreover, the deviation is rather small so that asymptotically a normal distribution (or another one not really far from it) may still be valid.

\begin{figure}[p]
\center
\includegraphics[height=0.49\textwidth,angle=270]{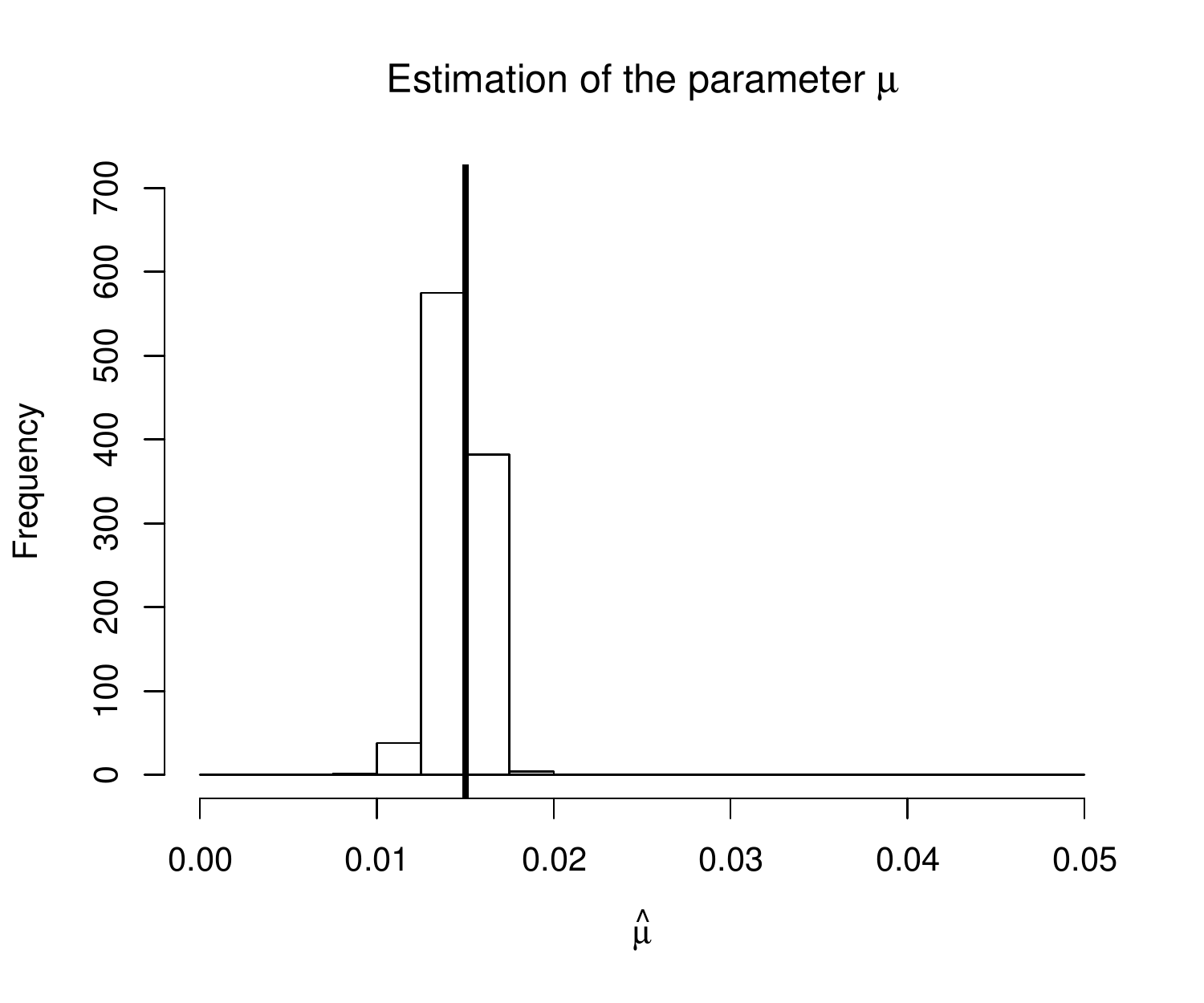}
\includegraphics[height=0.49\textwidth,angle=270]{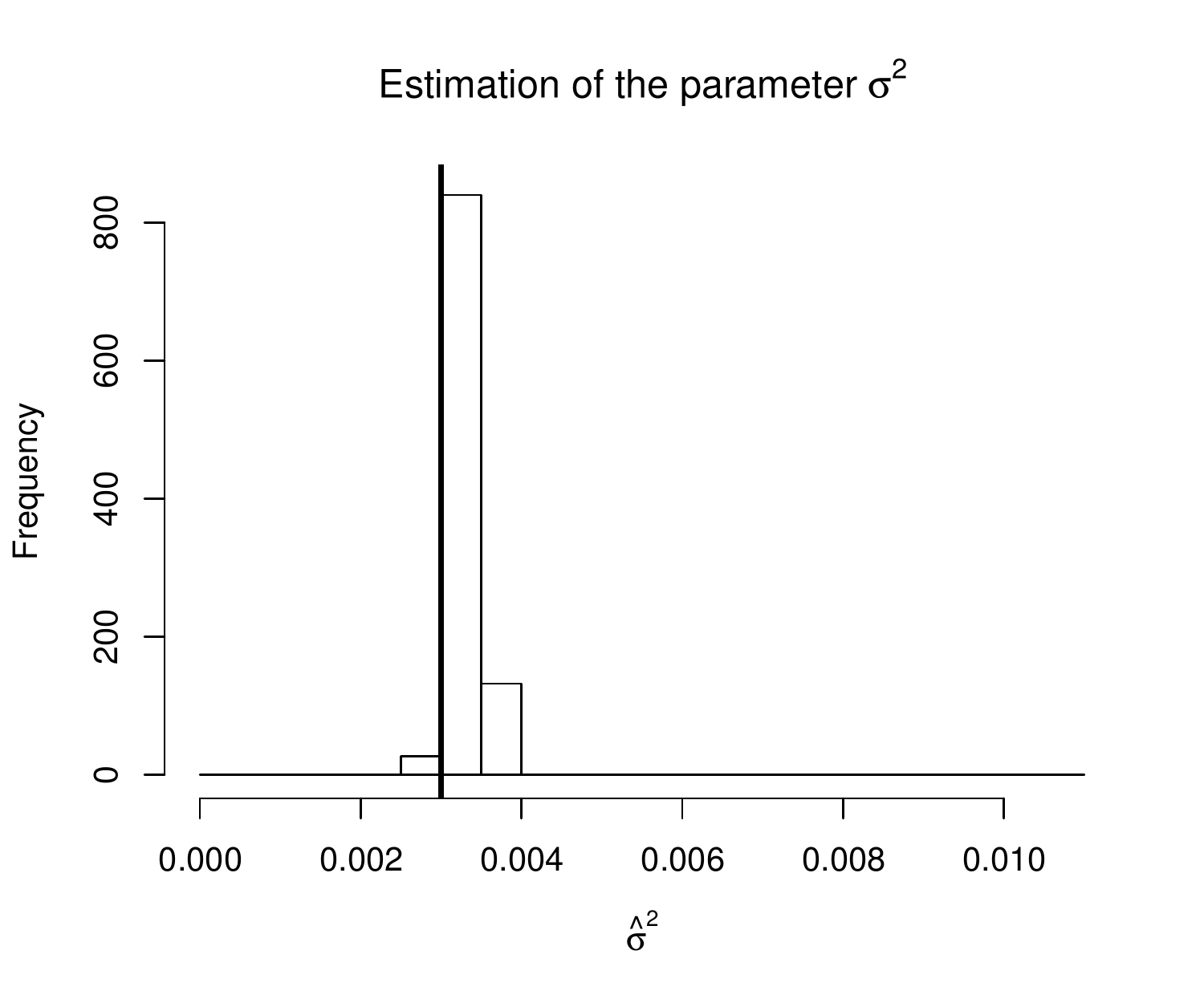}\\
\includegraphics[height=0.49\textwidth,angle=270]{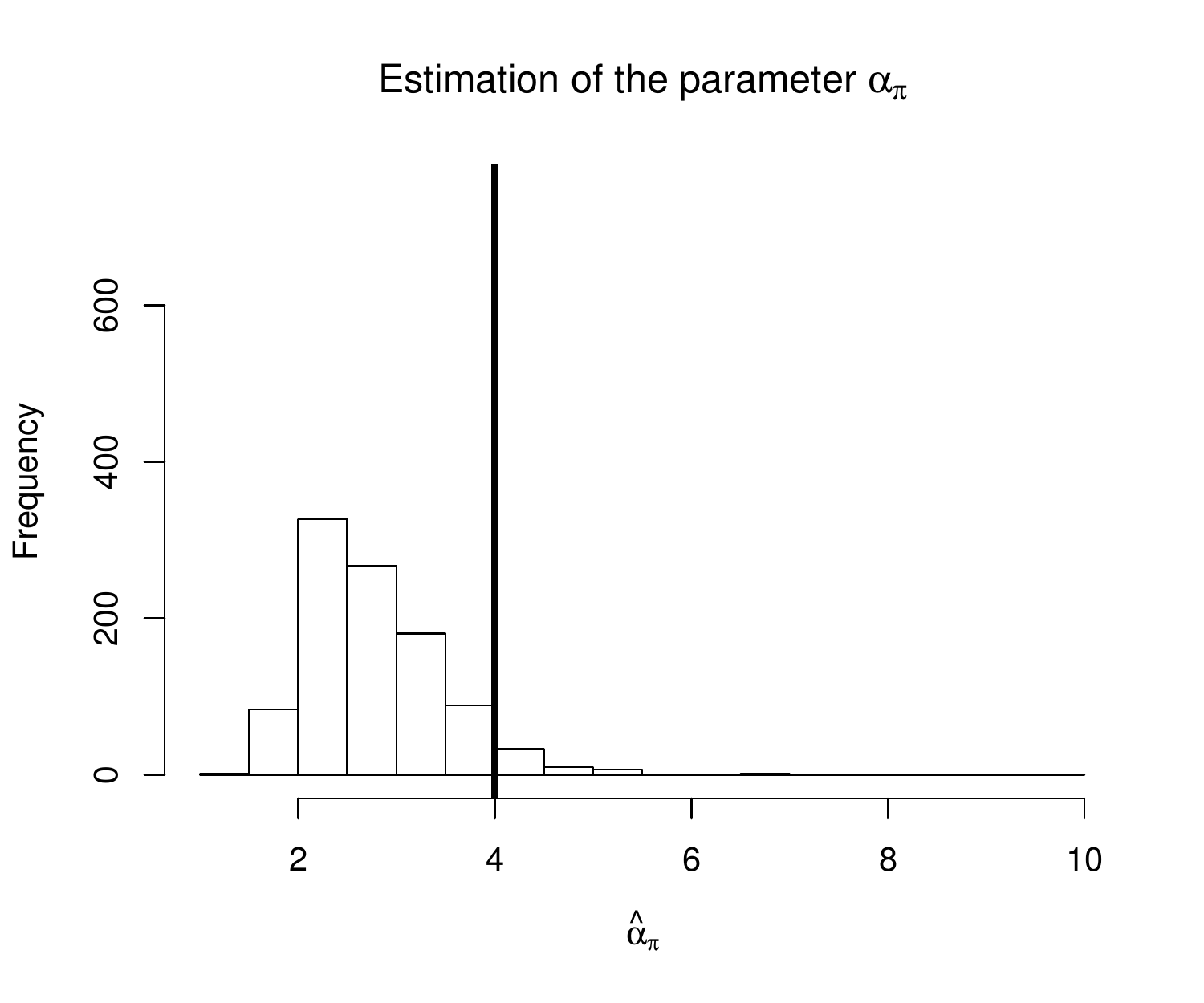}
\includegraphics[height=0.49\textwidth,angle=270]{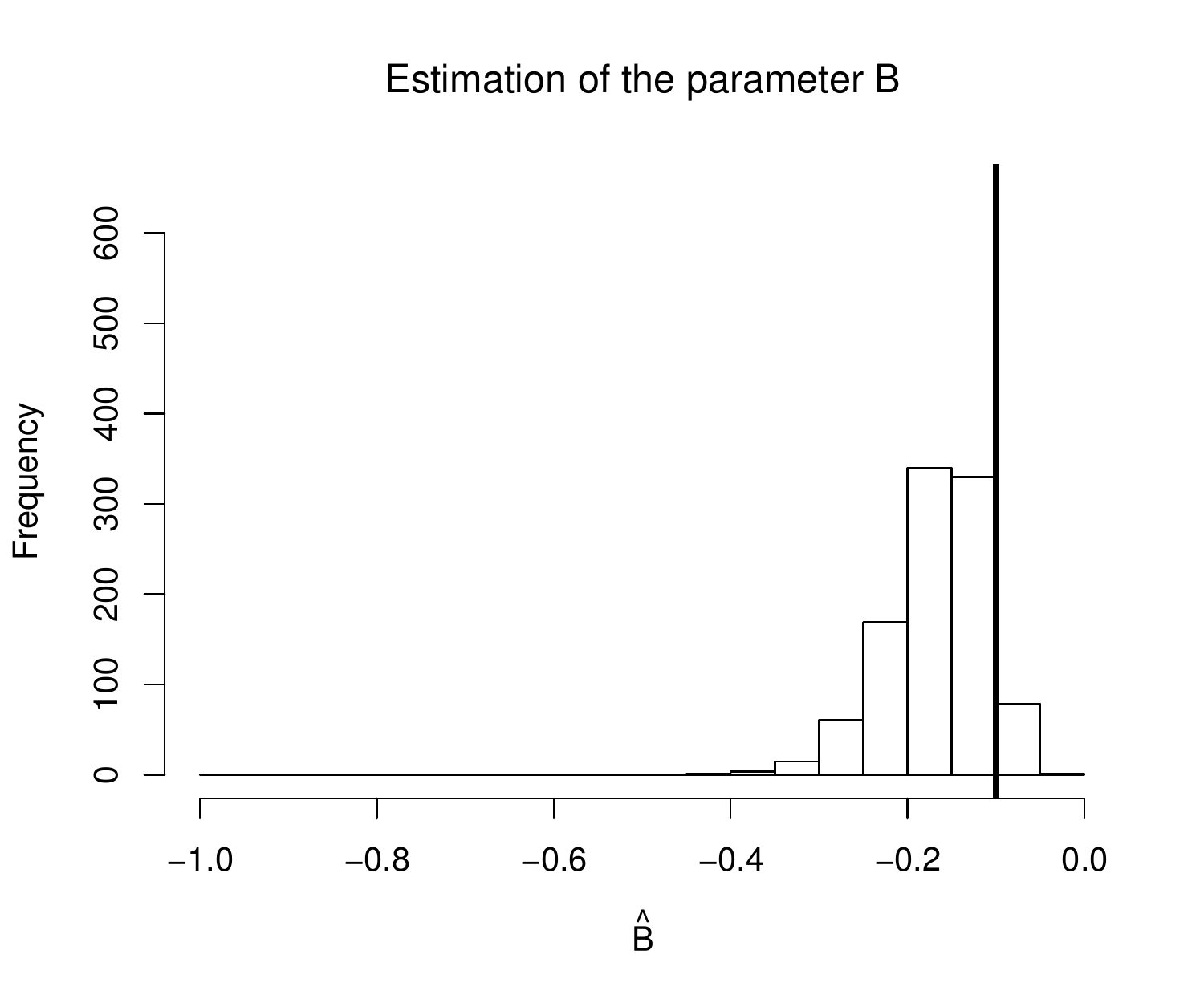}\\[5mm]
\includegraphics[height=0.49\textwidth,angle=270]{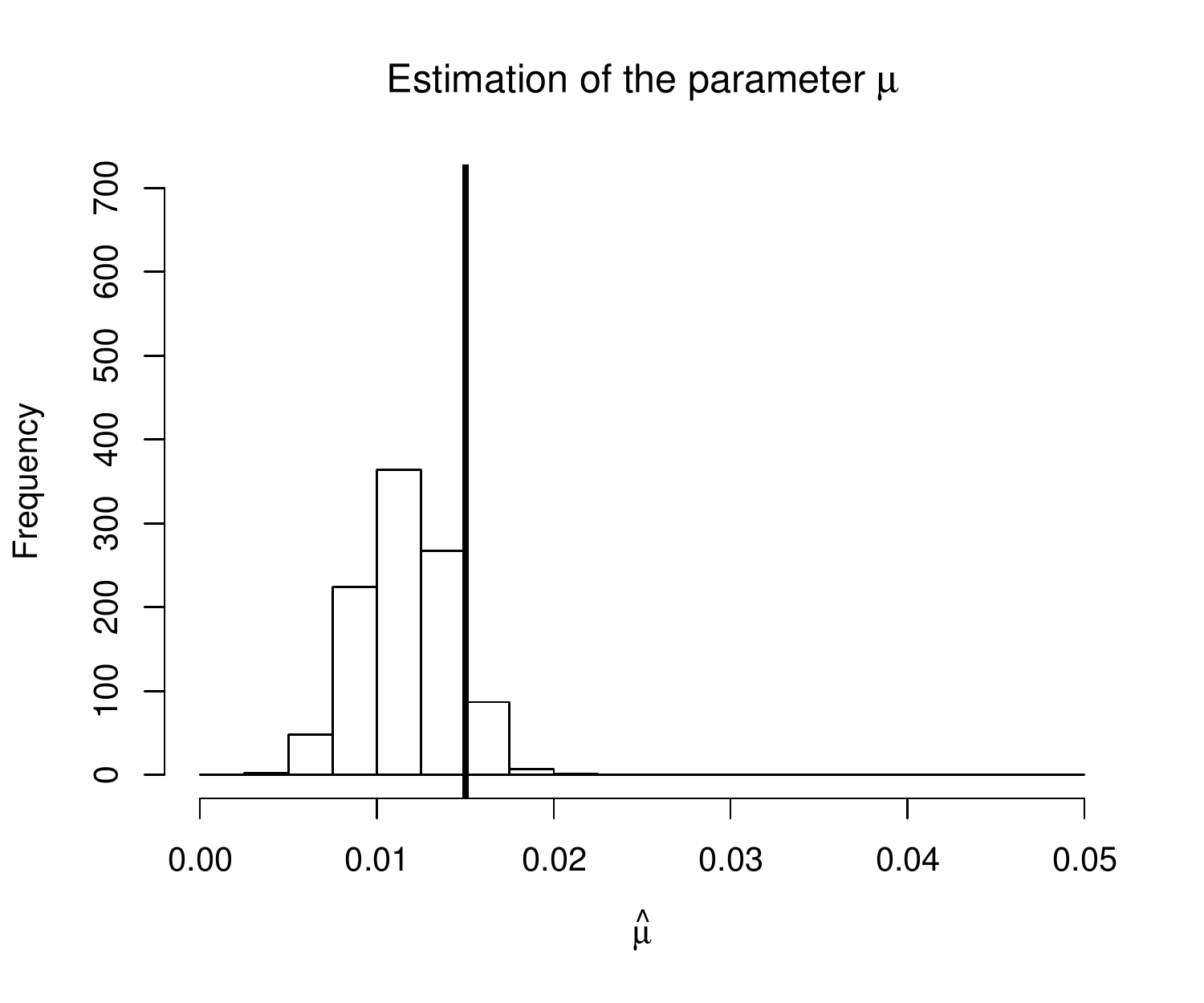}
\includegraphics[height=0.49\textwidth,angle=270]{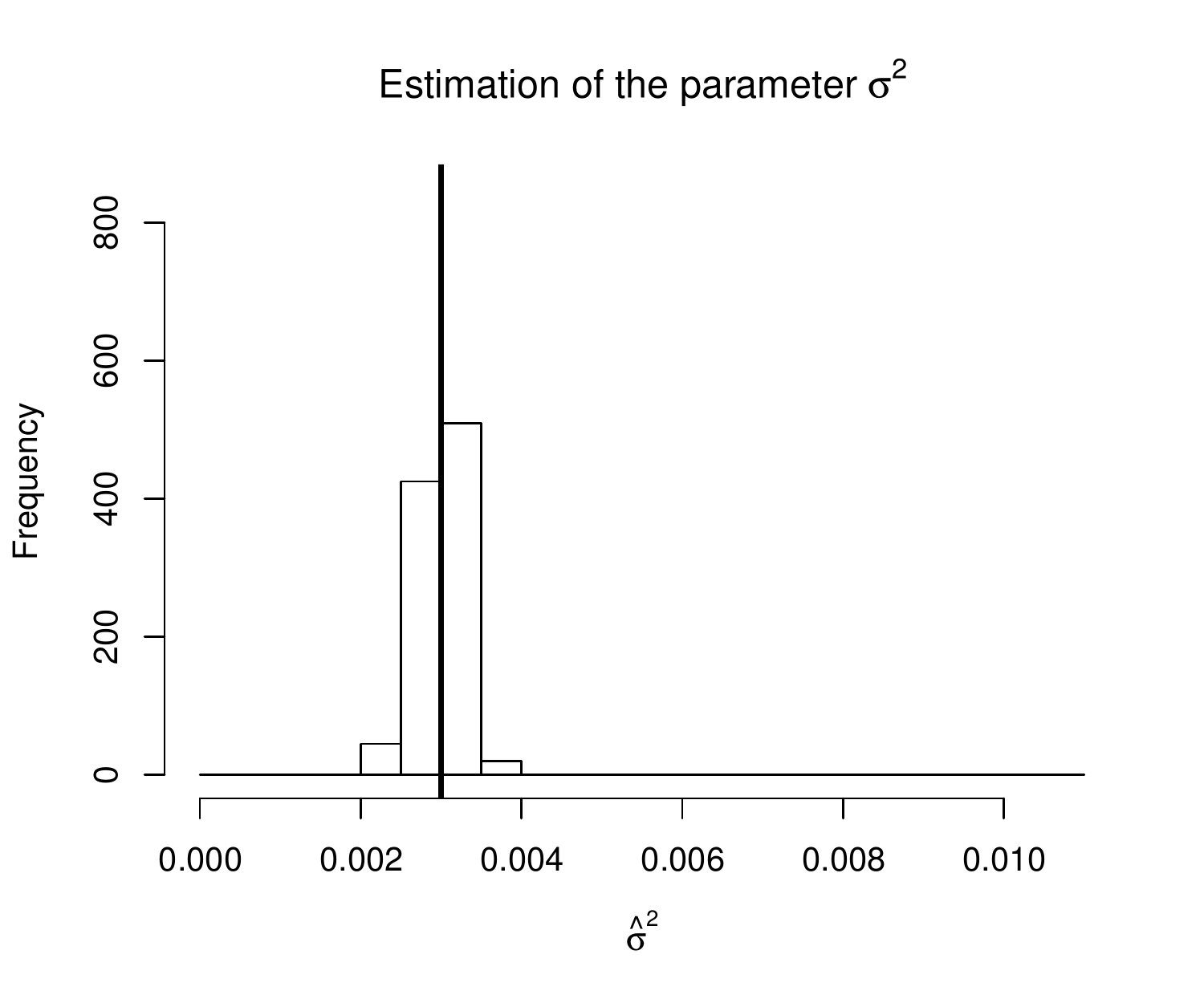}\\
\includegraphics[height=0.49\textwidth,angle=270]{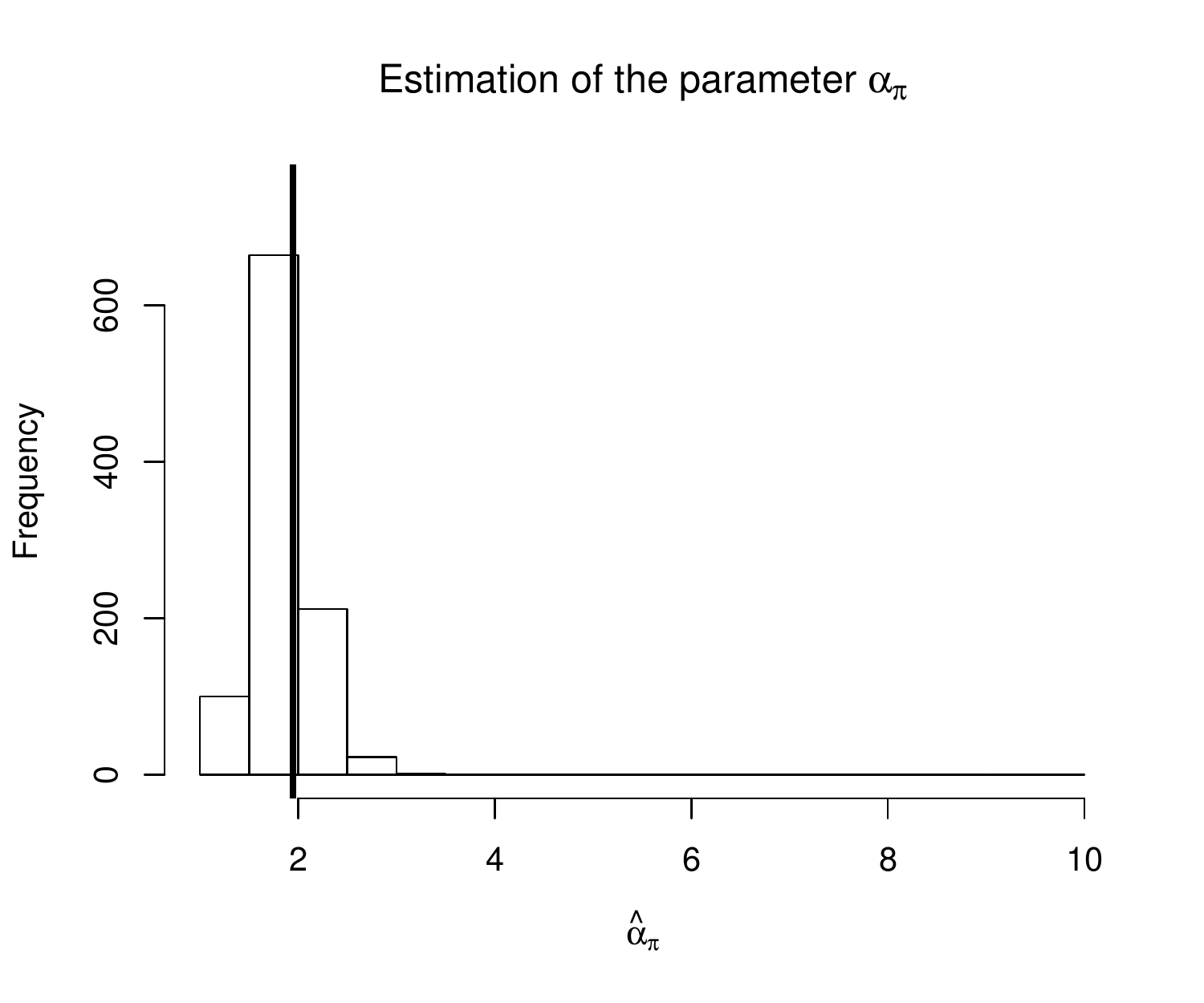}
\includegraphics[height=0.49\textwidth,angle=270]{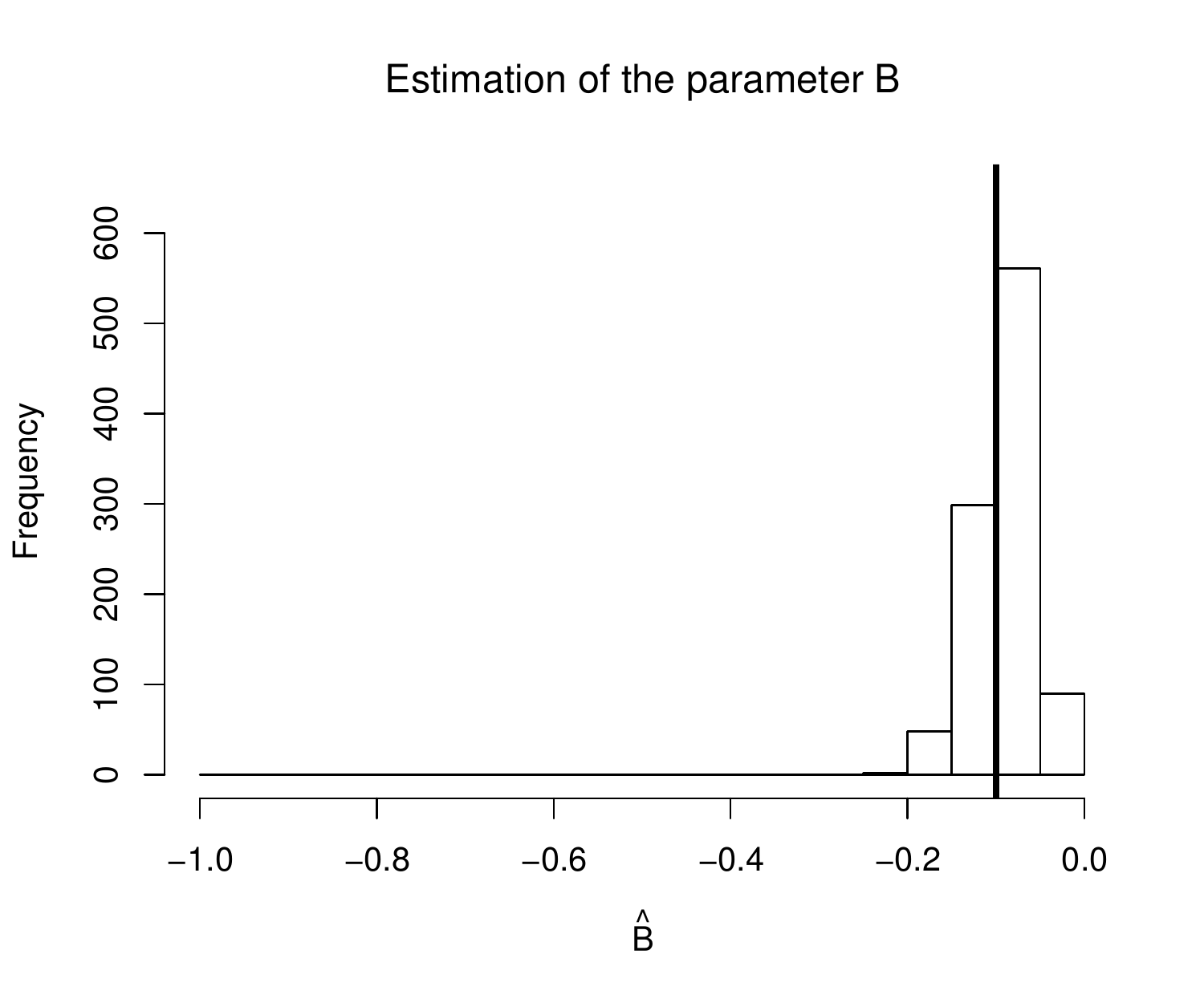}
\caption{Histograms of parameter estimates of 1000 paths of length 10000  of a supOU process with short (upper set of plots) and with long memory (lower set of plots). The true values are indicated by black lines.}
\label{figsupOU10K}
\end{figure}

\begin{figure}[p]
\center
\includegraphics[height=0.49\textwidth,angle=270]{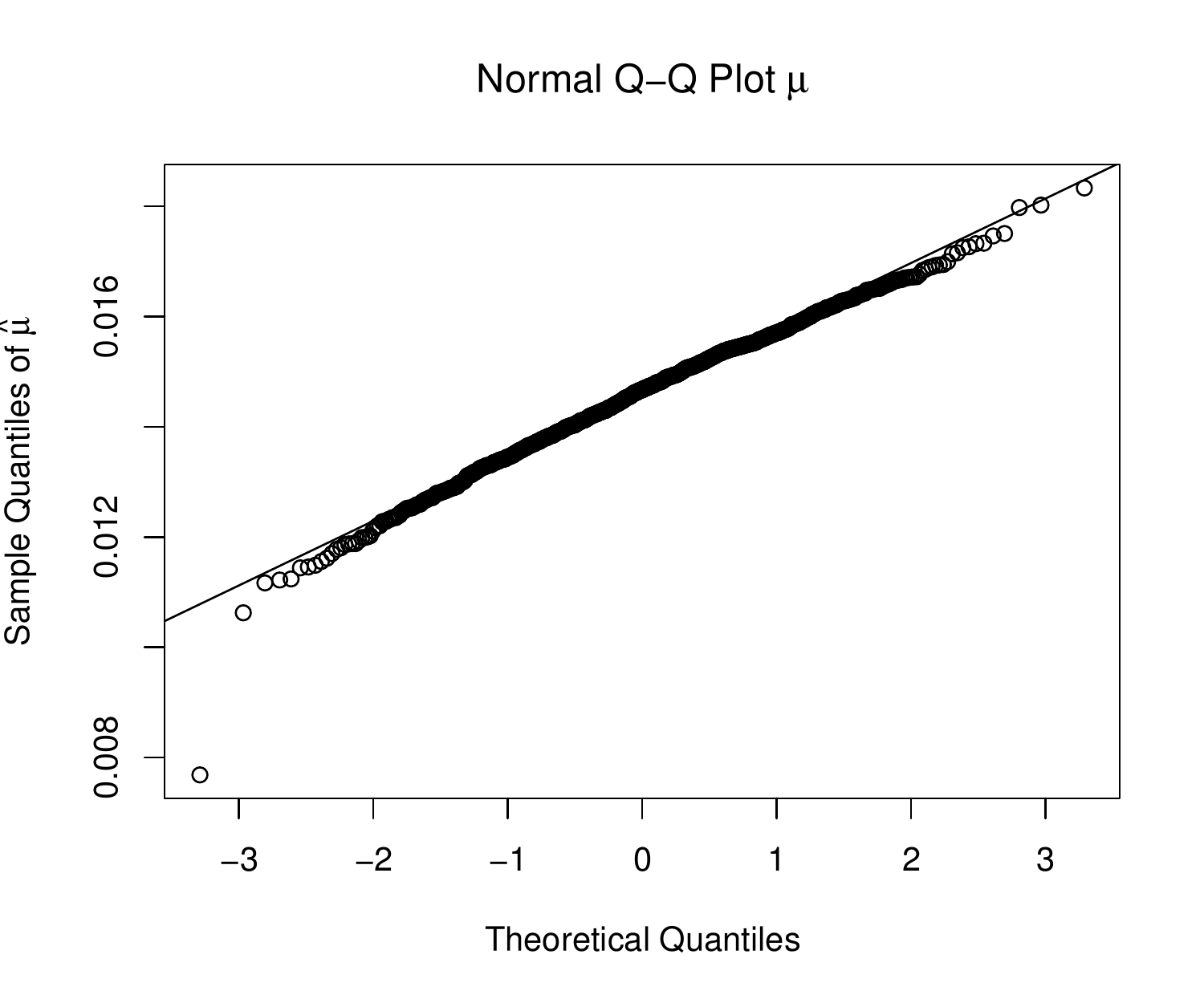}
\includegraphics[height=0.49\textwidth,angle=270]{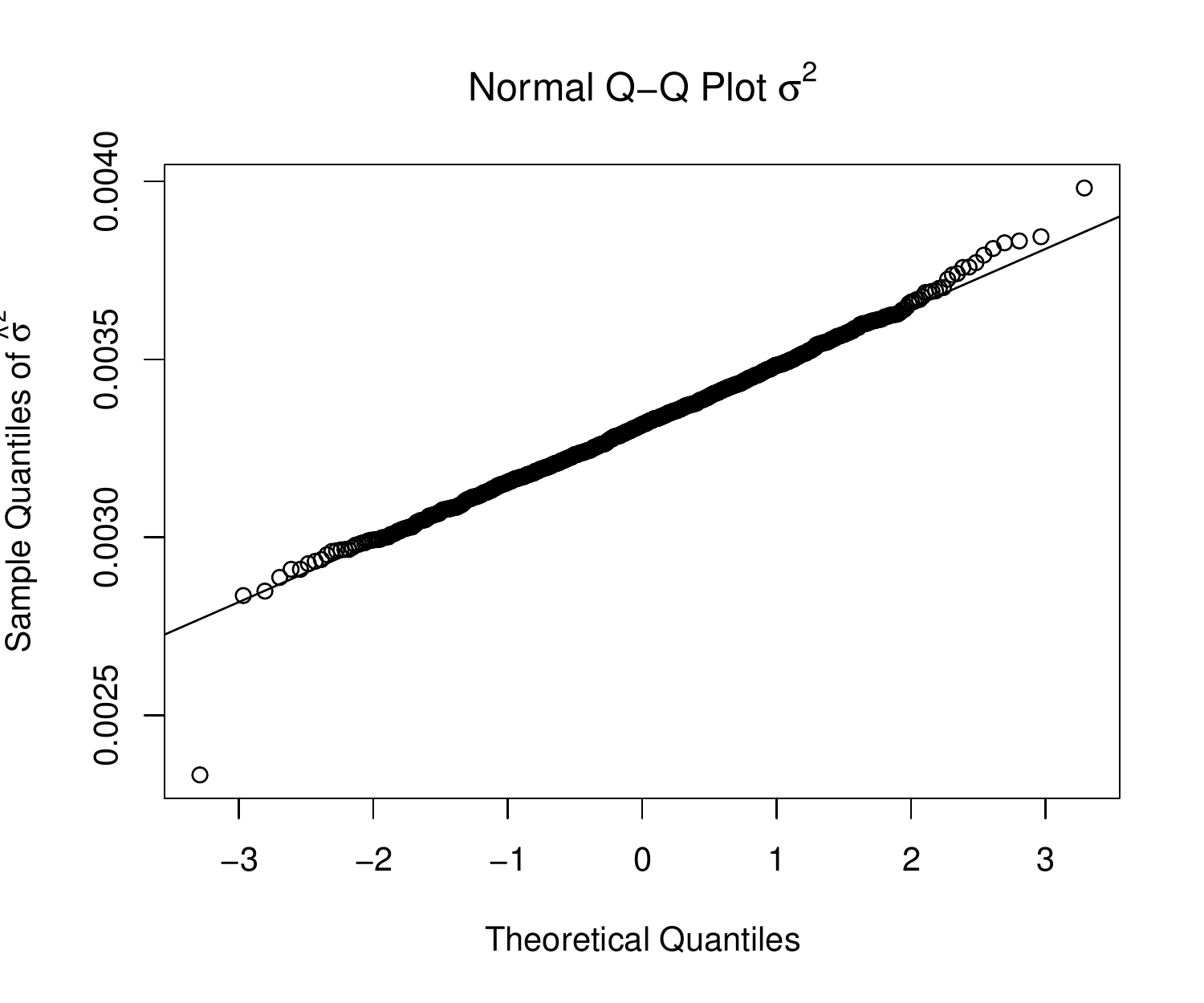}\\
\includegraphics[height=0.49\textwidth,angle=270]{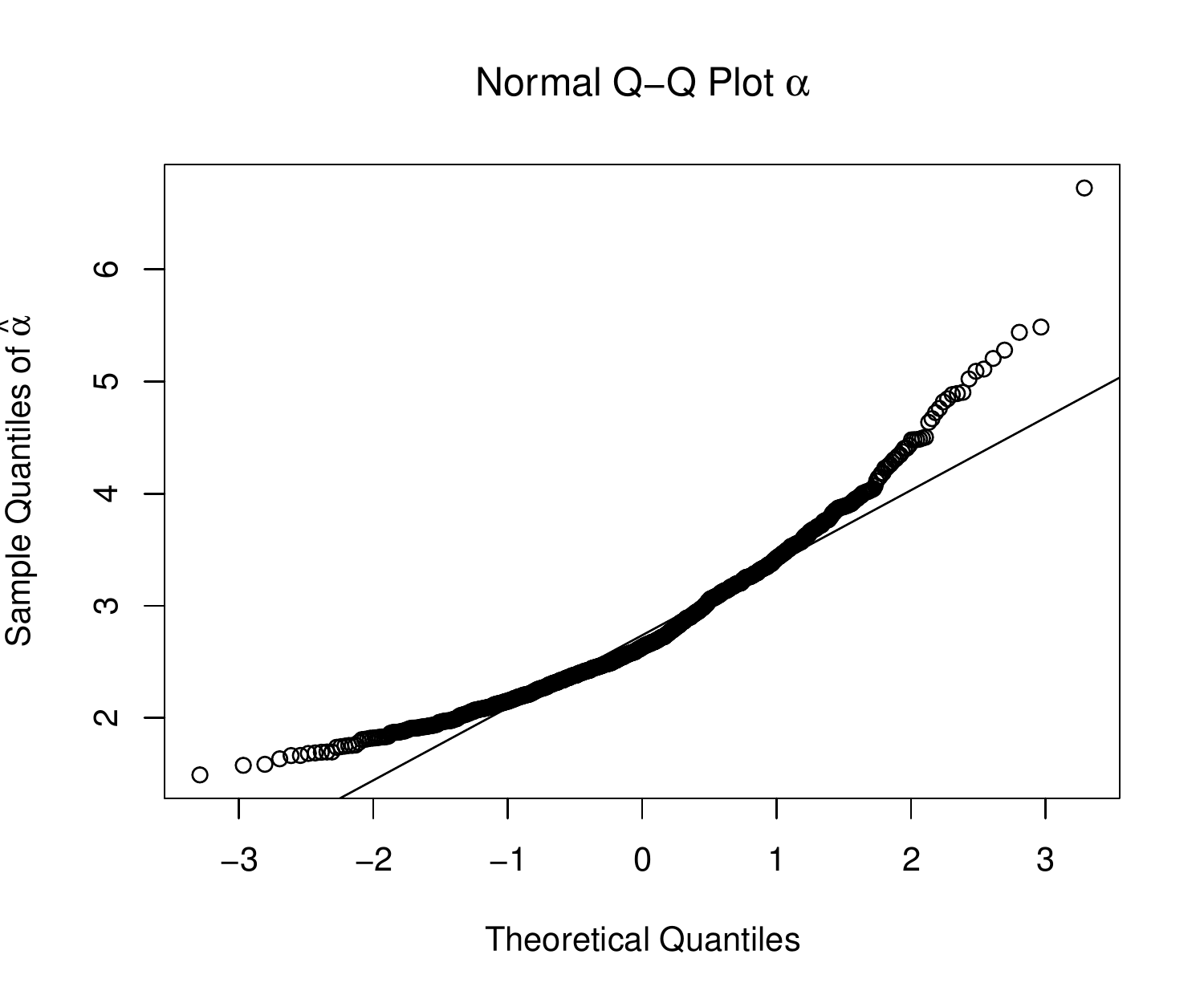}
\includegraphics[height=0.49\textwidth,angle=270]{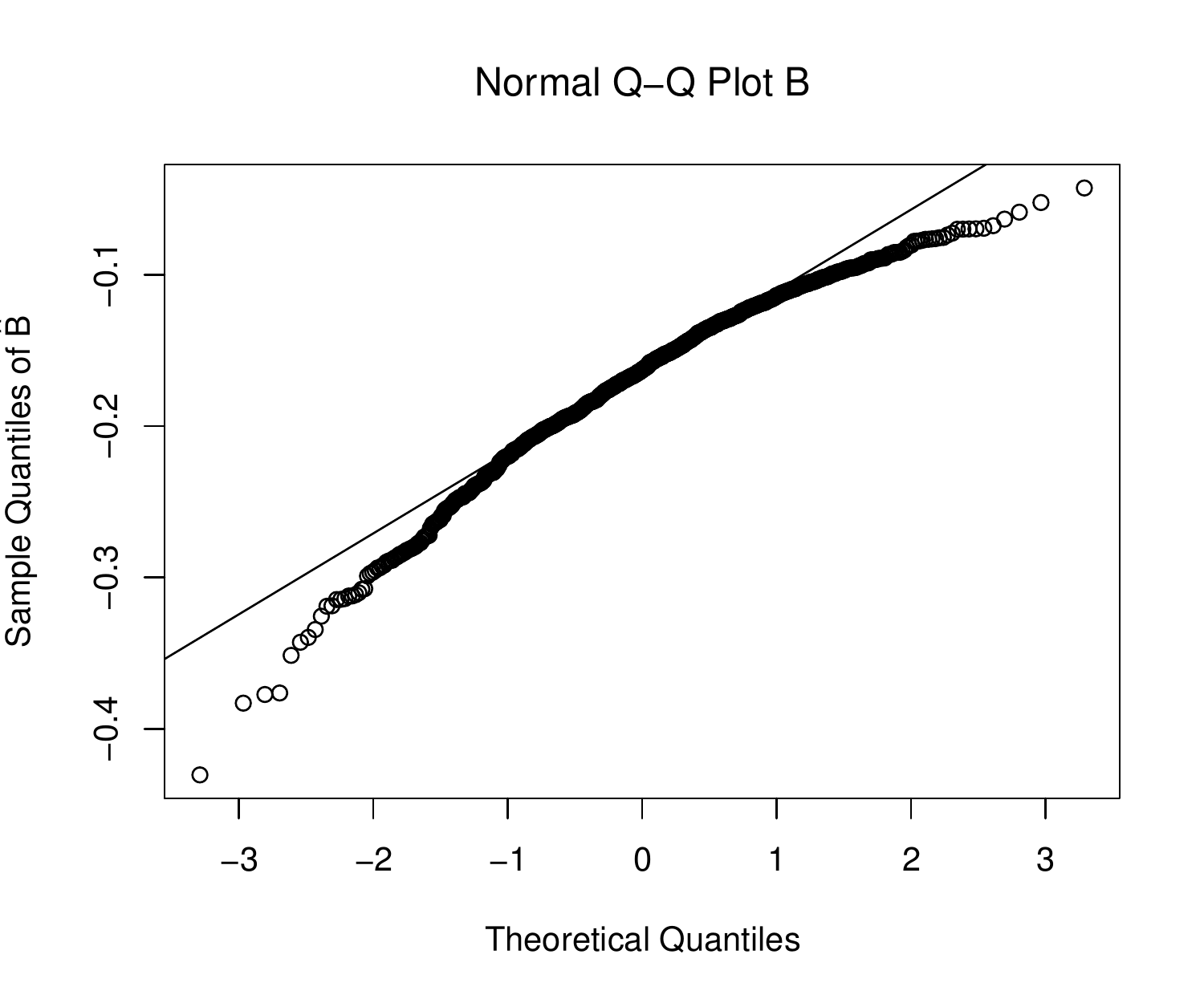}\\[5mm]
\includegraphics[height=0.49\textwidth,angle=270]{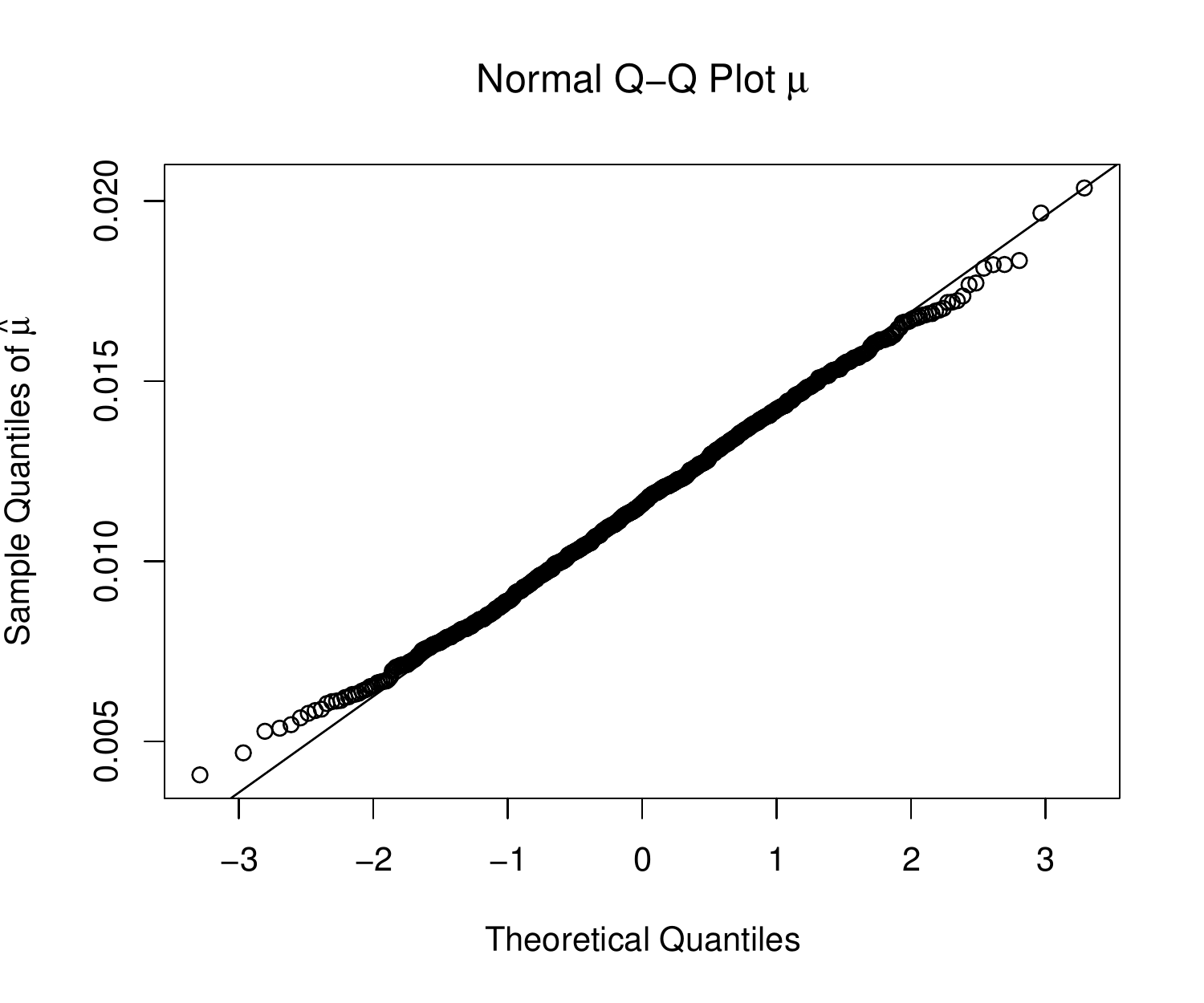}
\includegraphics[height=0.49\textwidth,angle=270]{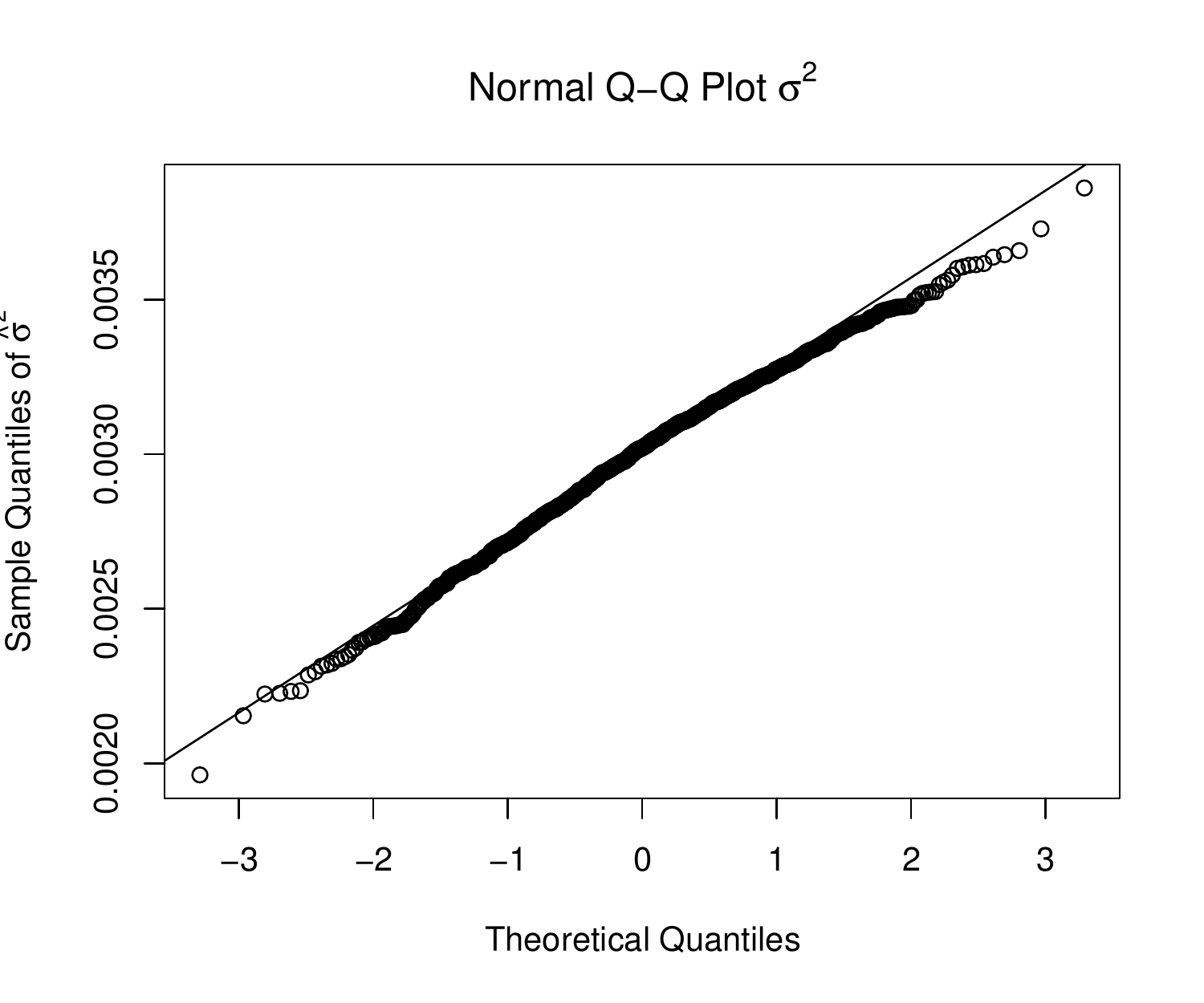}\\
\includegraphics[height=0.49\textwidth,angle=270]{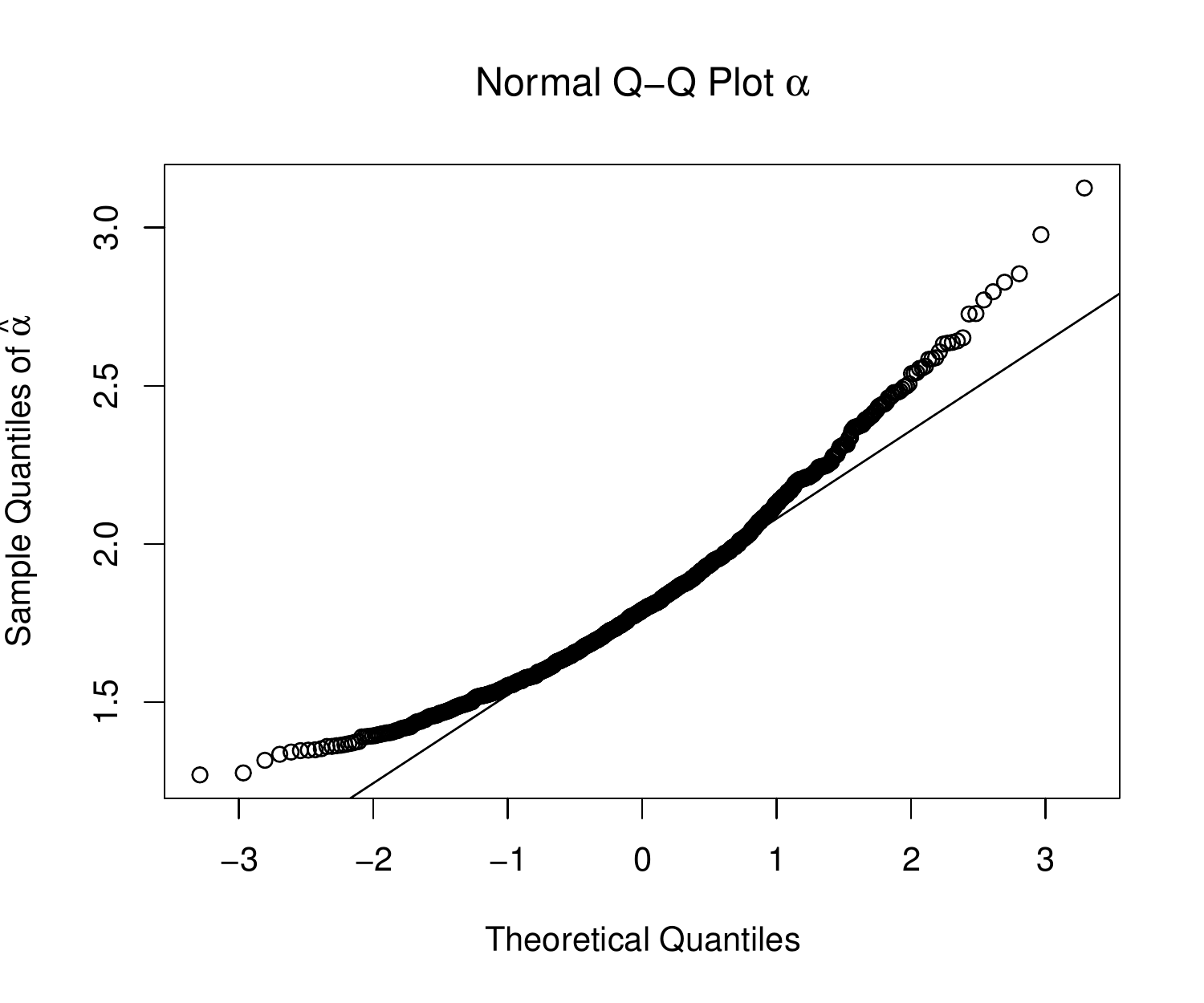}
\includegraphics[height=0.49\textwidth,angle=270]{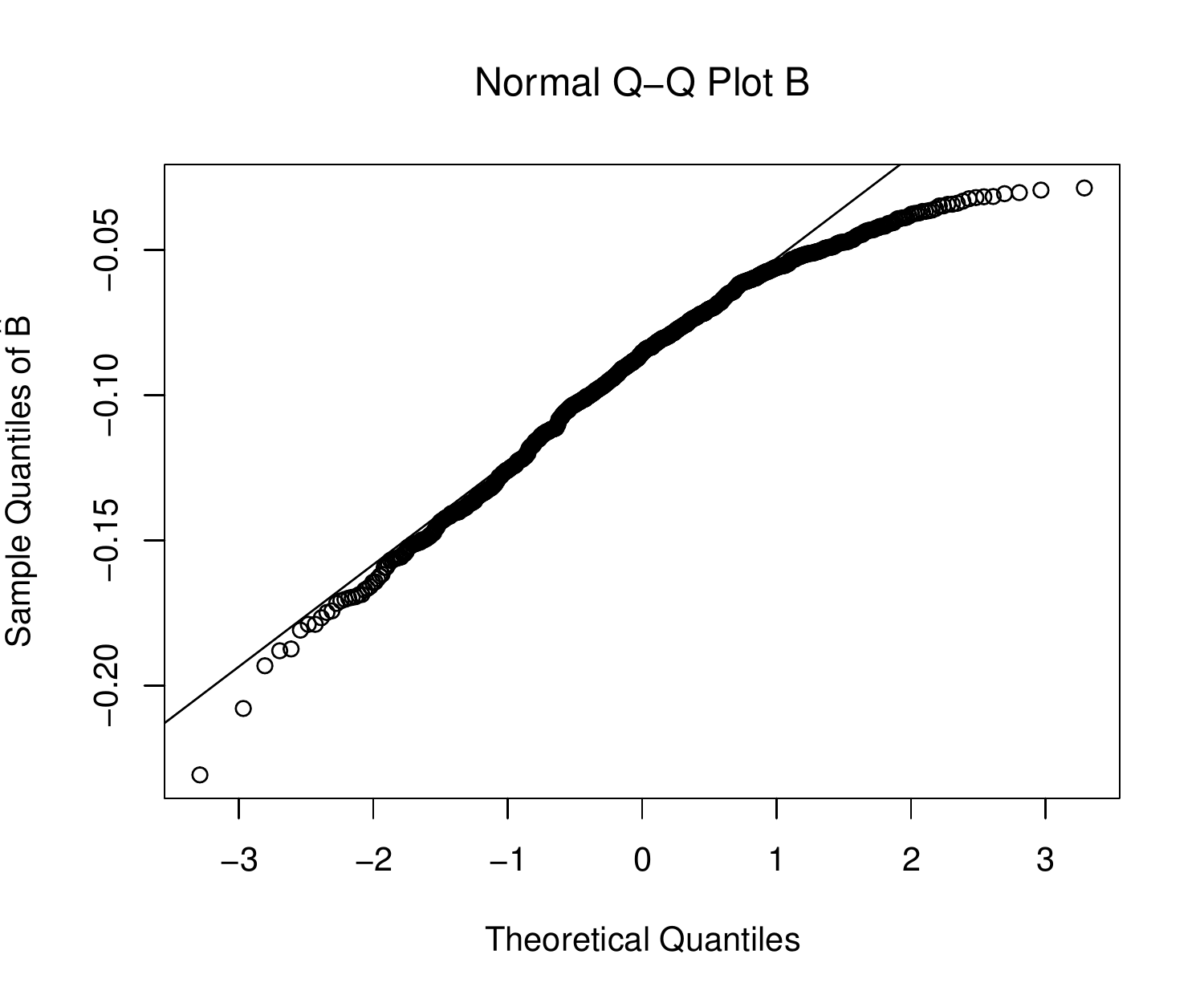}
\caption{Normal QQ-Plots of parameter estimates of 1000 paths of length 10000  of a supOU process with short (upper set of plots) and with long memory (lower set of plots).}
\label{figsupOU10Kqq}
\end{figure}

\begin{figure}[p]
\center
\includegraphics[height=0.49\textwidth,angle=270]{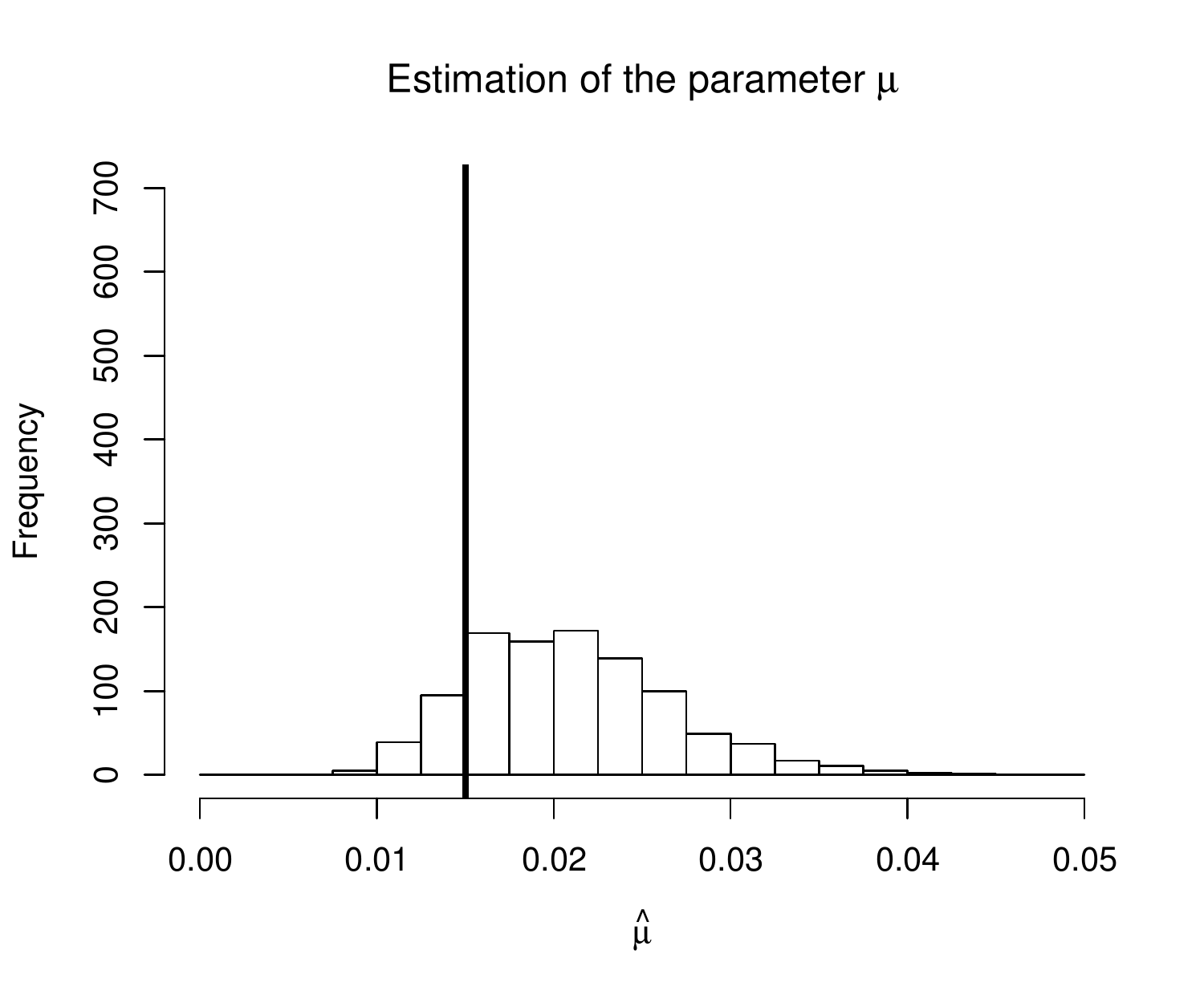}
\includegraphics[height=0.49\textwidth,angle=270]{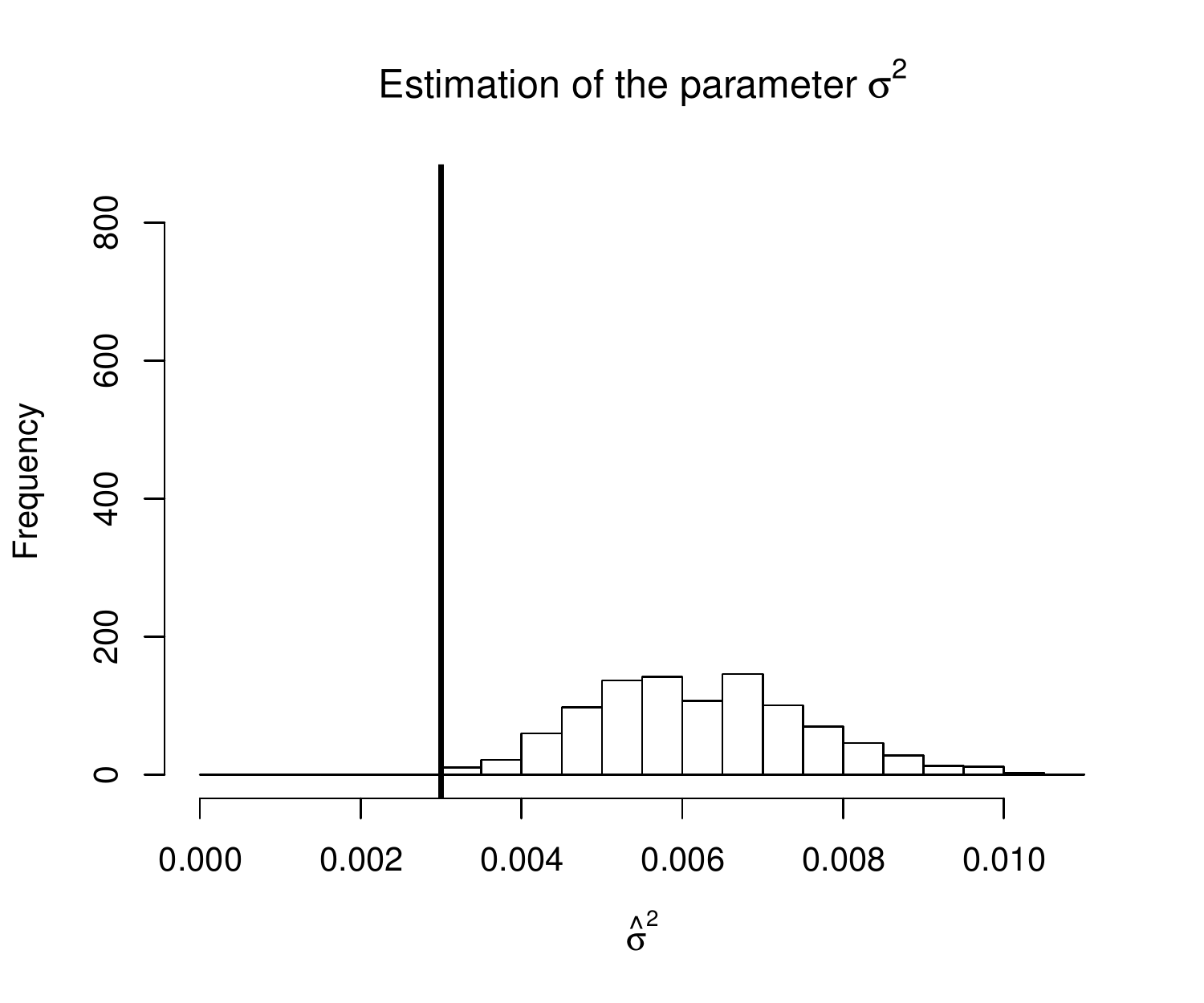}\\
\includegraphics[height=0.49\textwidth,angle=270]{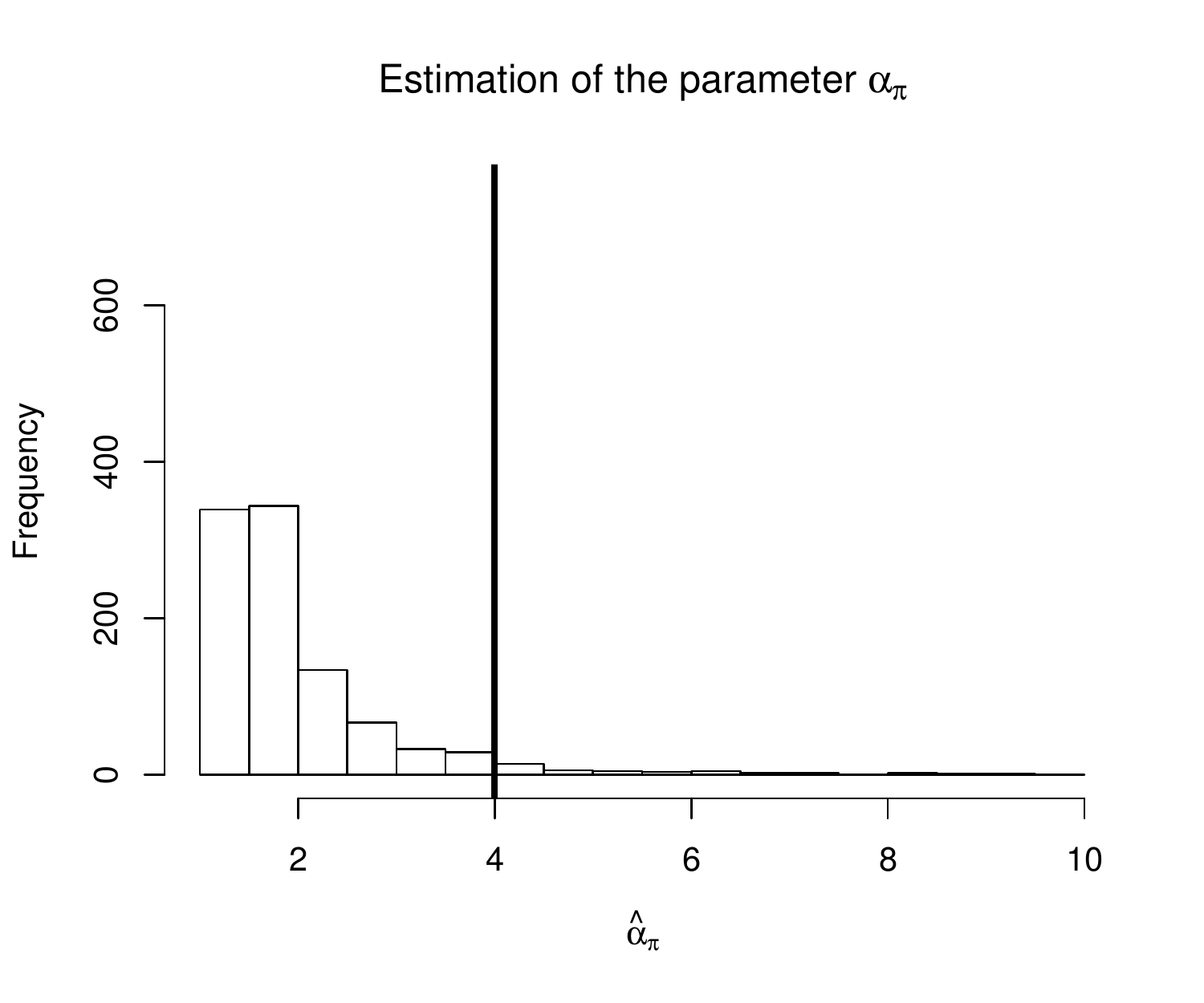}
\includegraphics[height=0.49\textwidth,angle=270]{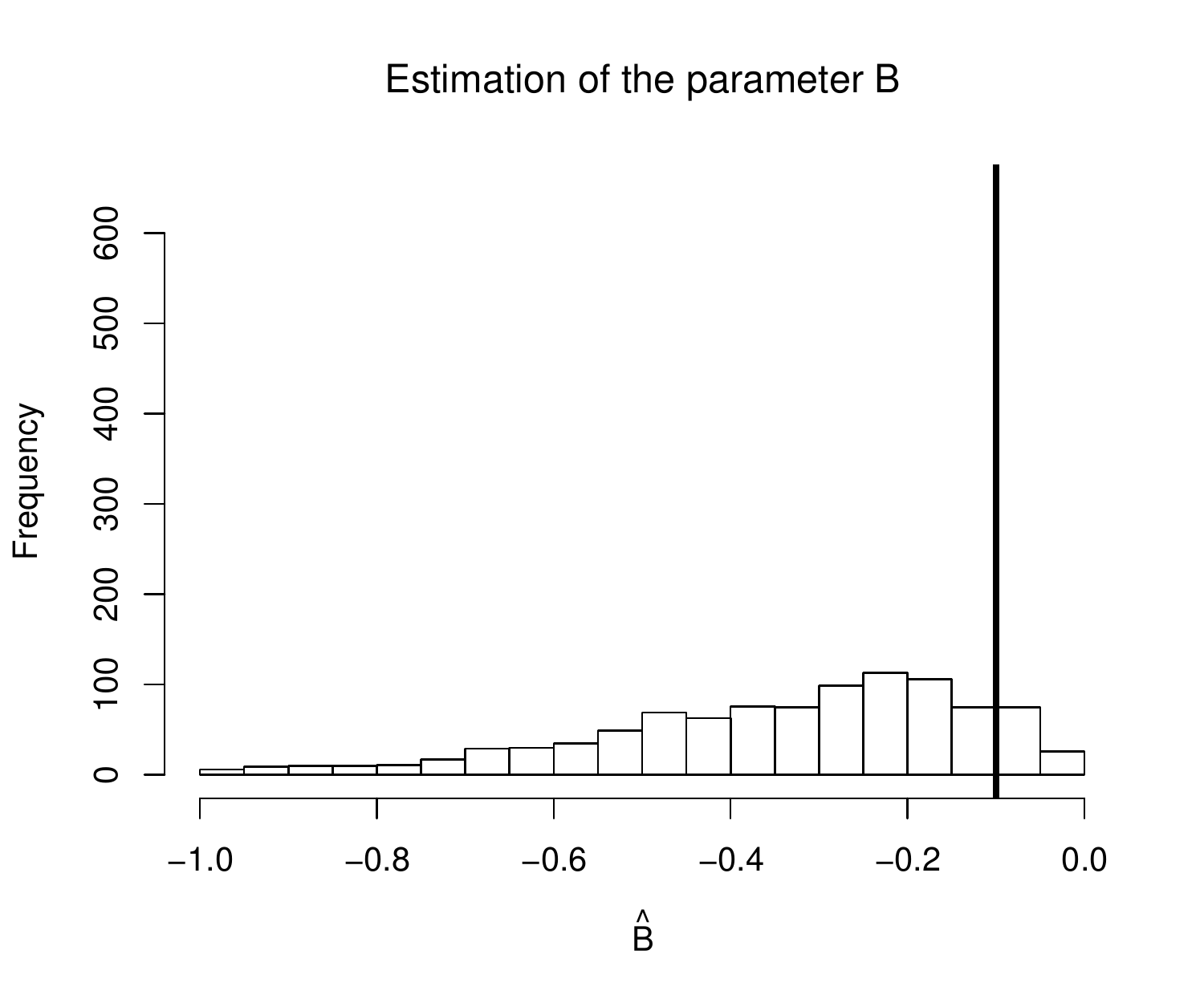}\\[5mm]
\includegraphics[height=0.49\textwidth,angle=270]{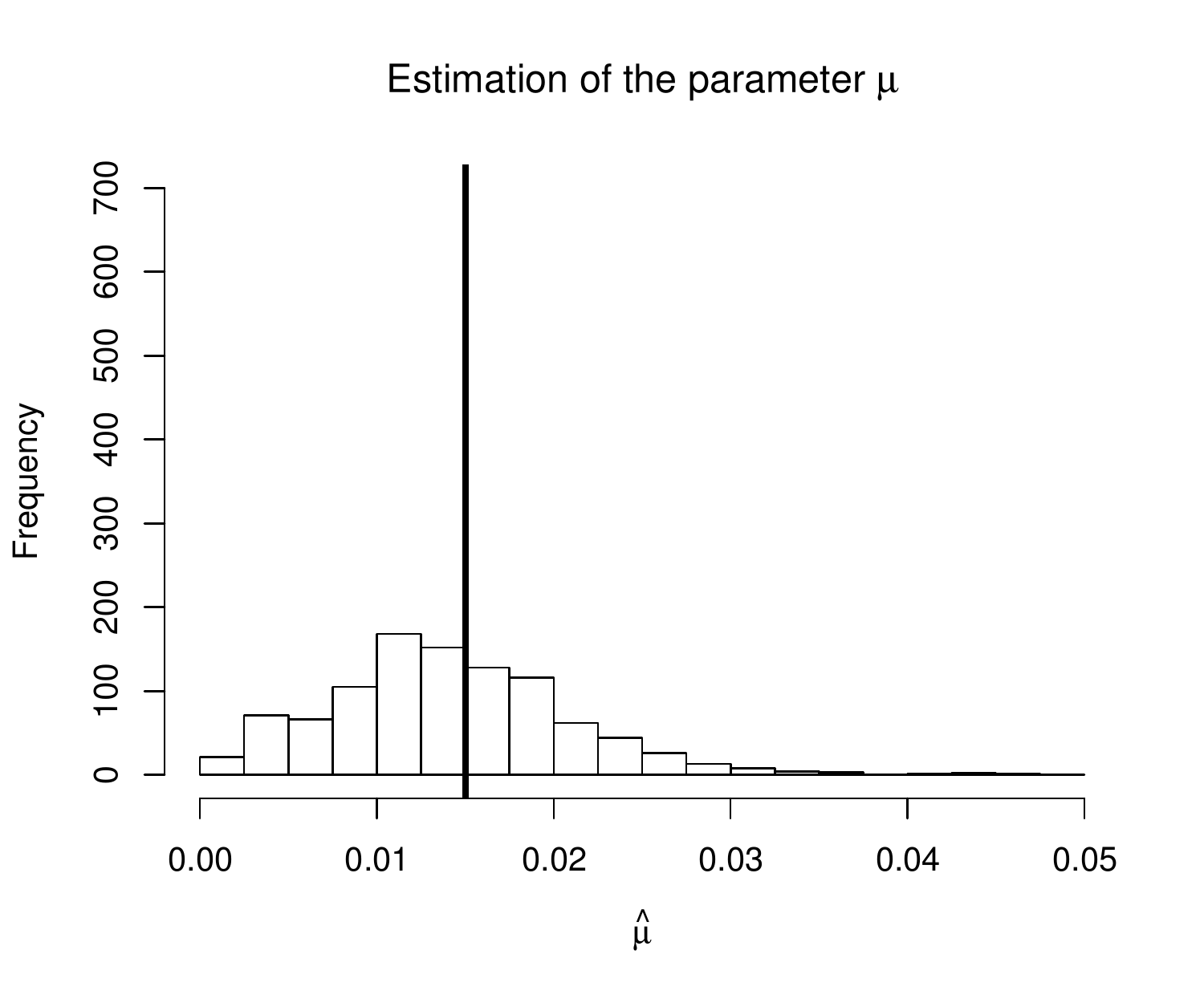}
\includegraphics[height=0.49\textwidth,angle=270]{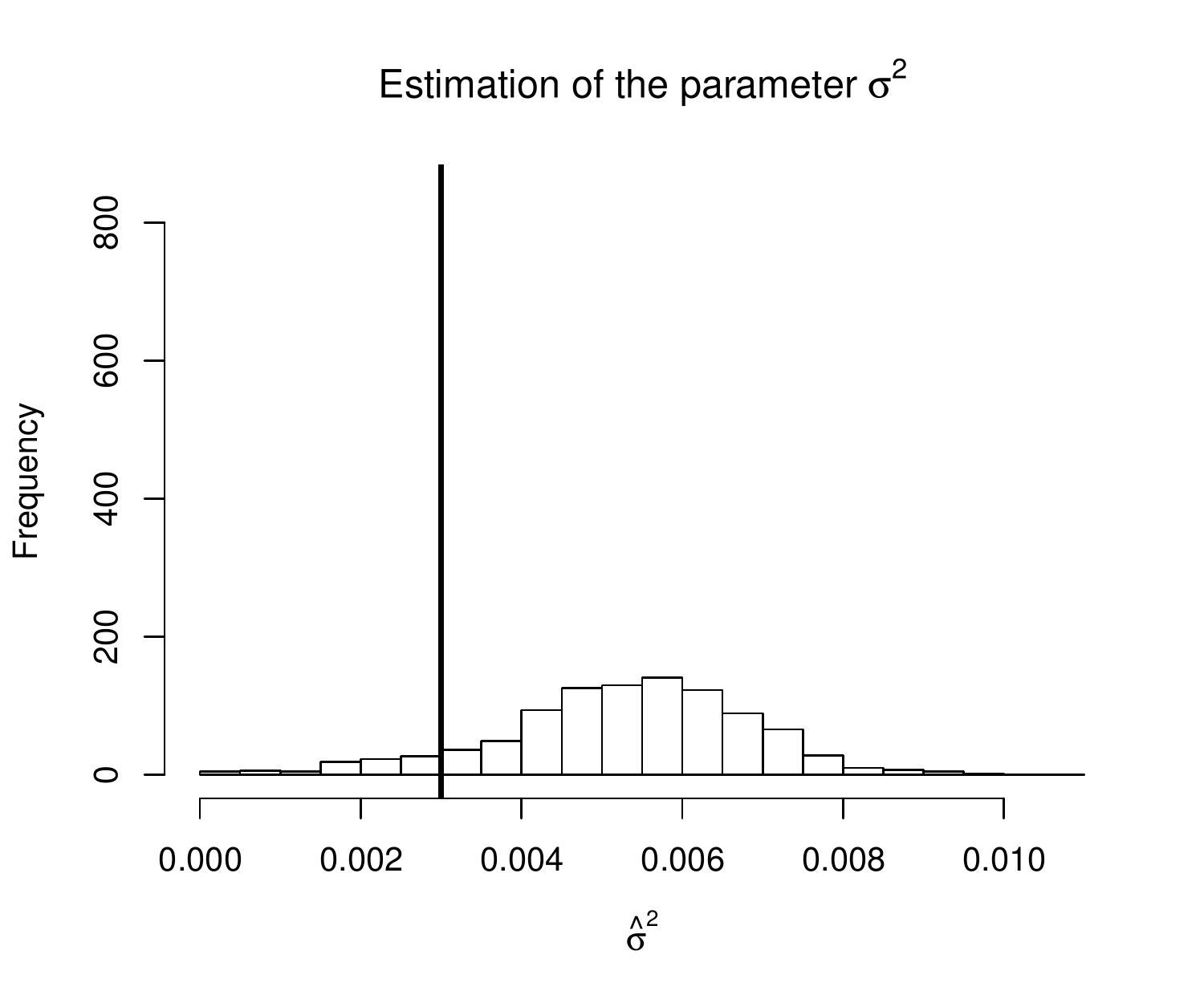}\\
\includegraphics[height=0.49\textwidth,angle=270]{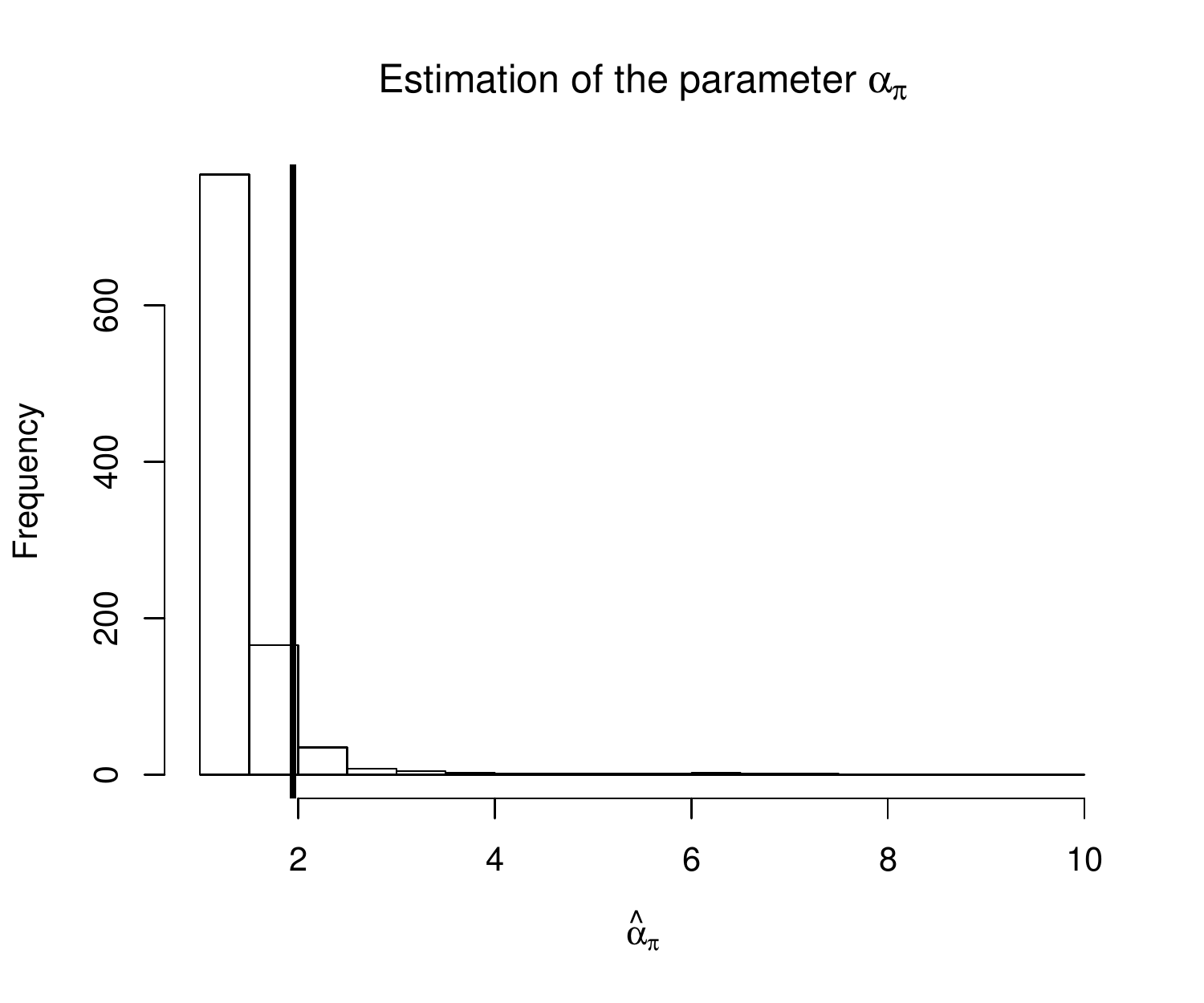}
\includegraphics[height=0.49\textwidth,angle=270]{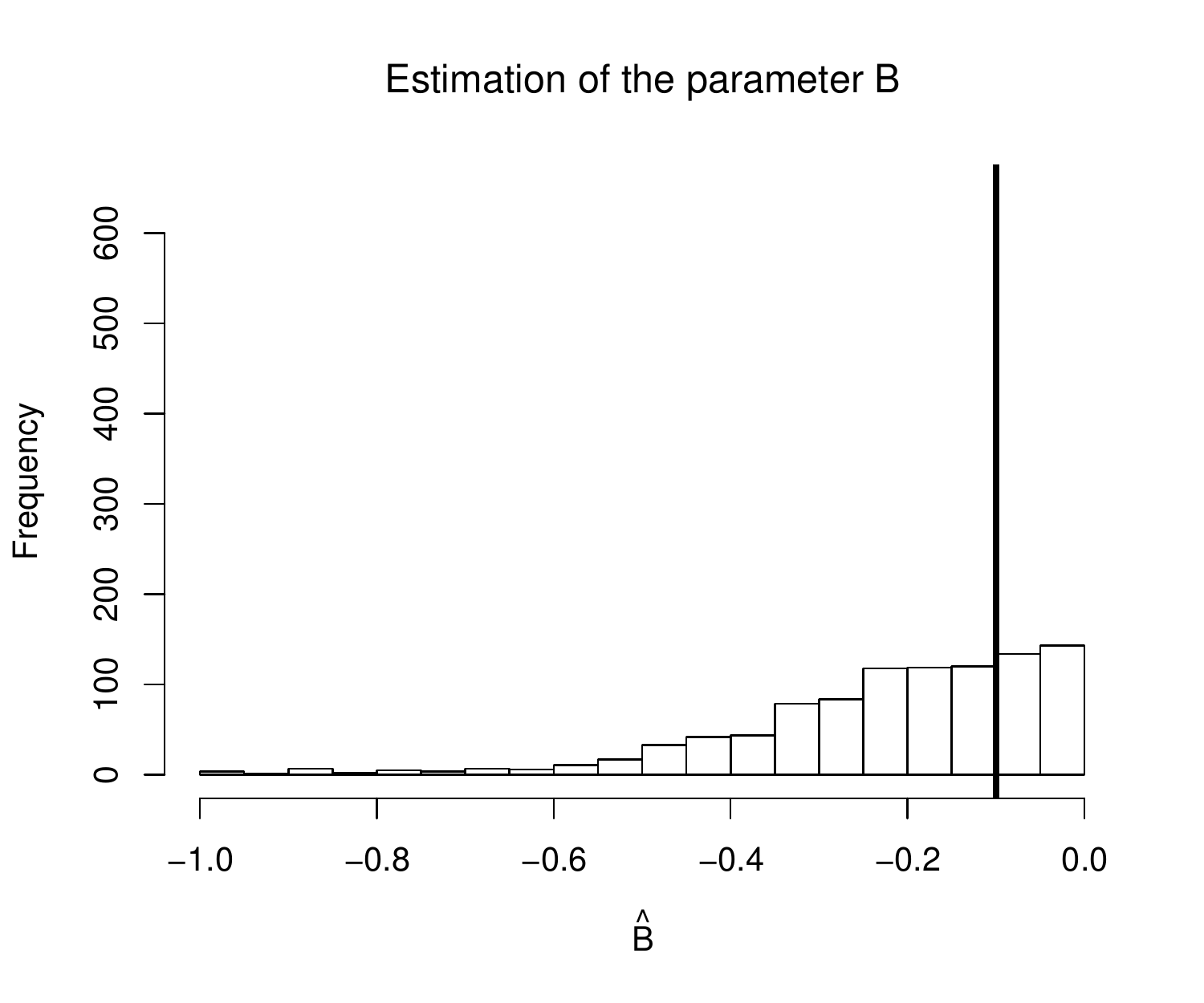}
\caption{Histograms of parameter estimates of 1000 paths of length 1000  of a supOU process with short (upper set of plots) and with long memory (lower set of plots). The true values are indicated by black lines.}
\label{figsupOU1K}
\end{figure}

\begin{figure}[p]
\center
\includegraphics[height=0.49\textwidth,angle=270]{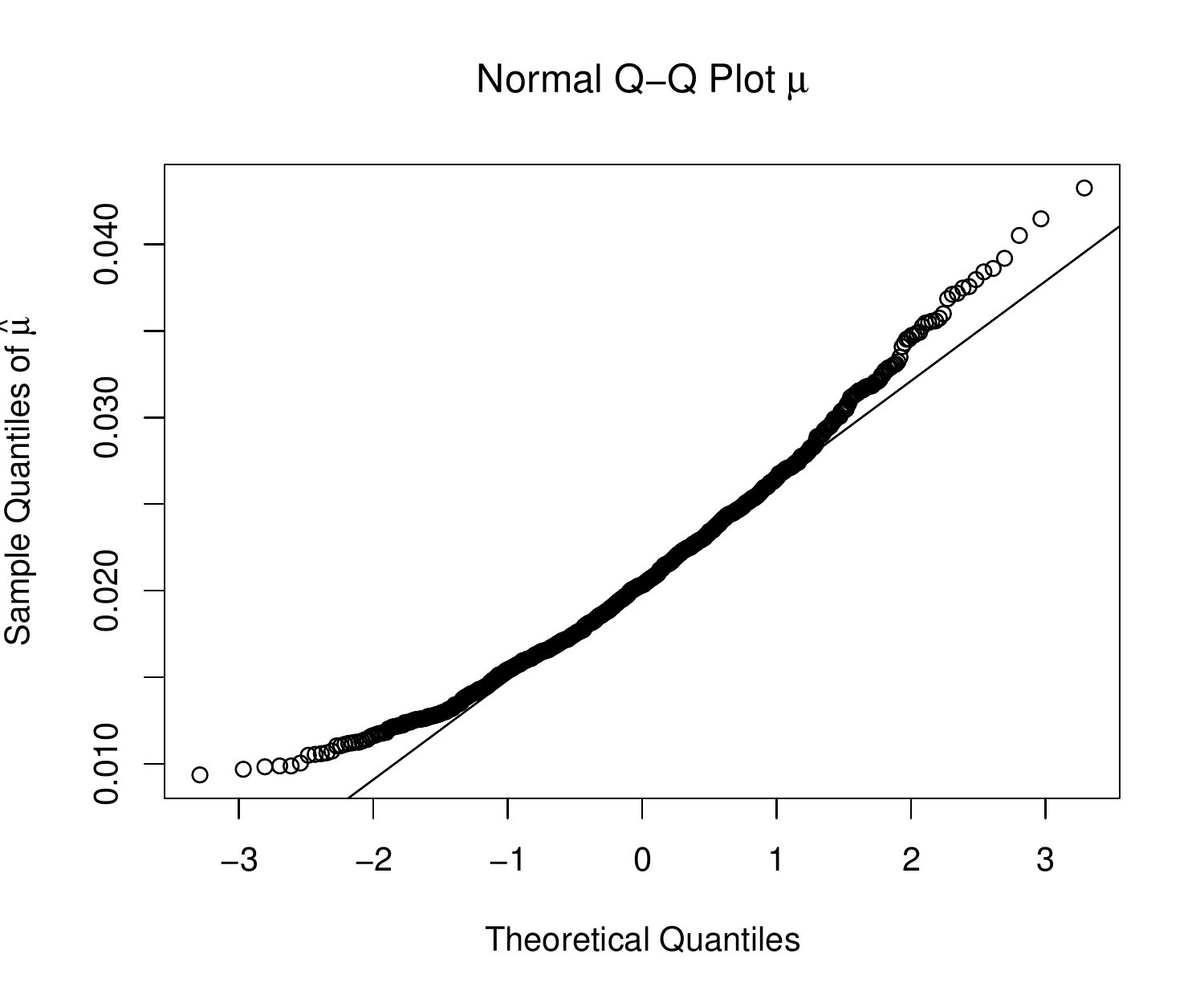}
\includegraphics[height=0.49\textwidth,angle=270]{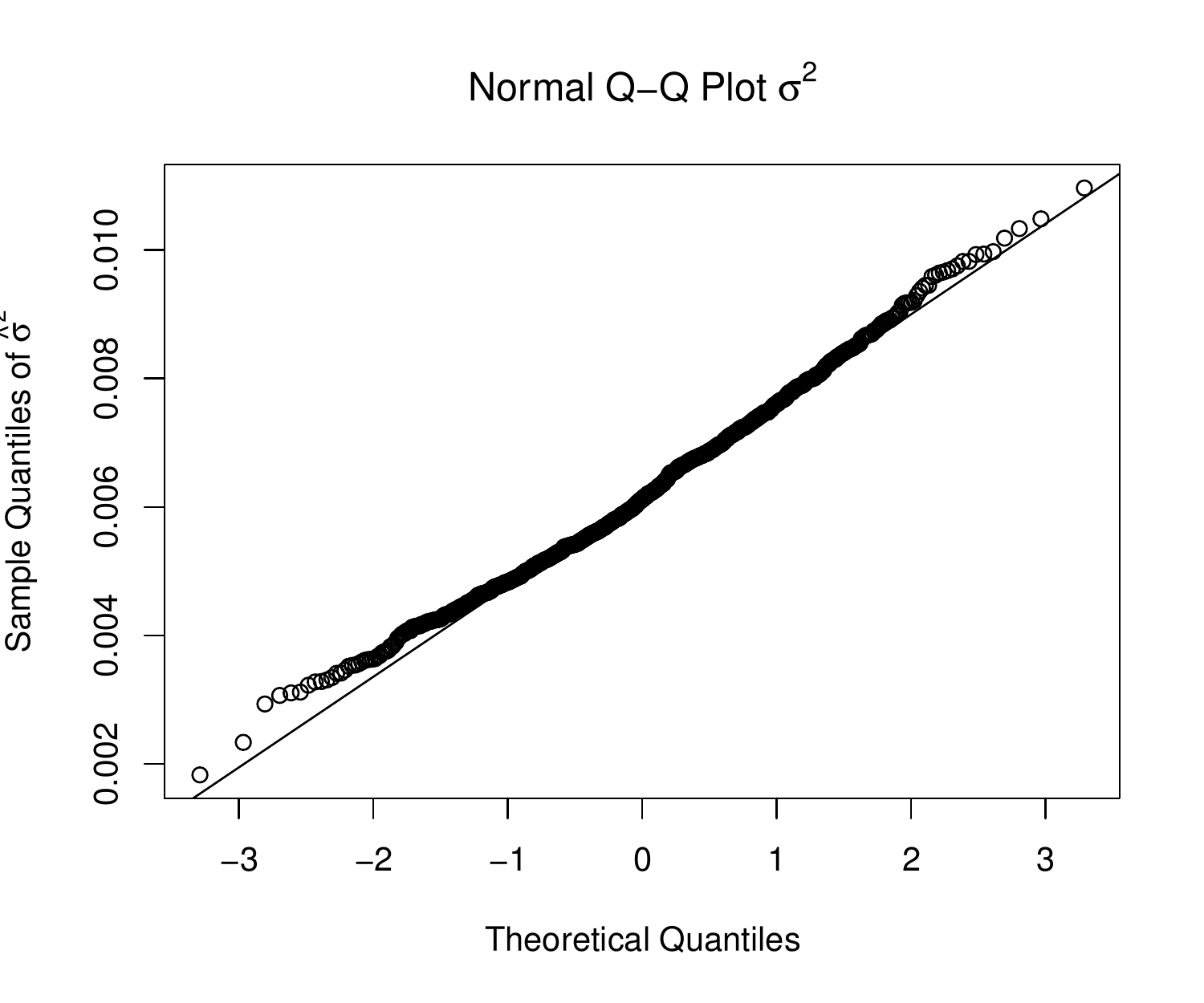}\\
\includegraphics[height=0.49\textwidth,angle=270]{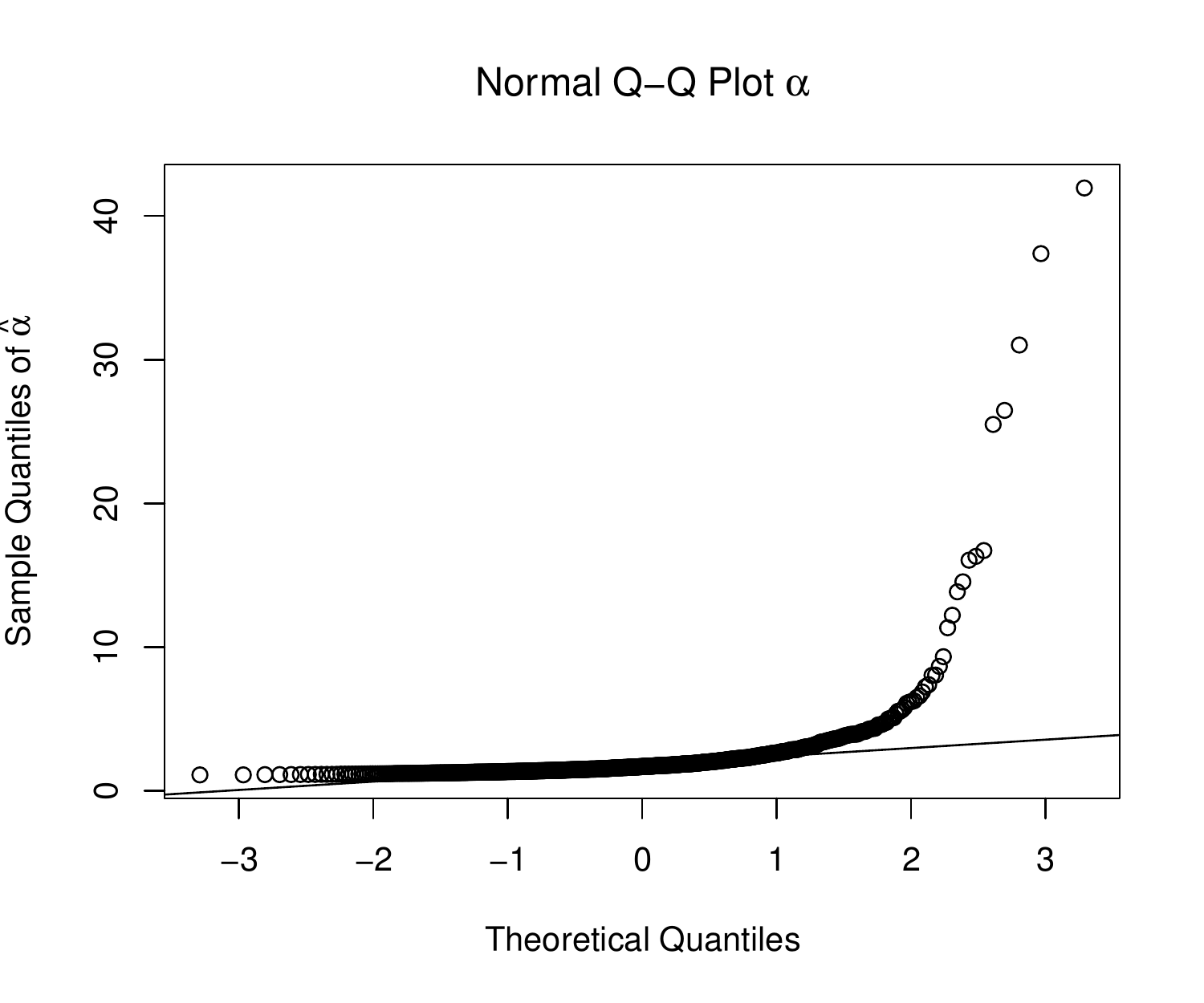}
\includegraphics[height=0.49\textwidth,angle=270]{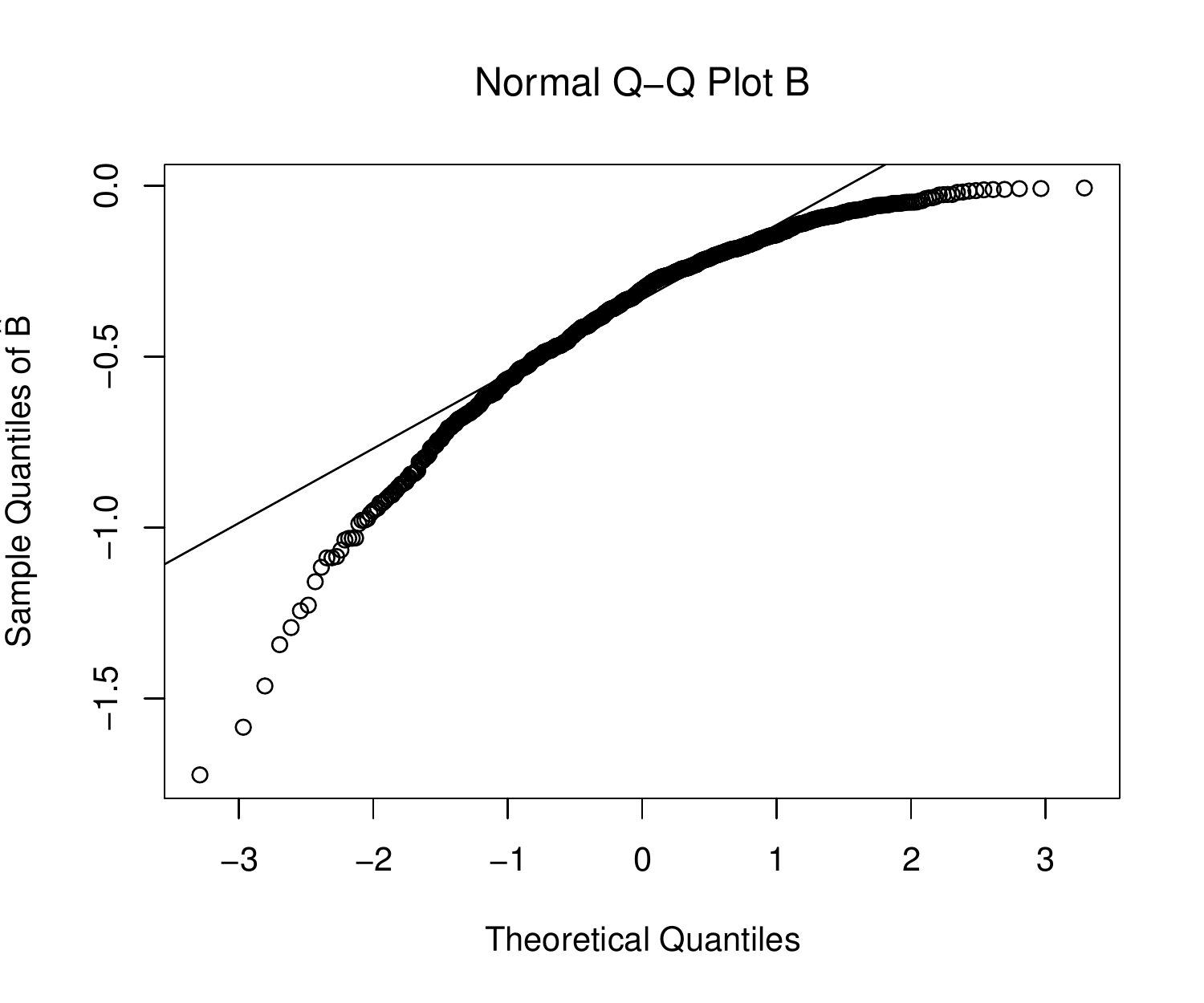}\\[5mm]
\includegraphics[height=0.49\textwidth,angle=270]{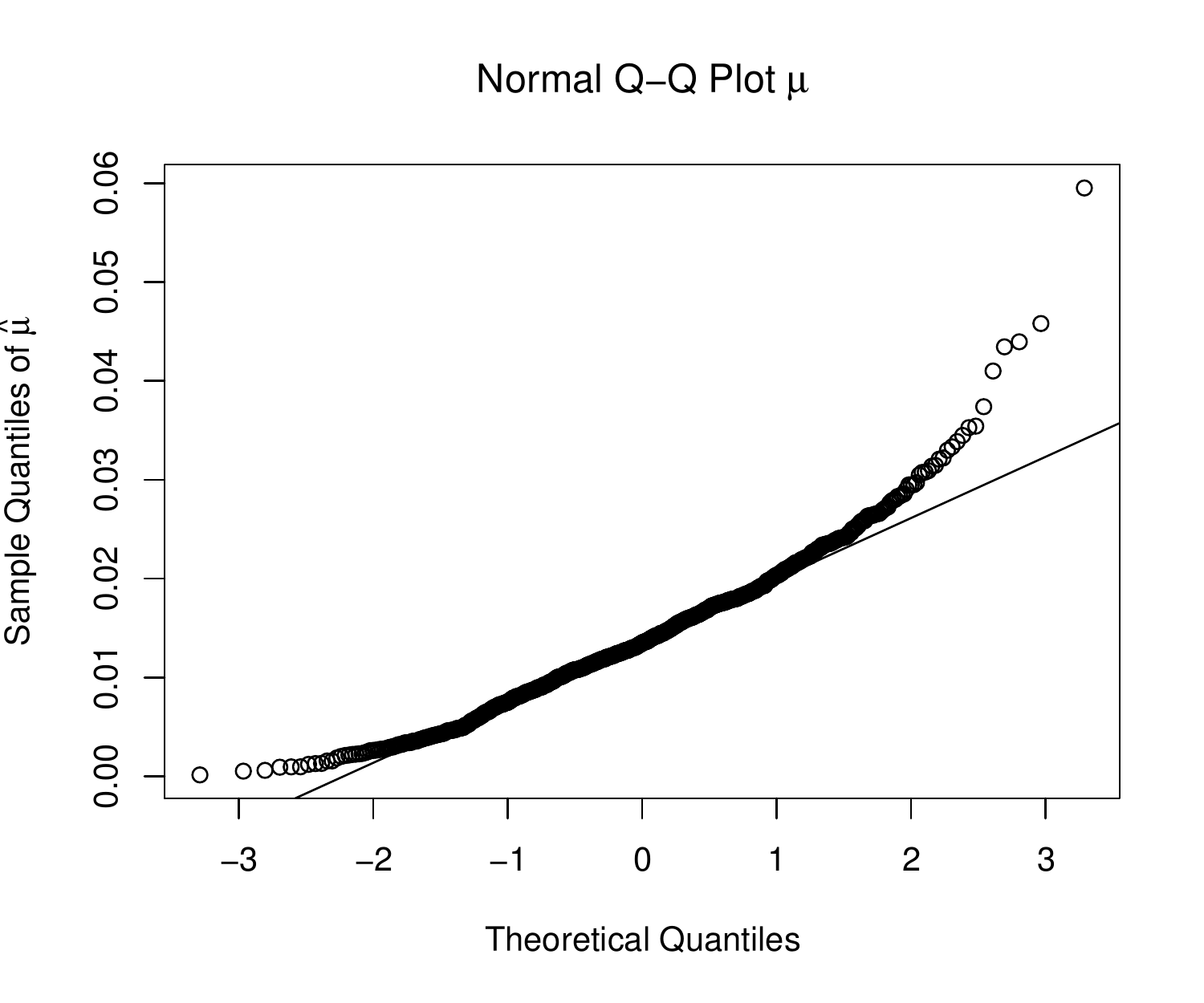}
\includegraphics[height=0.49\textwidth,angle=270]{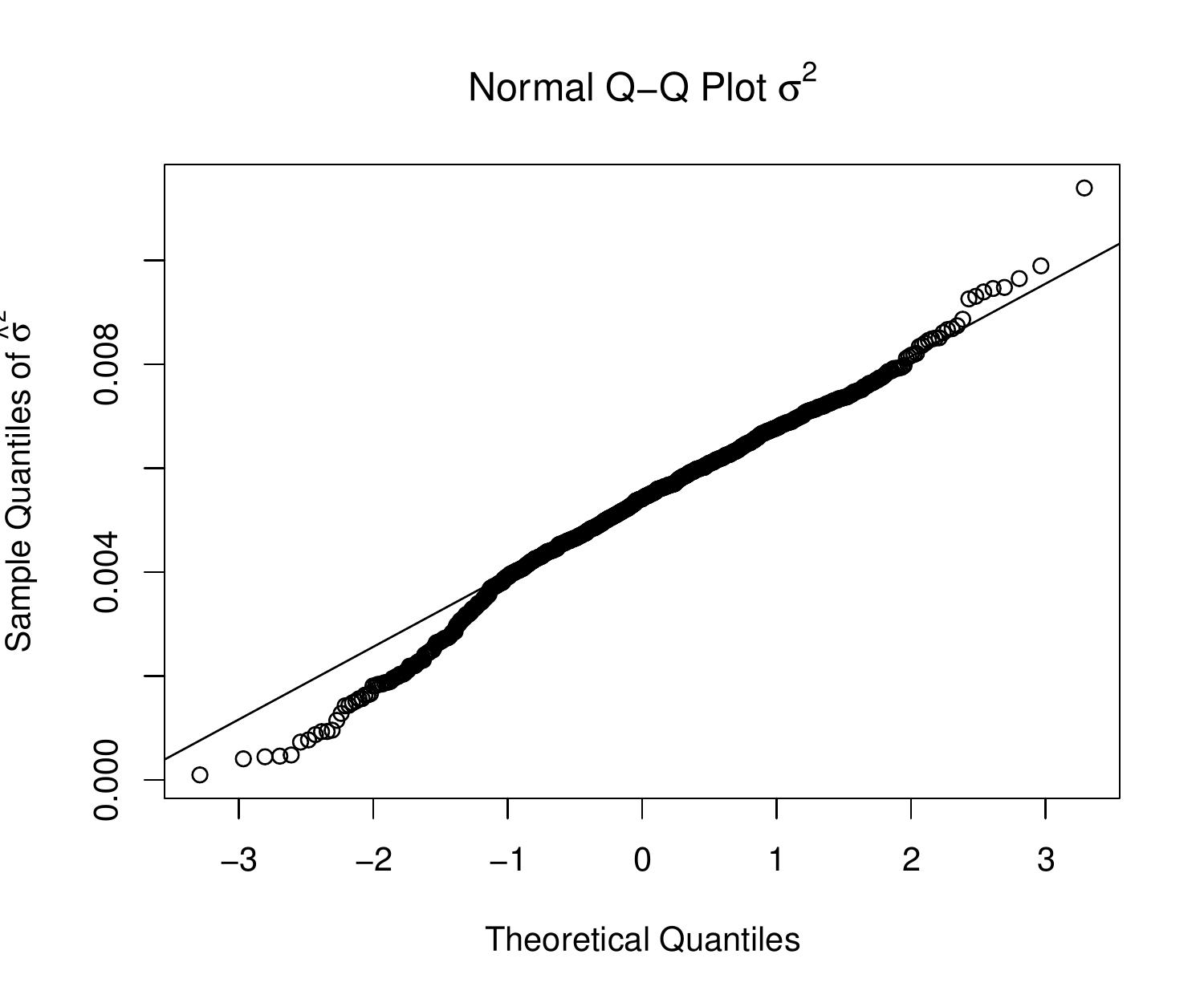}\\
\includegraphics[height=0.49\textwidth,angle=270]{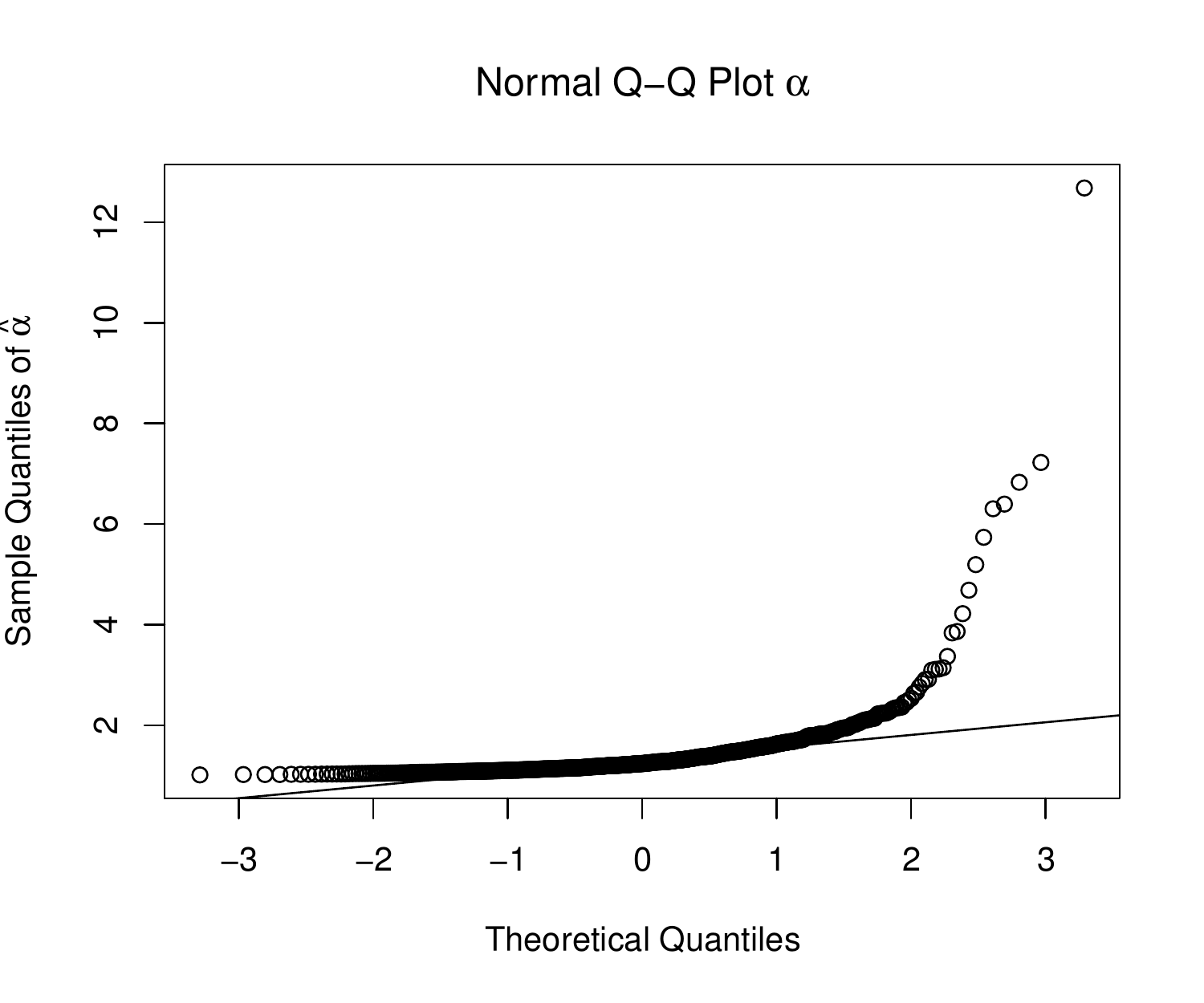}
\includegraphics[height=0.49\textwidth,angle=270]{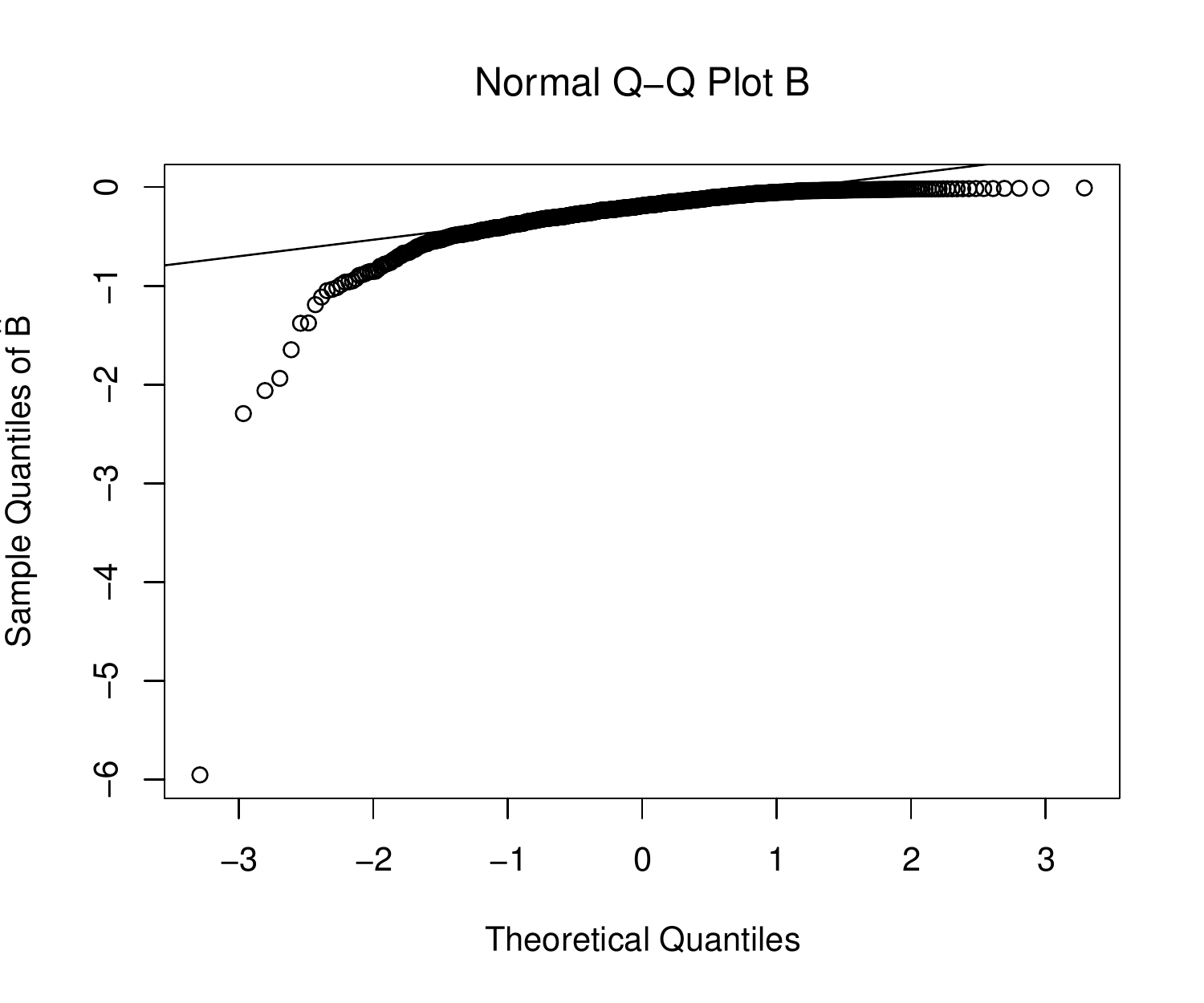}
\caption{Normal QQ-Plots of parameter estimates of 1000 paths of length 1000  of a supOU process with short (upper set of plots) and with long memory (lower set of plots).}
\label{figsupOU1Kqq}
\end{figure}

\begin{figure}[p]
\center
\includegraphics[height=0.49\textwidth,angle=270]{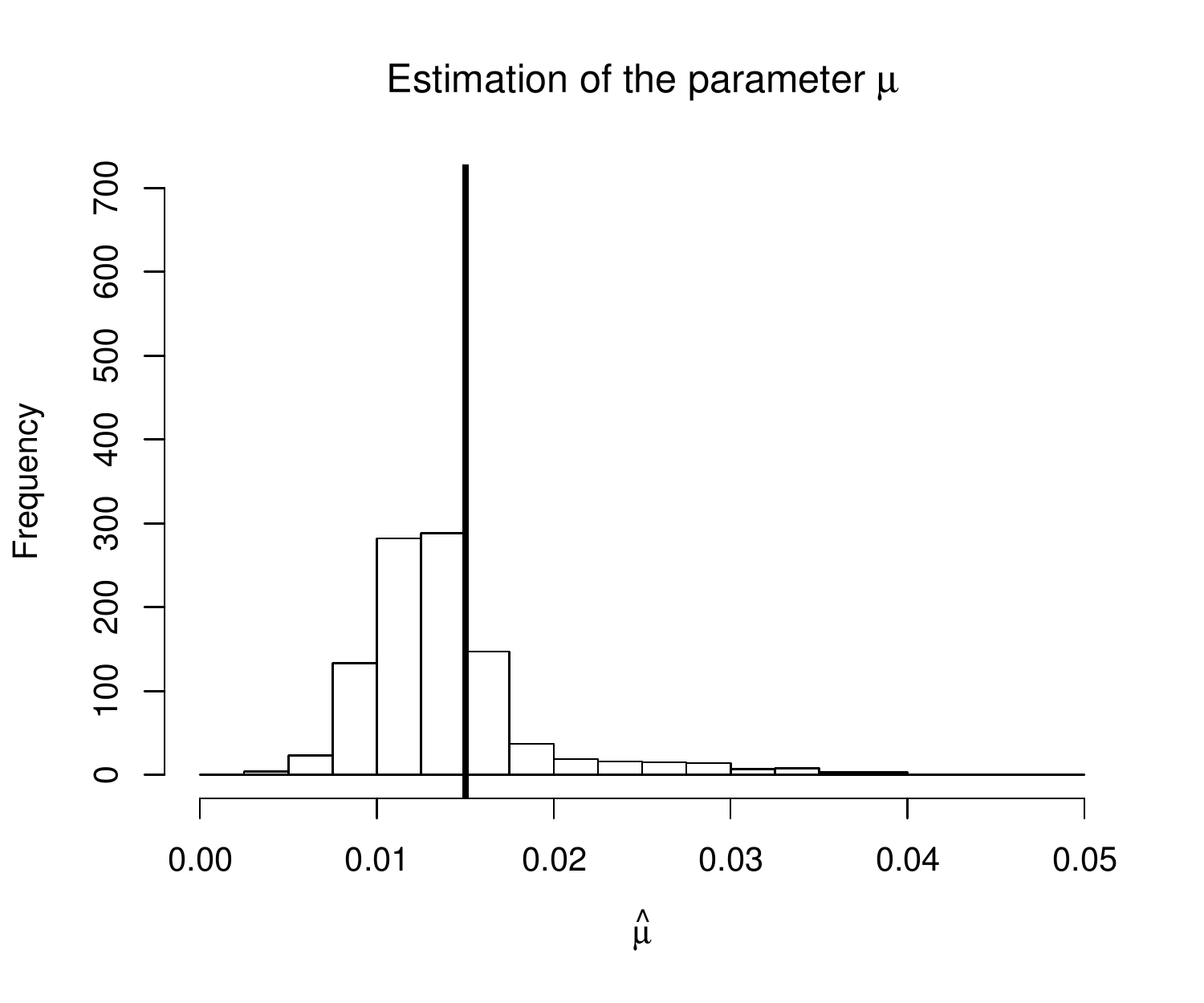}
\includegraphics[height=0.49\textwidth,angle=270]{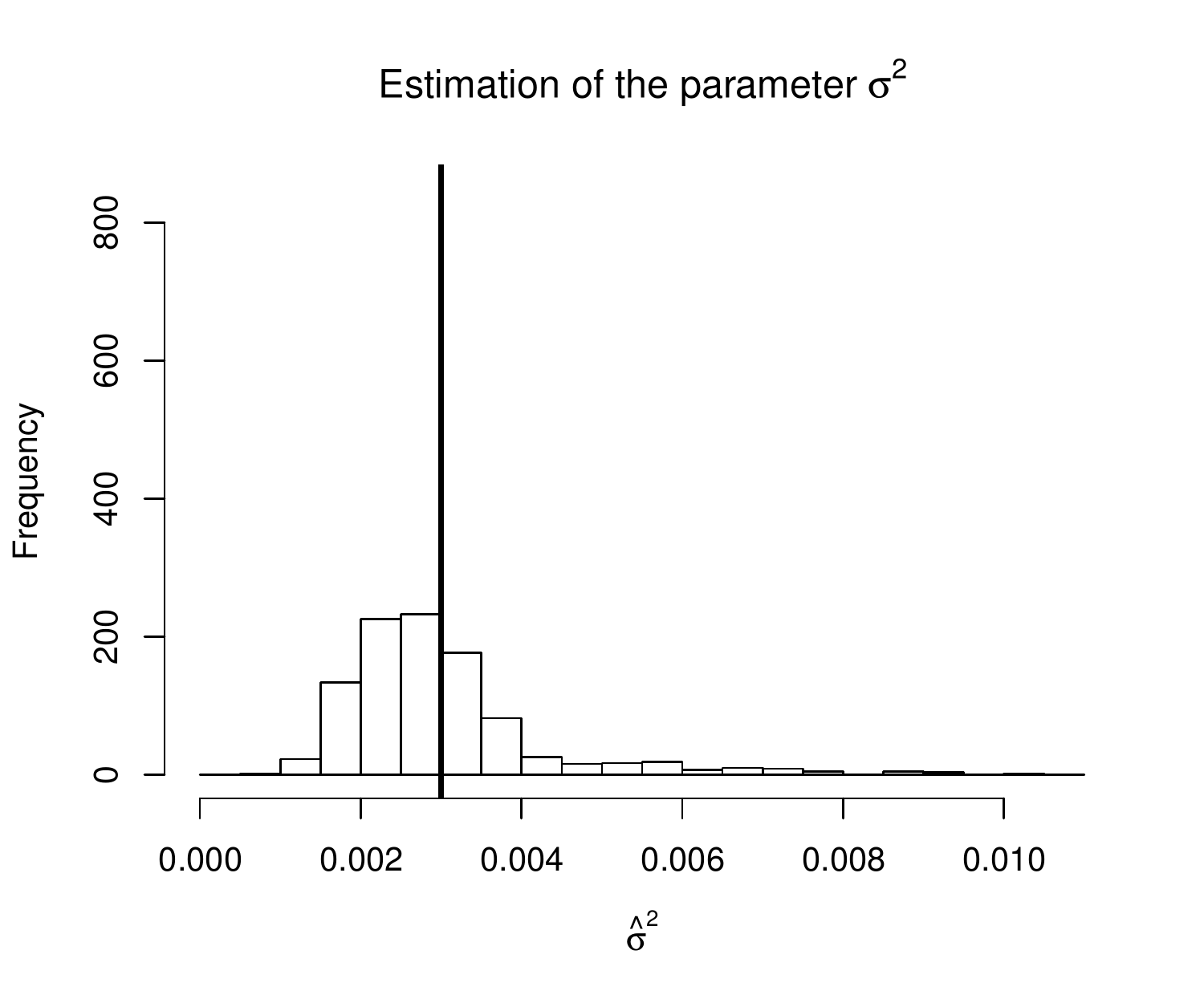}\\
\includegraphics[height=0.49\textwidth,angle=270]{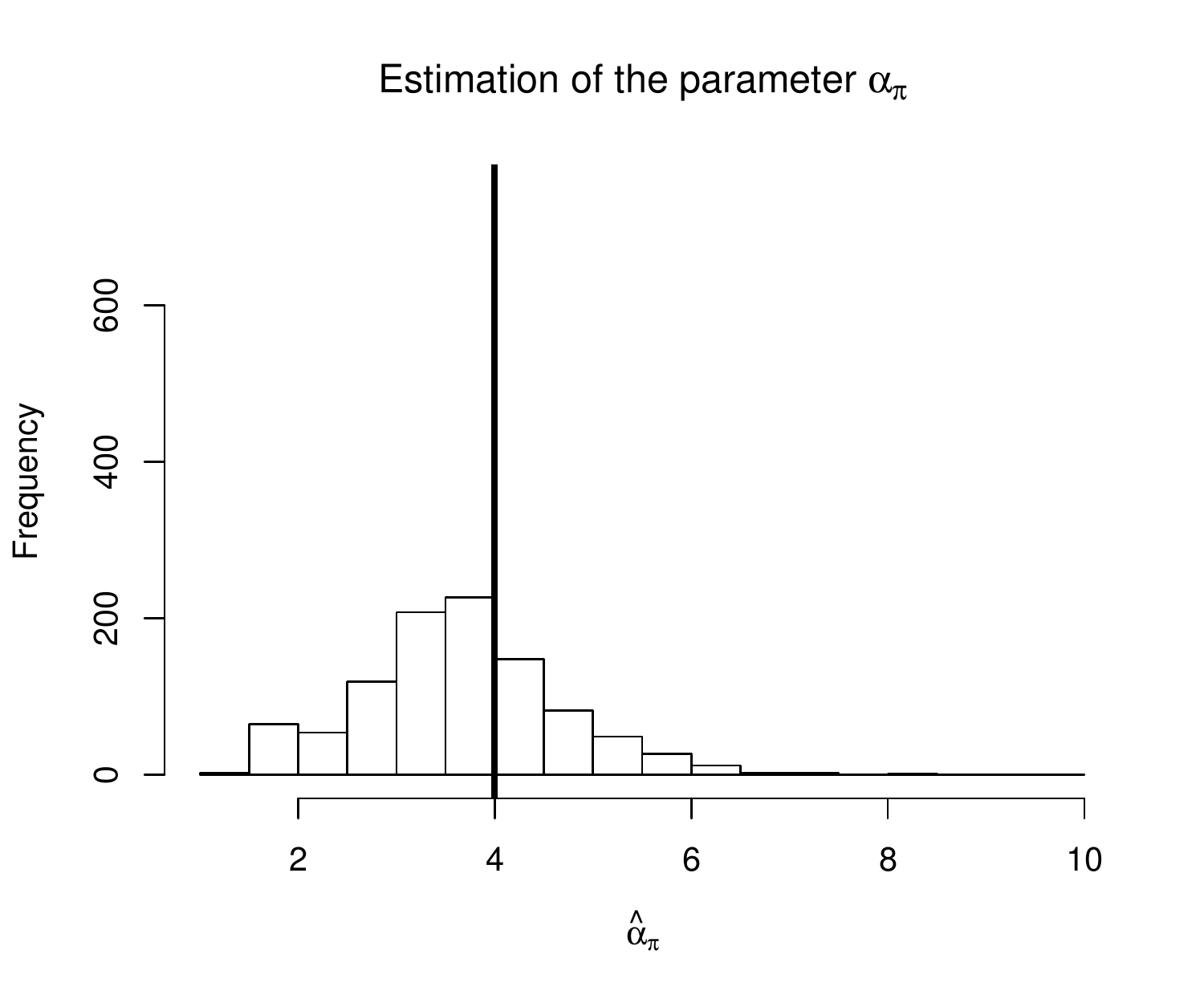}
\includegraphics[height=0.49\textwidth,angle=270]{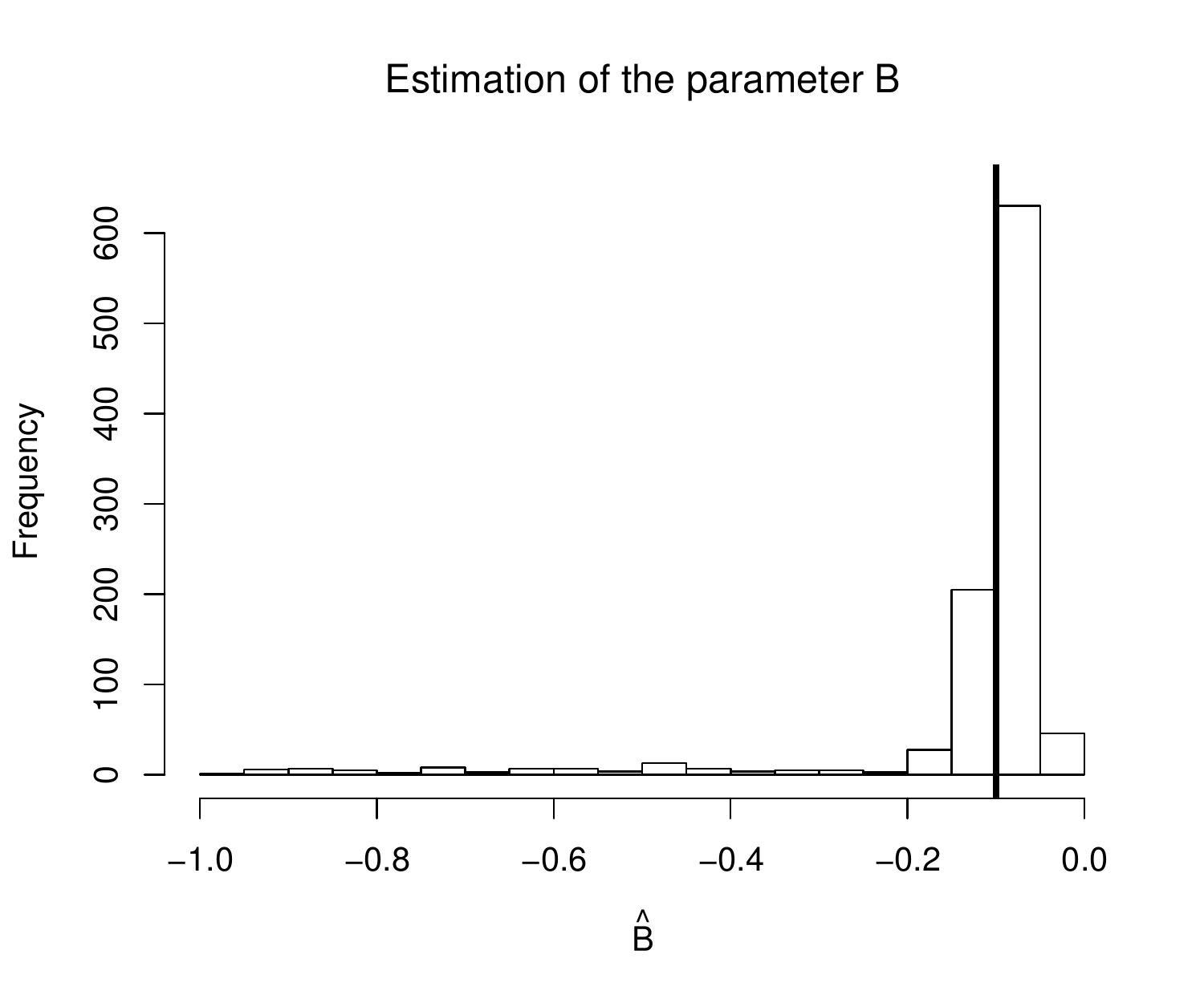}\\[5mm]
\includegraphics[height=0.49\textwidth,angle=270]{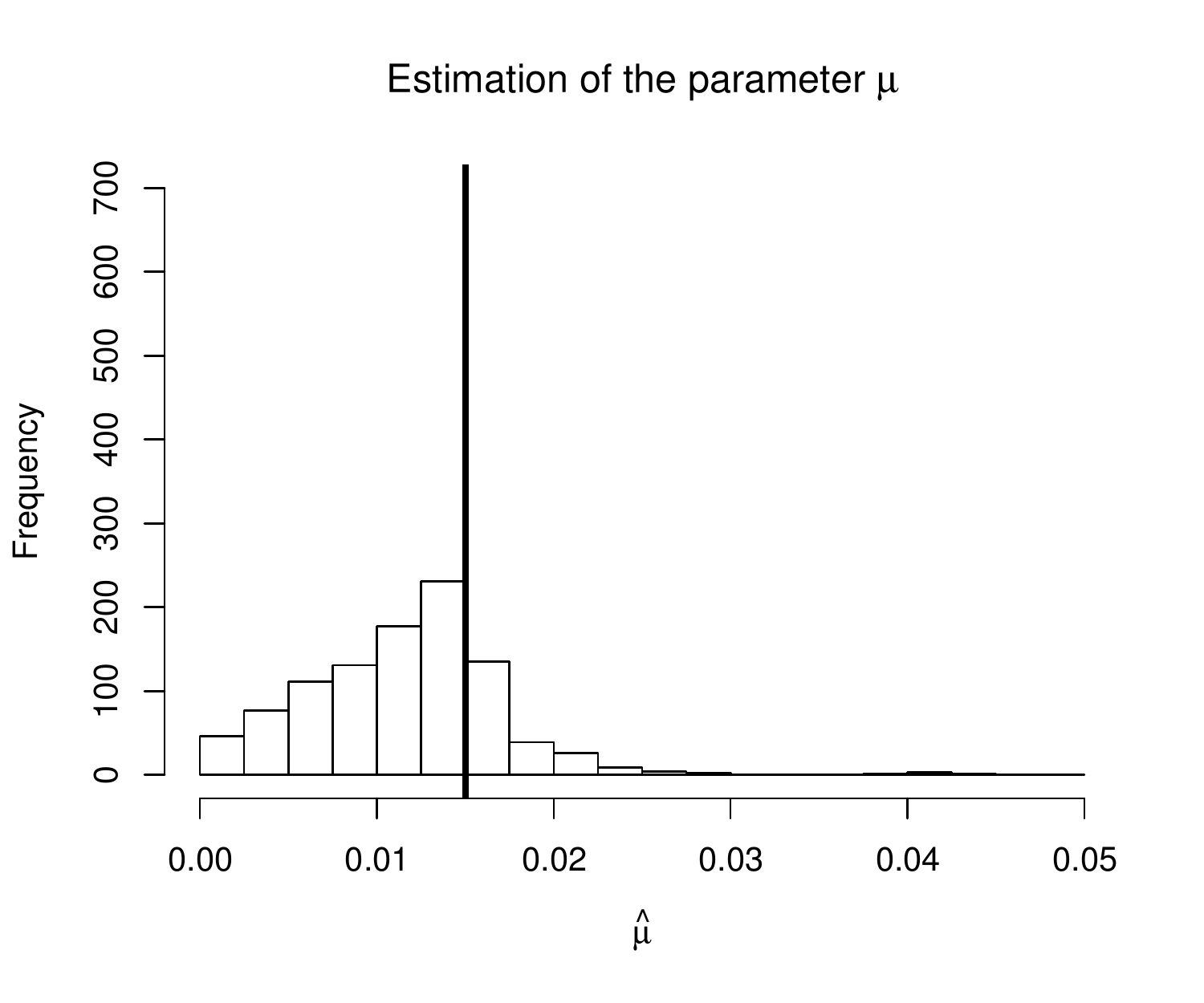}
\includegraphics[height=0.49\textwidth,angle=270]{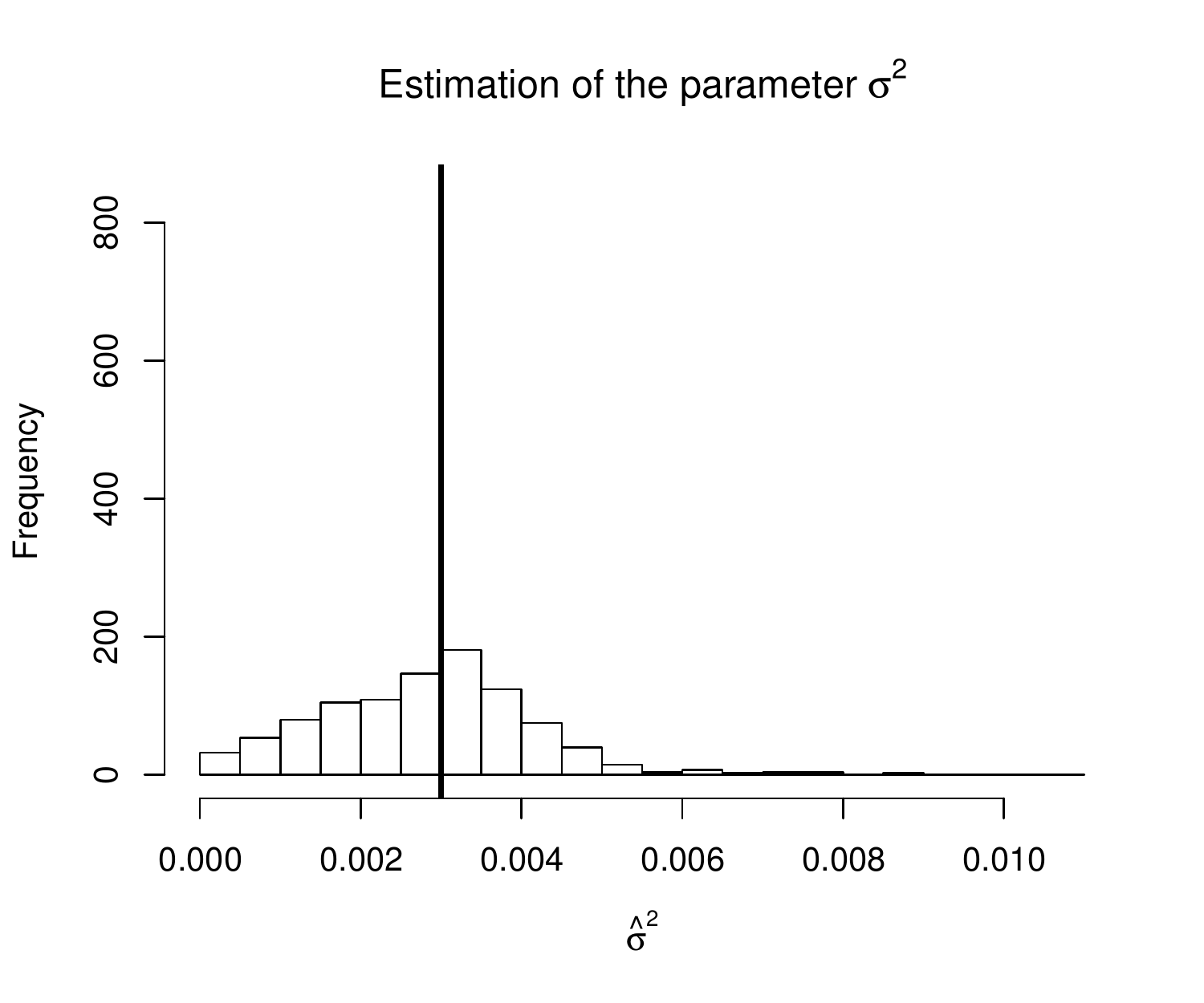}\\
\includegraphics[height=0.49\textwidth,angle=270]{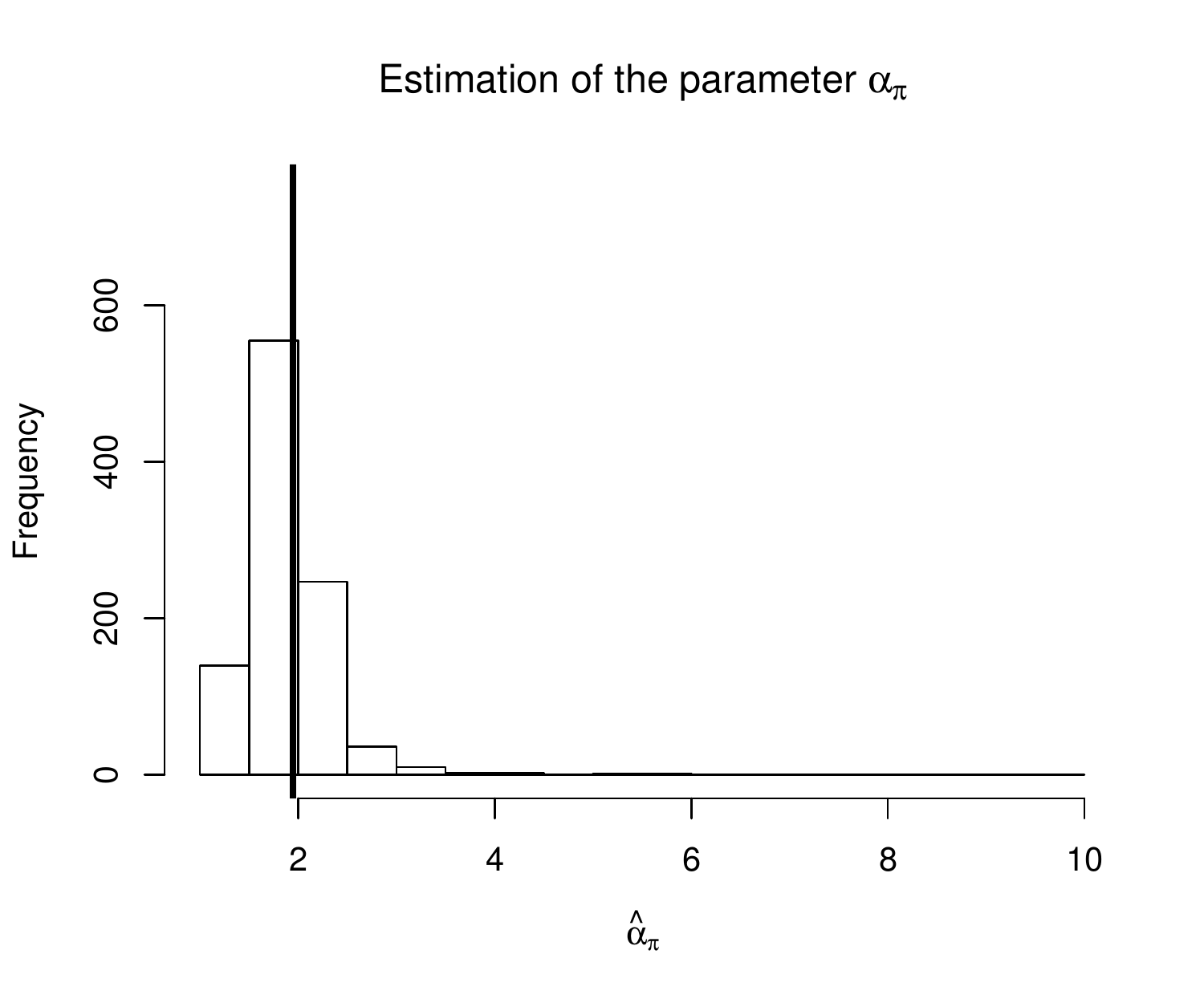}
\includegraphics[height=0.49\textwidth,angle=270]{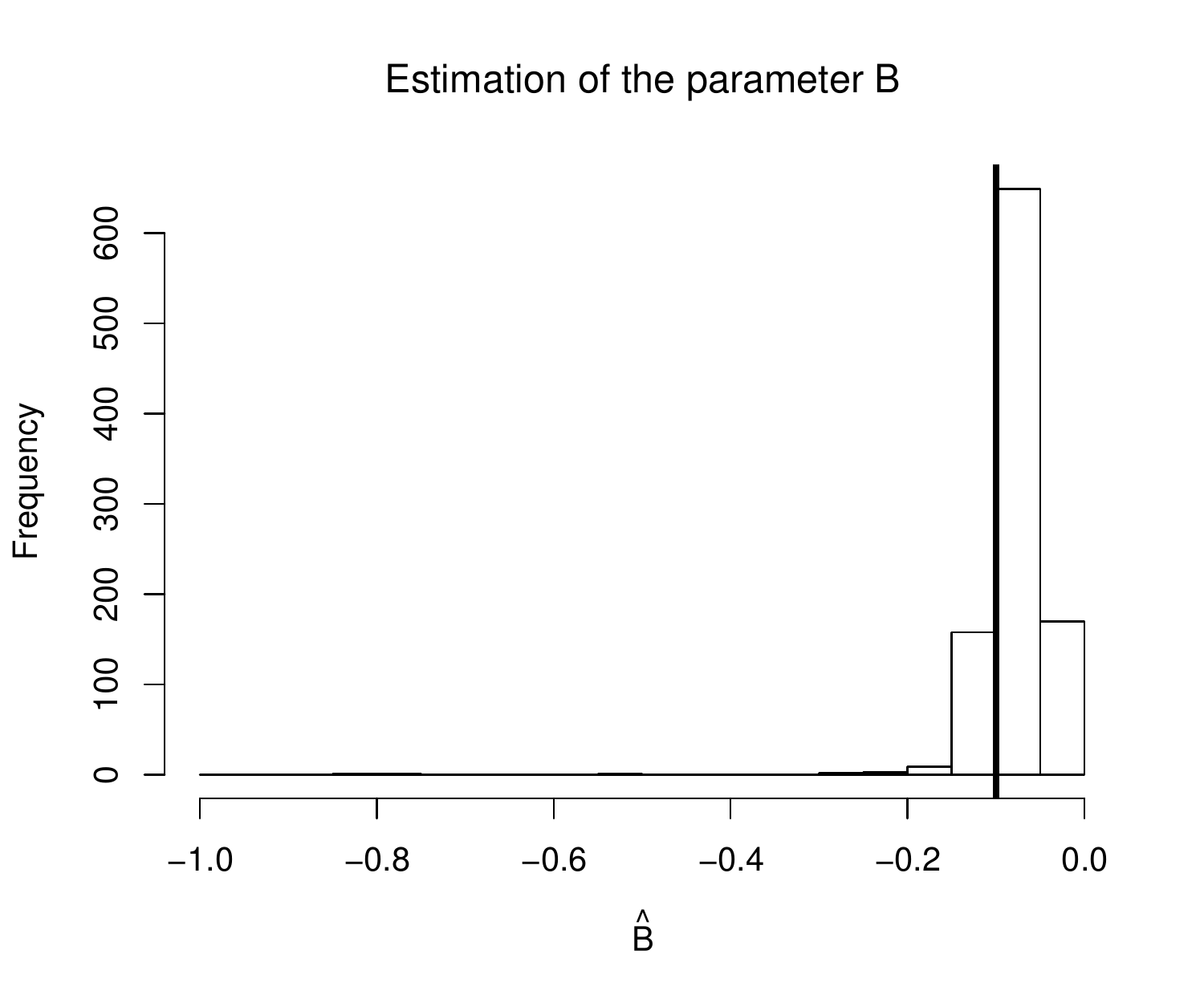}
\caption{Histograms of parameter estimates of 1000 paths of length 10000  of a supOU stochastic volatility model with short (upper set of plots) and with long memory (lower set of plots). The true values are indicated by black lines.}
\label{figSVsupOU10K}
\end{figure}

\begin{figure}[p]
\center
\includegraphics[height=0.49\textwidth,angle=270]{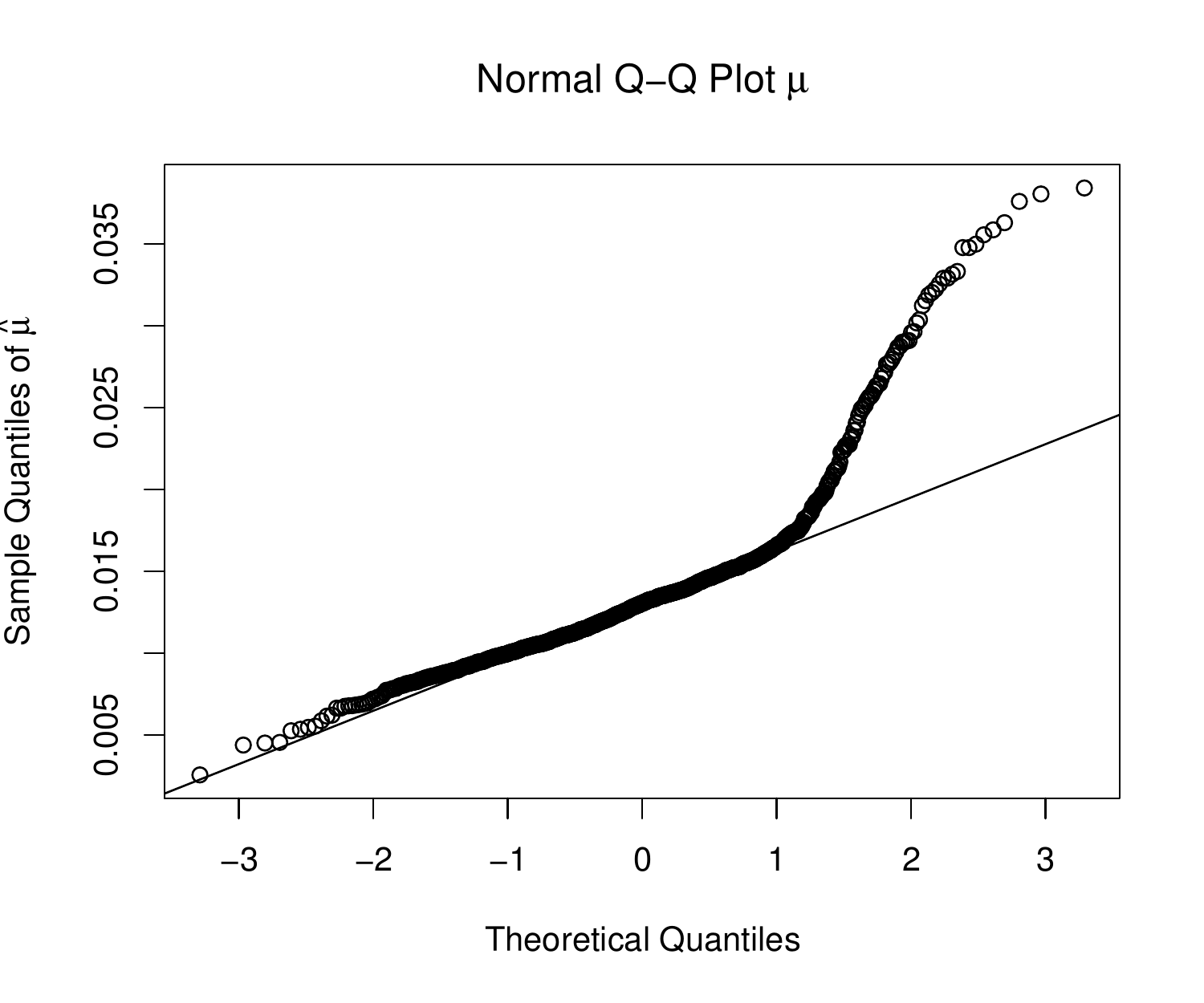}
\includegraphics[height=0.49\textwidth,angle=270]{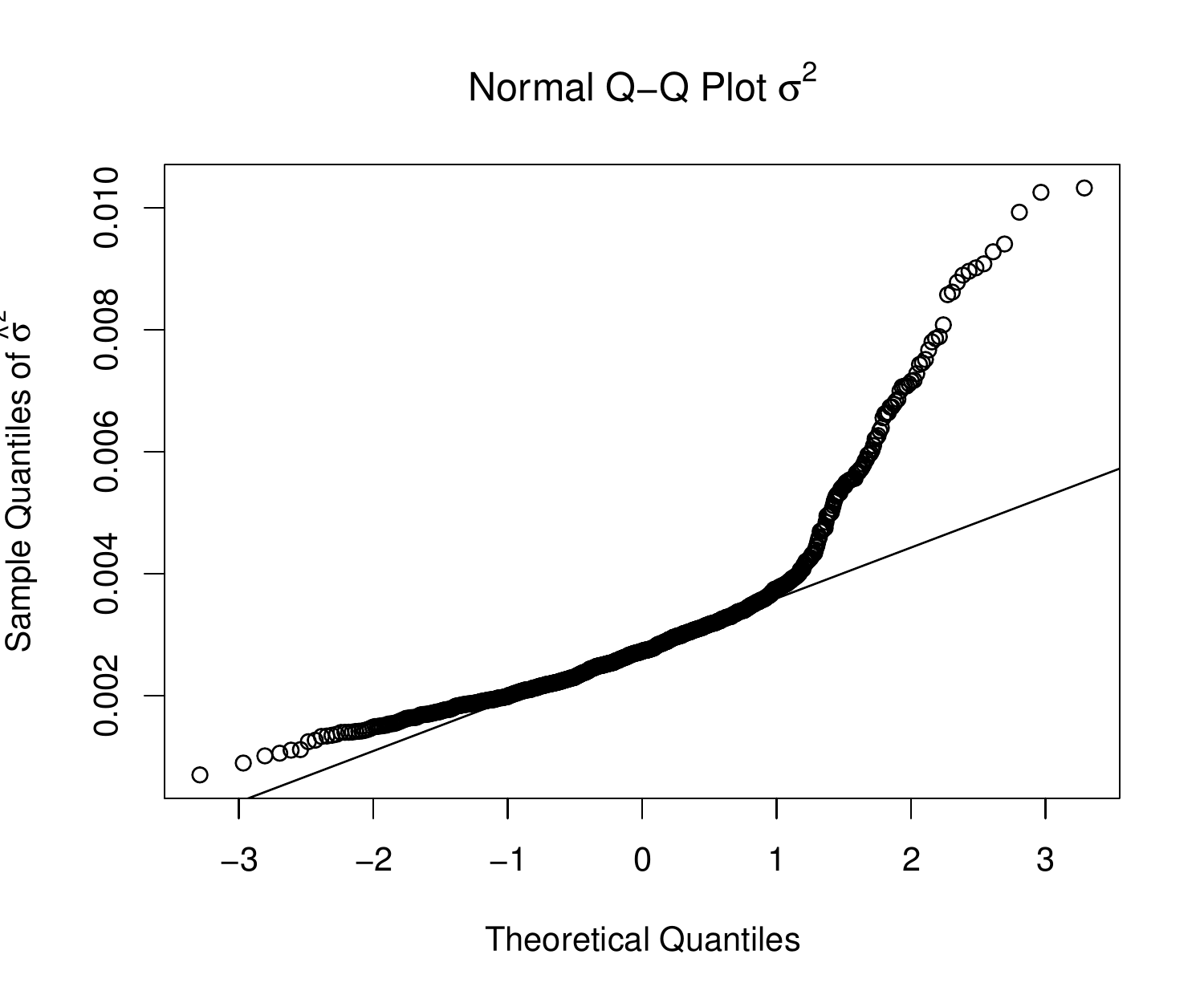}\\
\includegraphics[height=0.49\textwidth,angle=270]{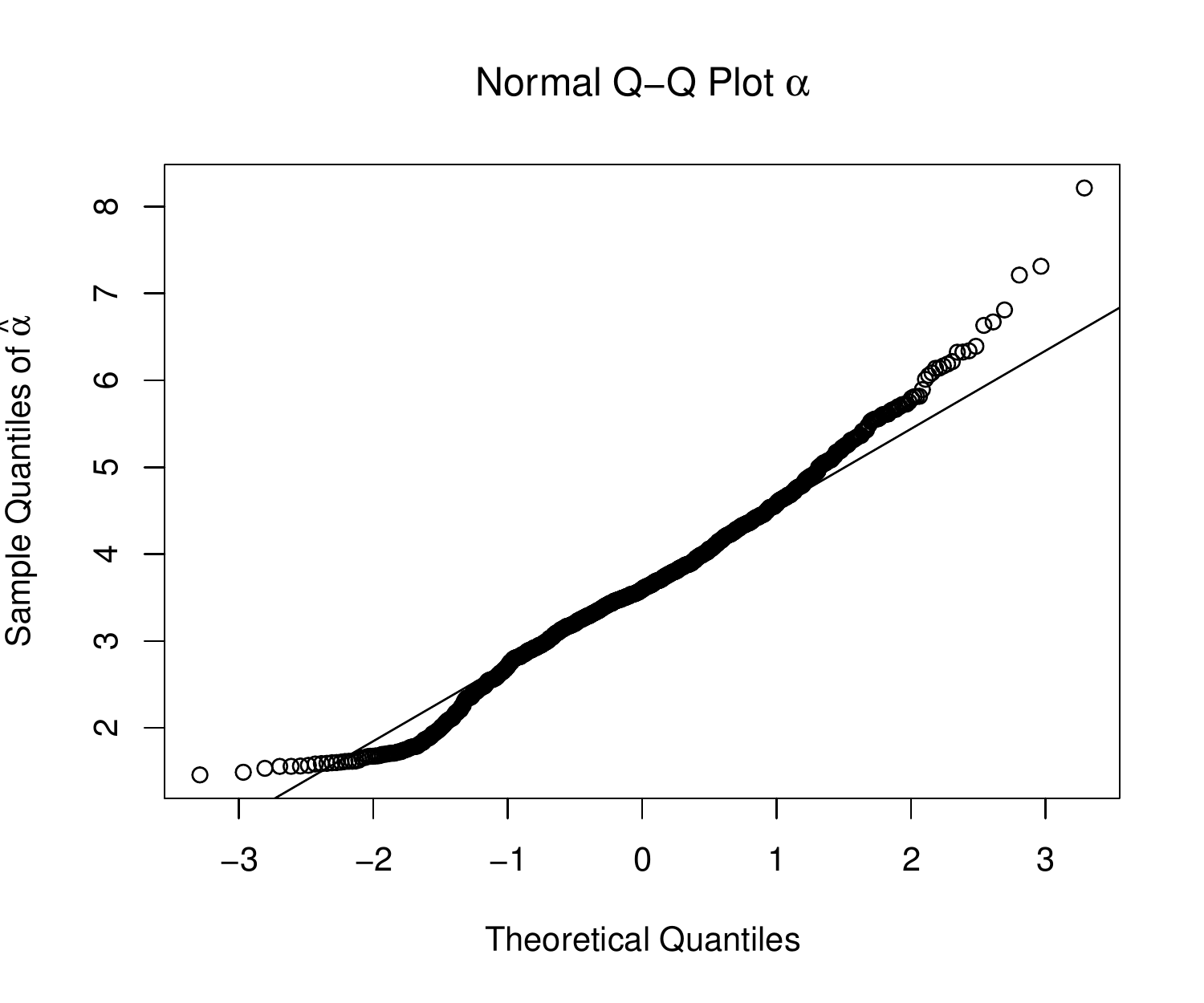}
\includegraphics[height=0.49\textwidth,angle=270]{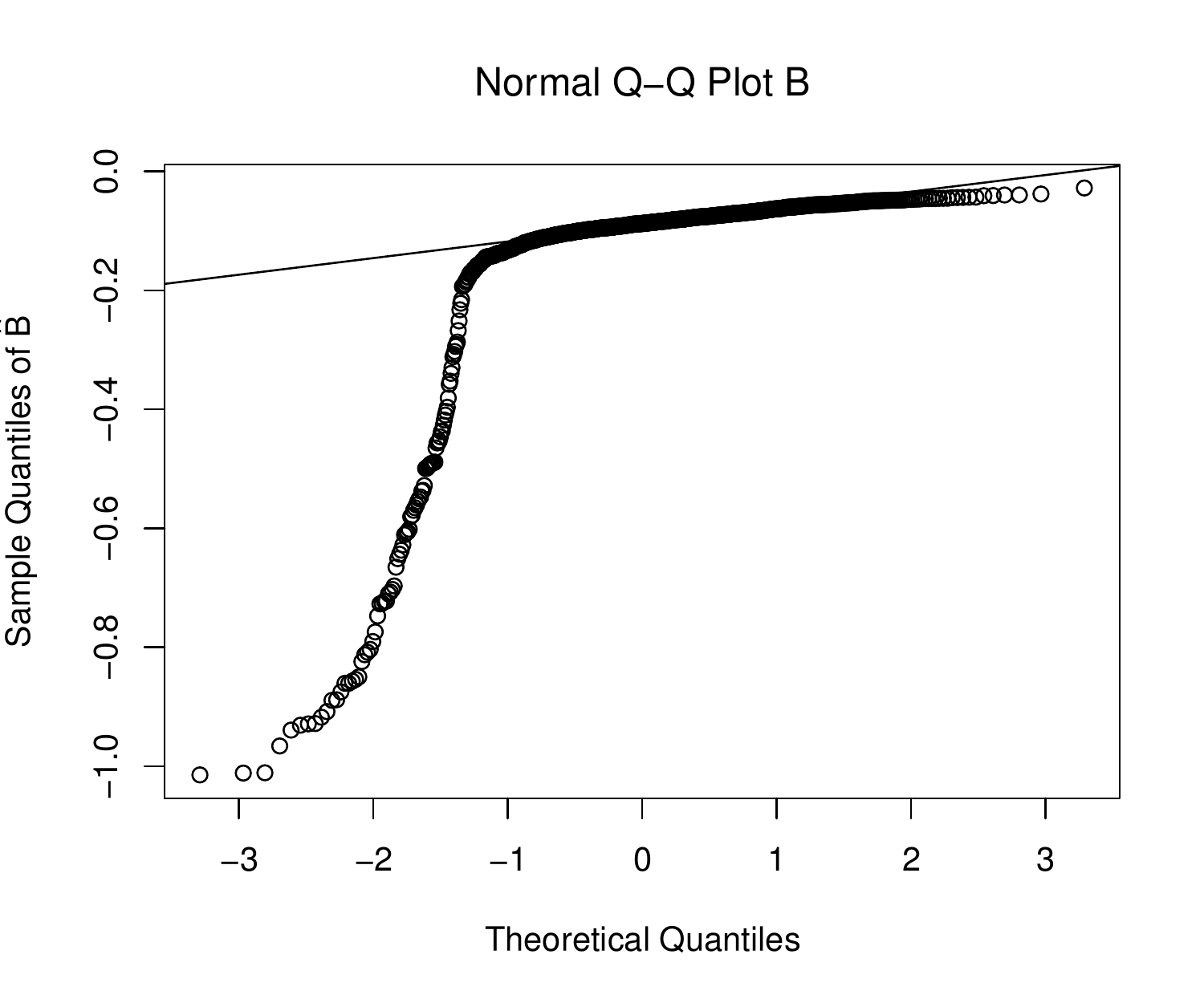}\\[5mm]
\includegraphics[height=0.49\textwidth,angle=270]{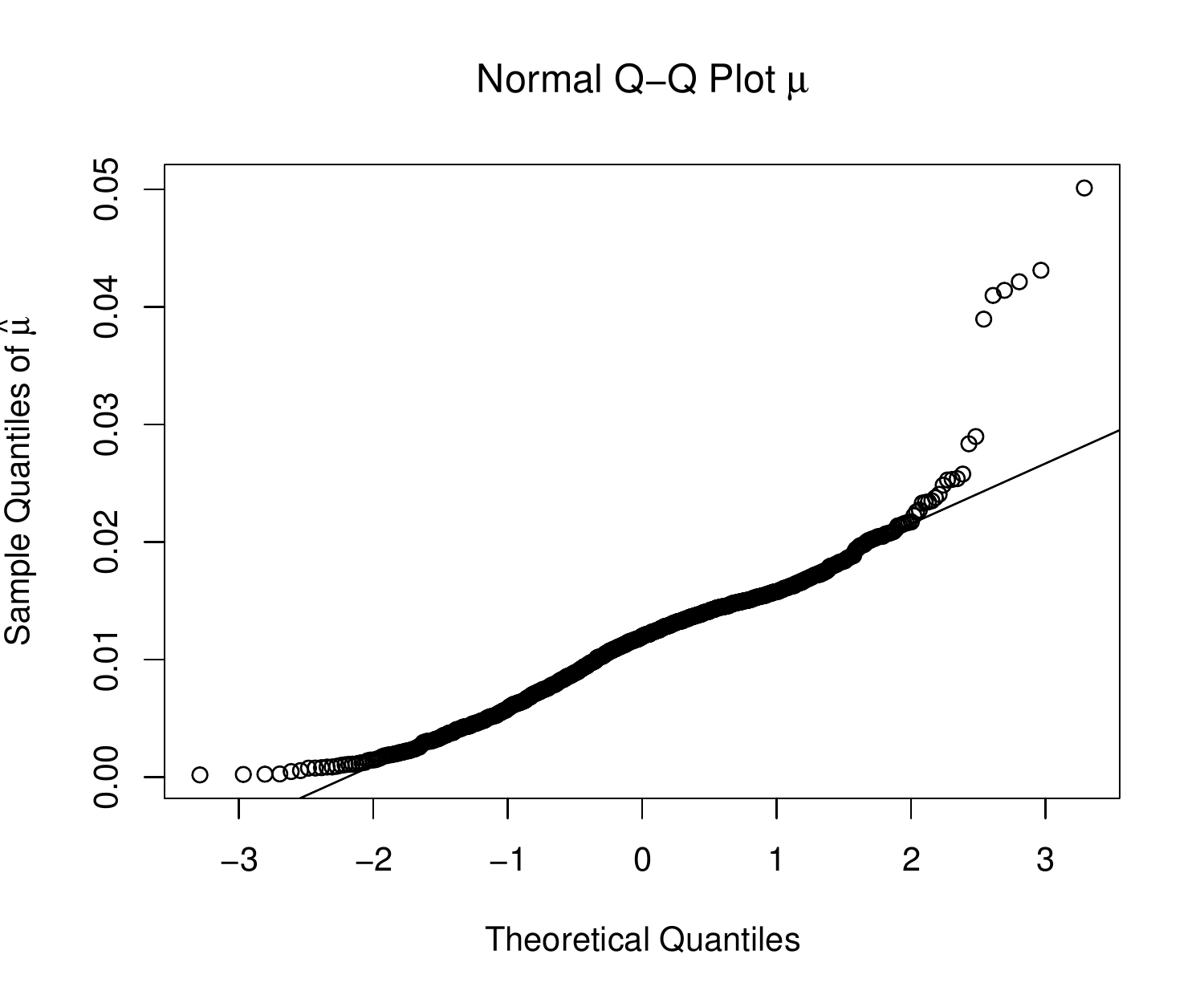}
\includegraphics[height=0.49\textwidth,angle=270]{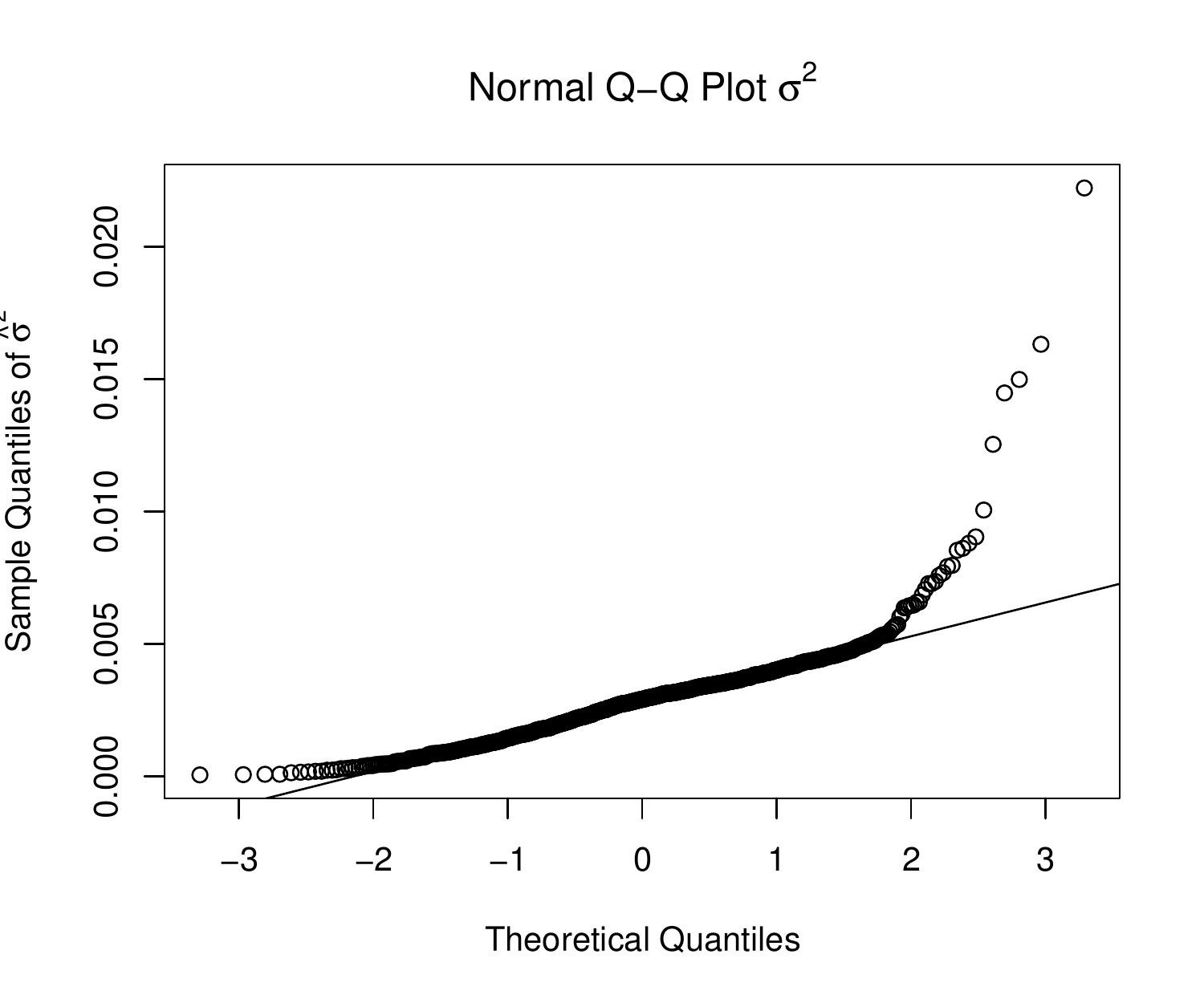}\\
\includegraphics[height=0.49\textwidth,angle=270]{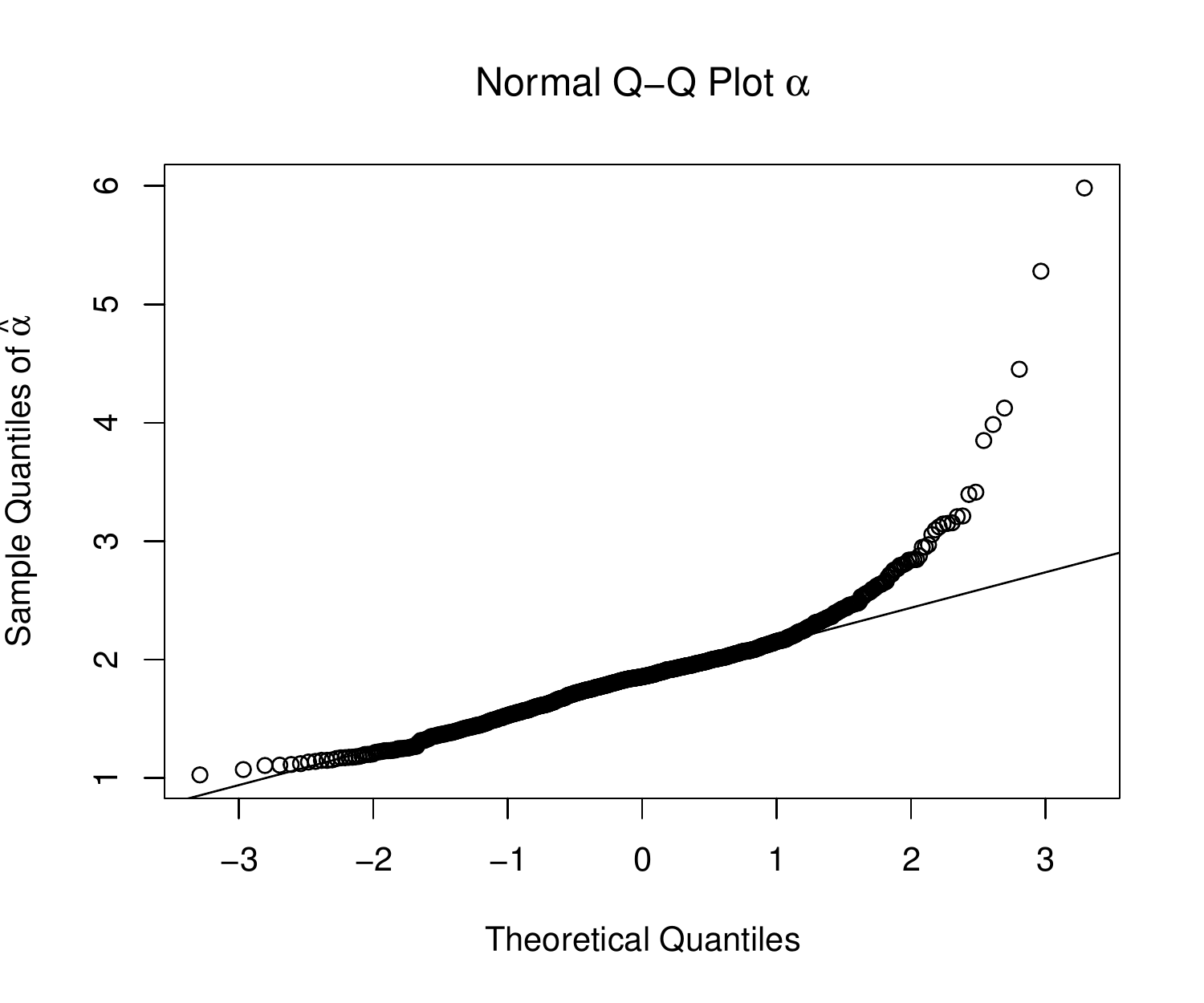}
\includegraphics[height=0.49\textwidth,angle=270]{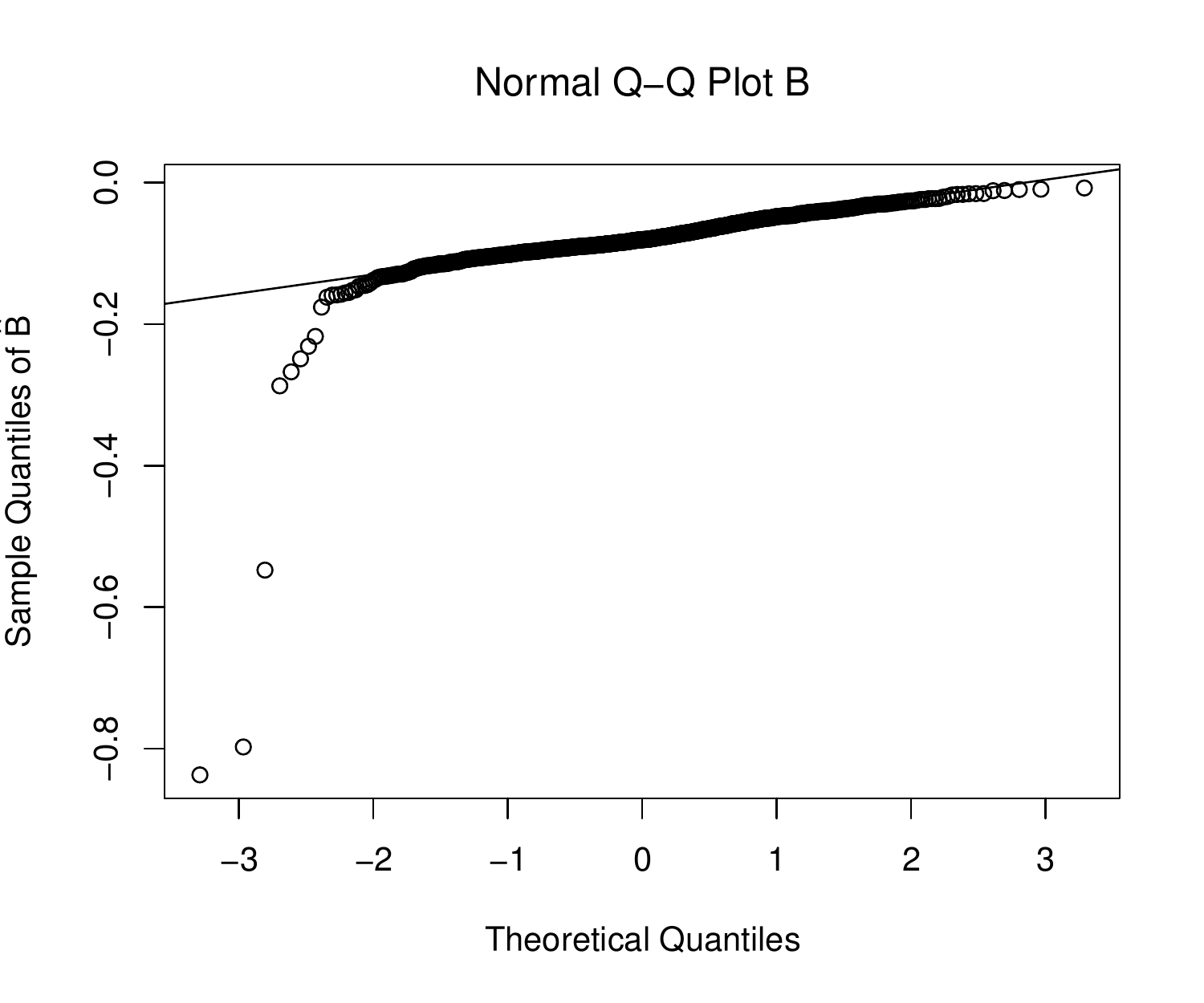}
\caption{Normal QQ-Plots of parameter estimates of 1000 paths of length 10000  of a supOU stochastic volatility model with short (upper set of plots) and with long memory (lower set of plots).}
\label{figSVsupOU10Kqq}
\end{figure}
\begin{figure}[p]
\center
\includegraphics[height=0.49\textwidth,angle=270]{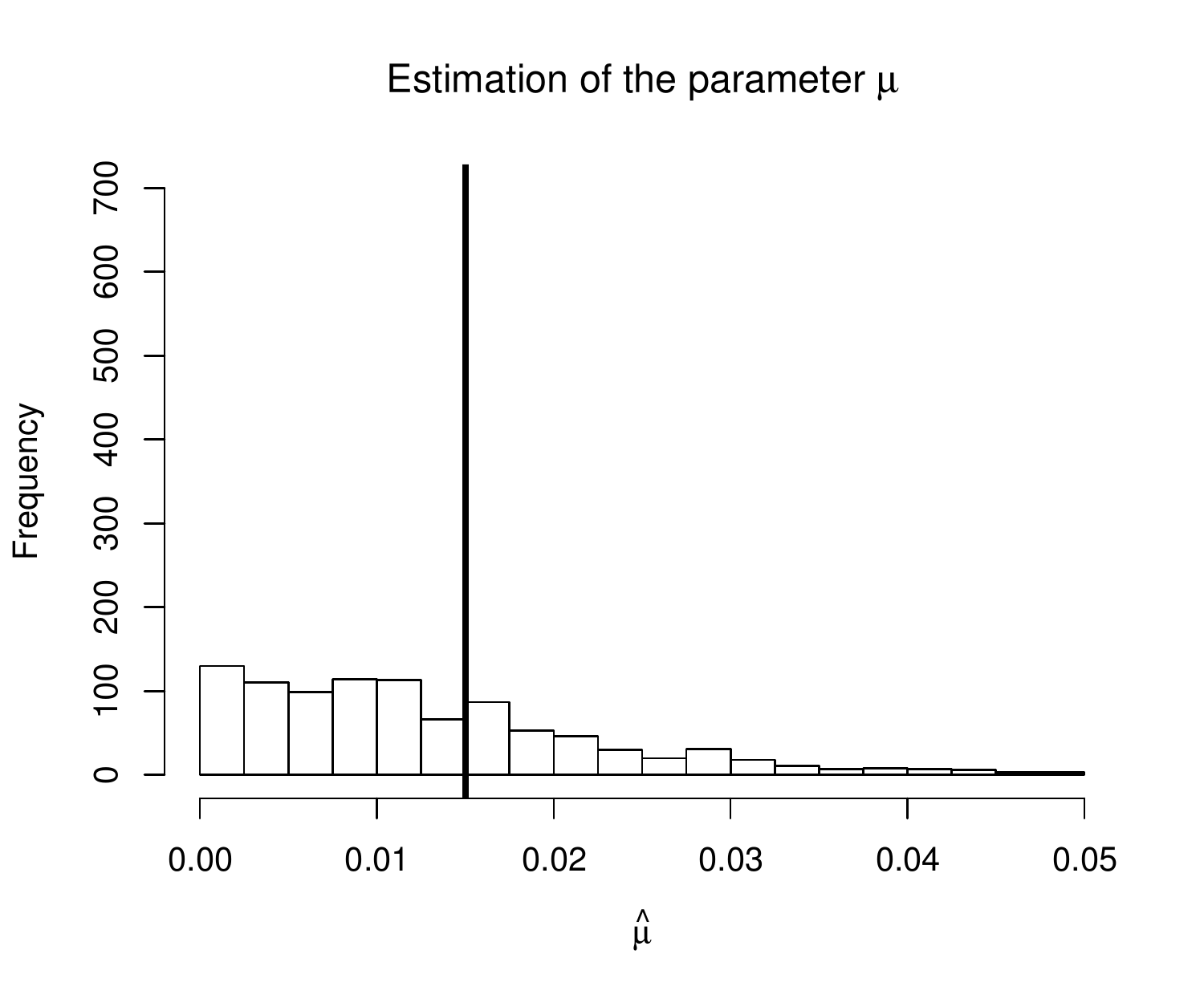}
\includegraphics[height=0.49\textwidth,angle=270]{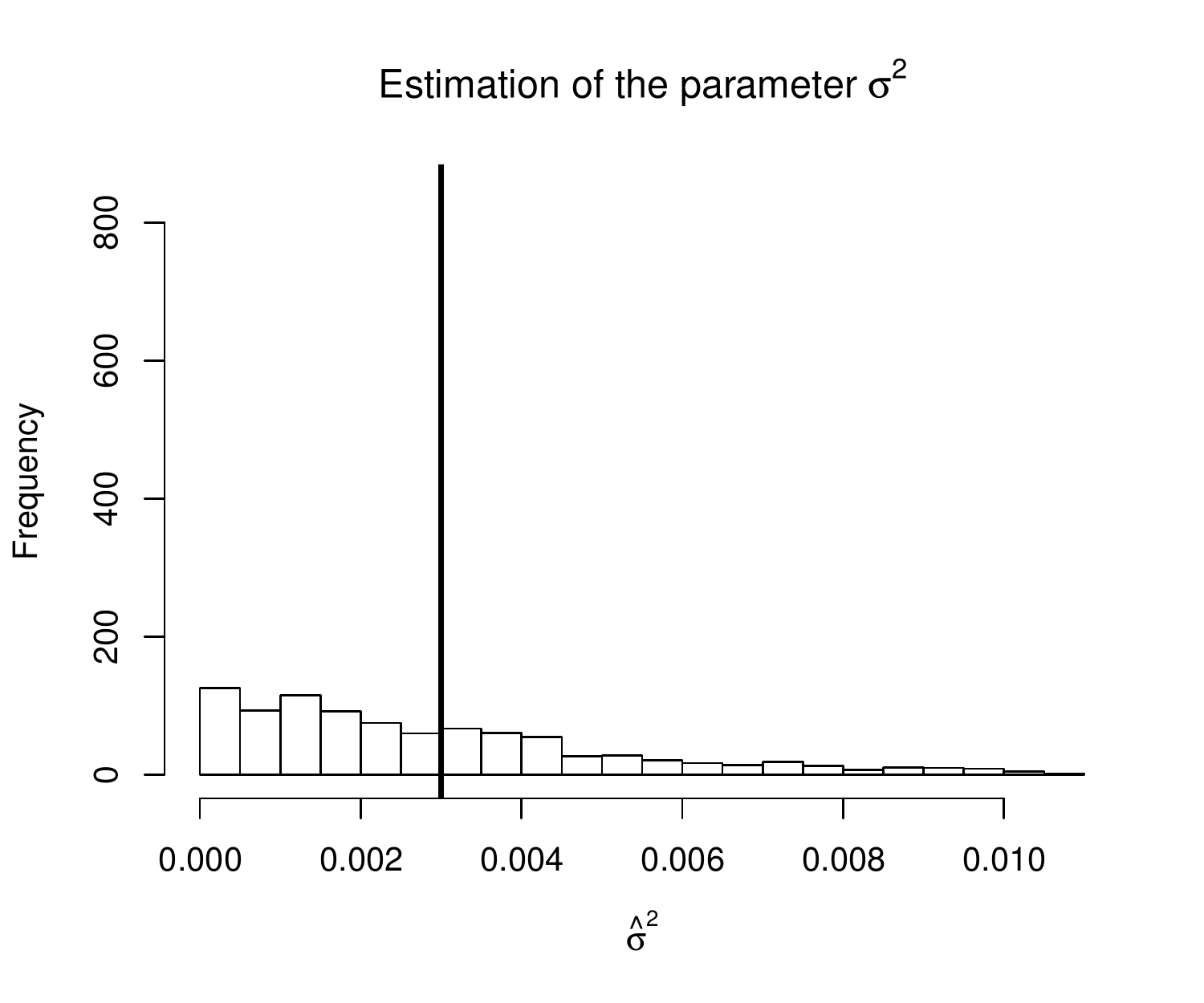}\\
\includegraphics[height=0.49\textwidth,angle=270]{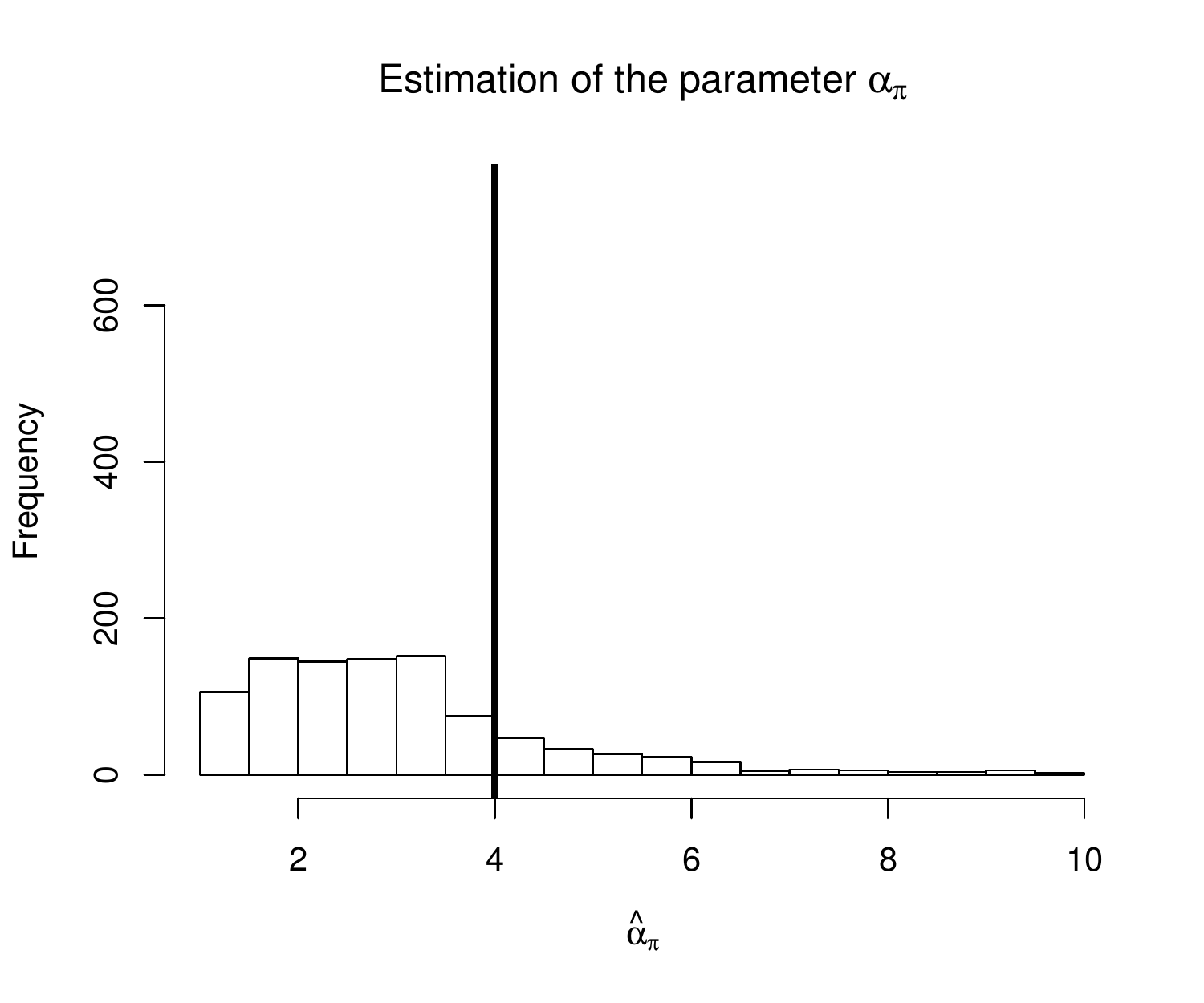}
\includegraphics[height=0.49\textwidth,angle=270]{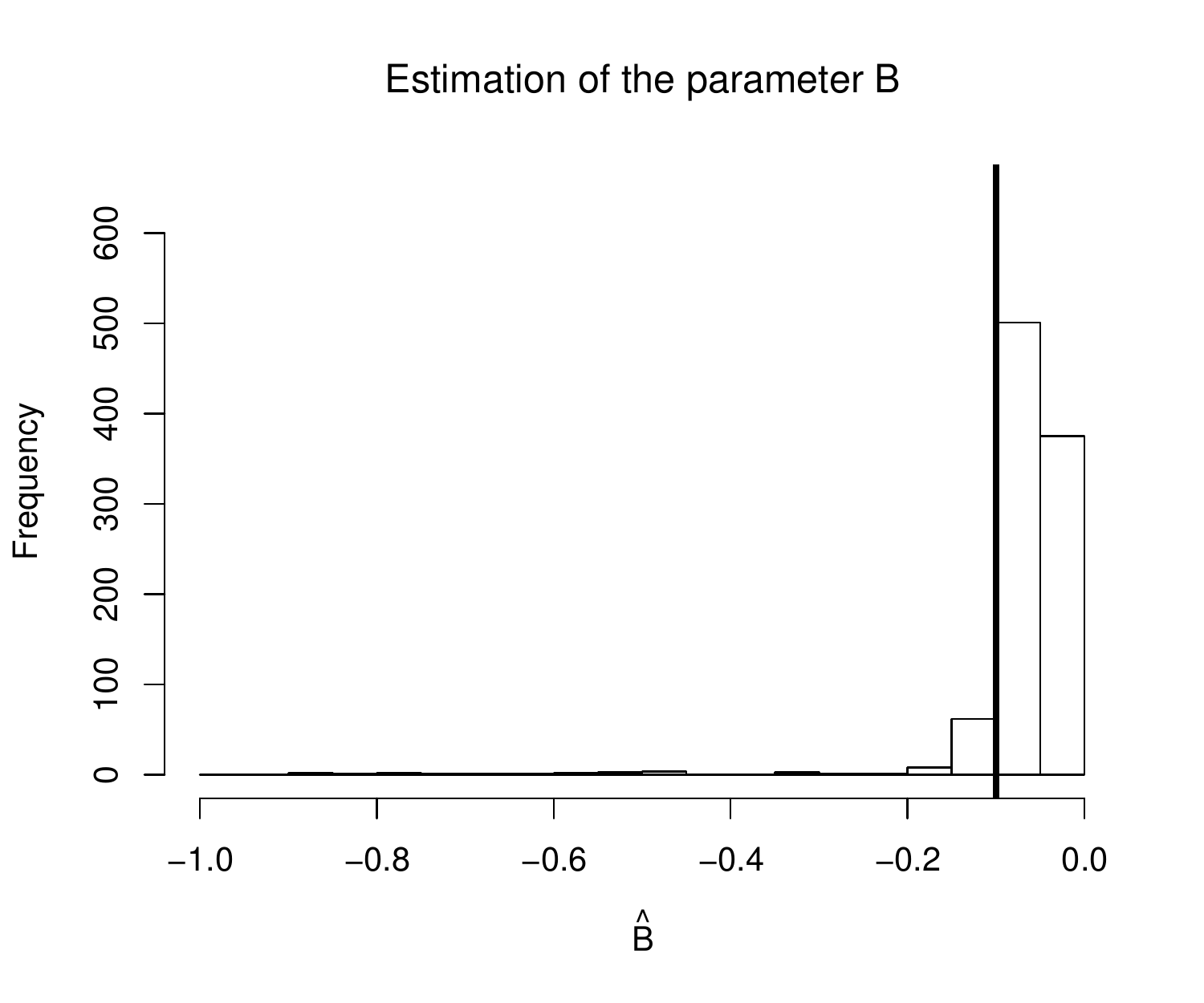}\\[5mm]
\includegraphics[height=0.49\textwidth,angle=270]{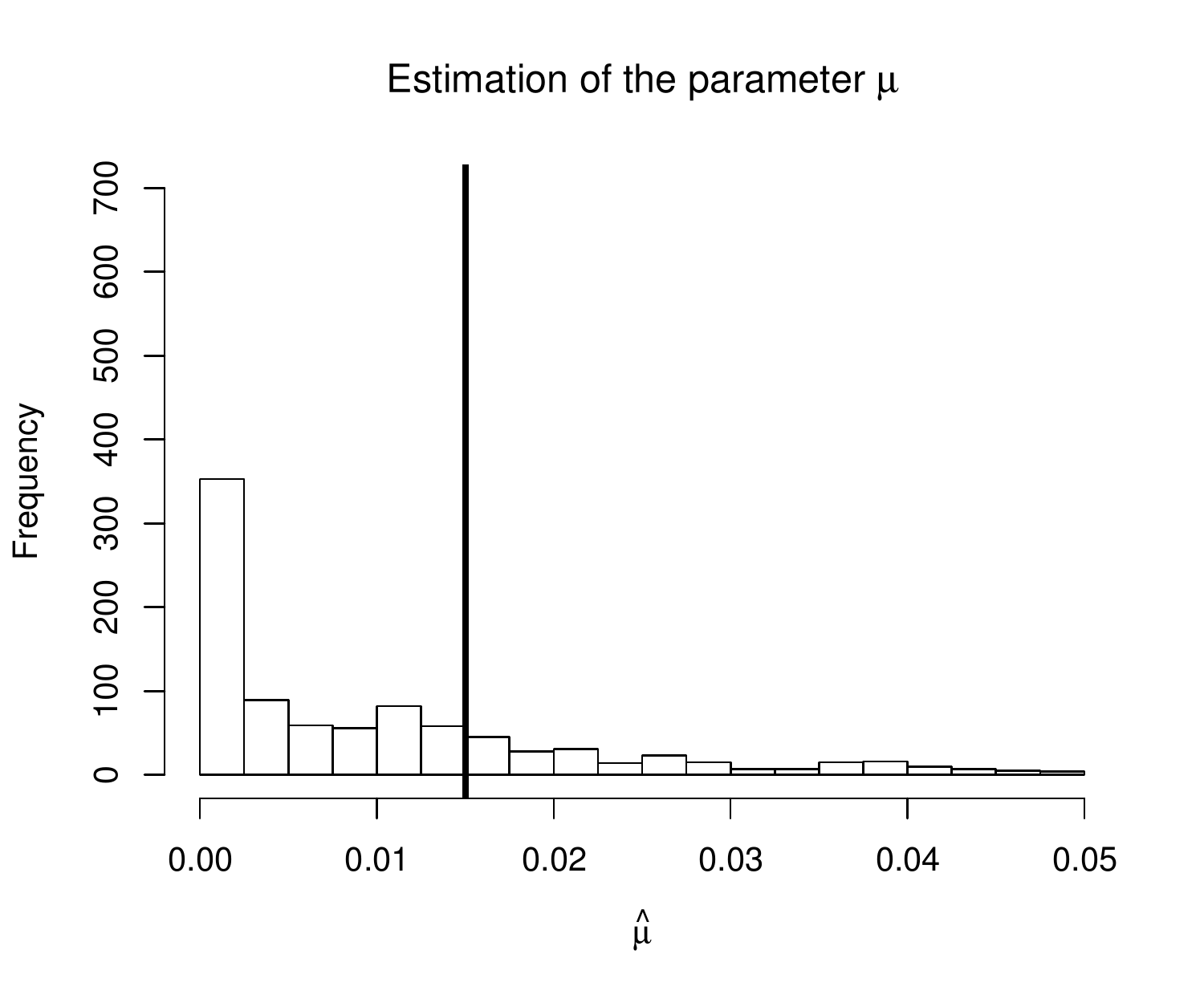}
\includegraphics[height=0.49\textwidth,angle=270]{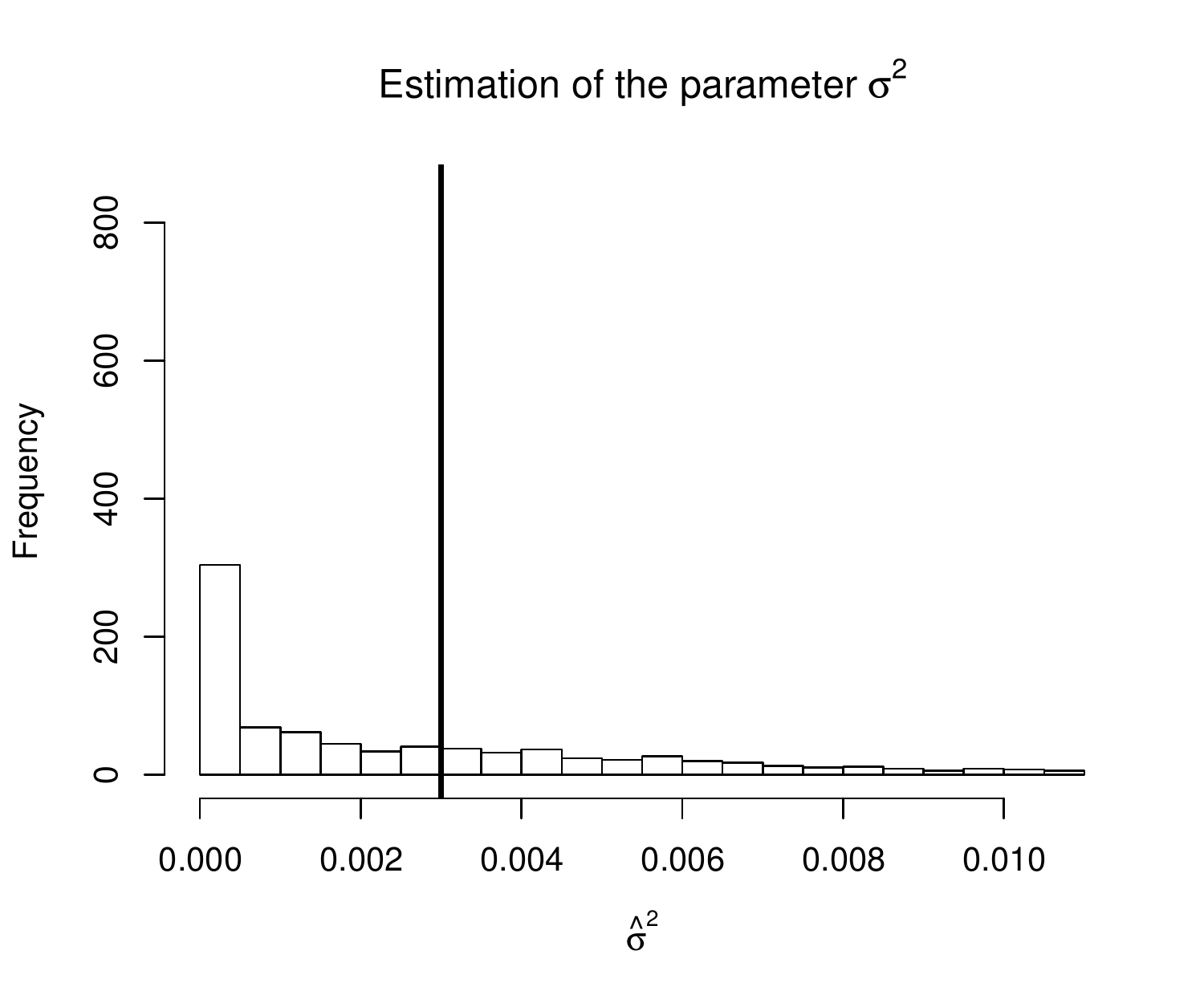}\\
\includegraphics[height=0.49\textwidth,angle=270]{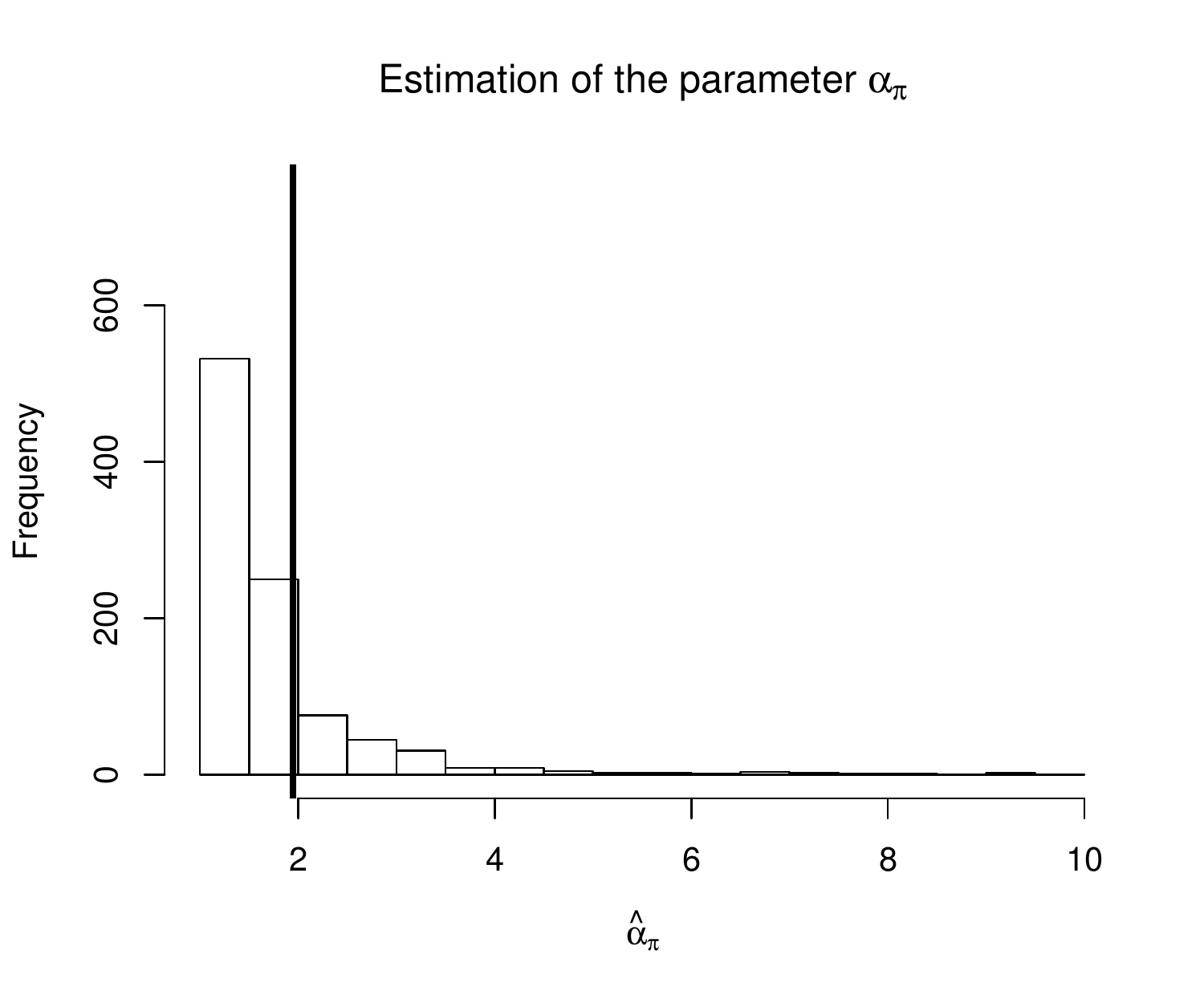}
\includegraphics[height=0.49\textwidth,angle=270]{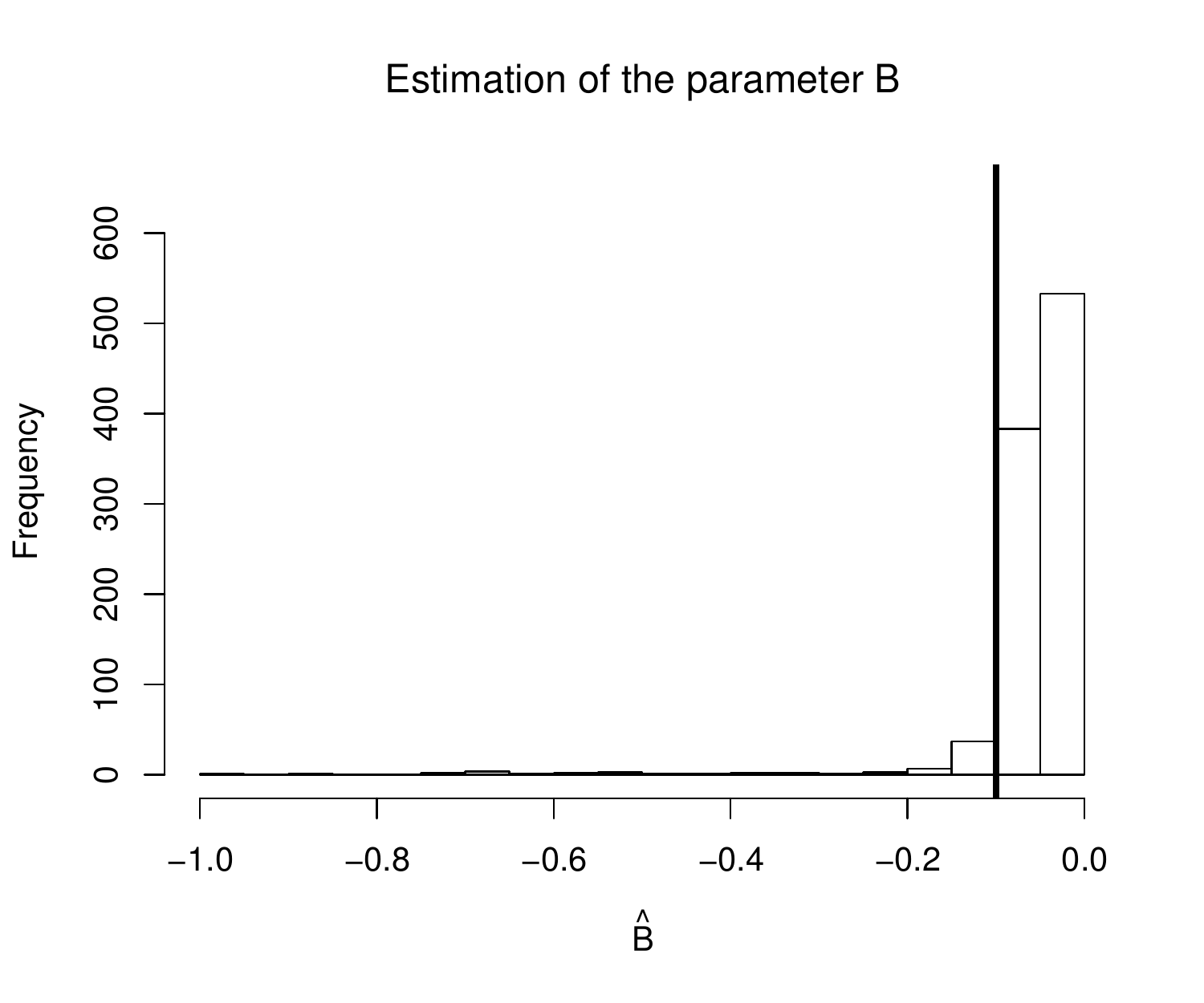}
\caption{Histograms of parameter estimates of 1000 paths of length 1000  of a supOU stochastic volatility model with short (upper set of plots) and with long memory (lower set of plots). The true values are indicated by black lines.}
\label{figSVsupOU1K}
\end{figure}

\begin{figure}[p]
\center
\includegraphics[height=0.49\textwidth,angle=270]{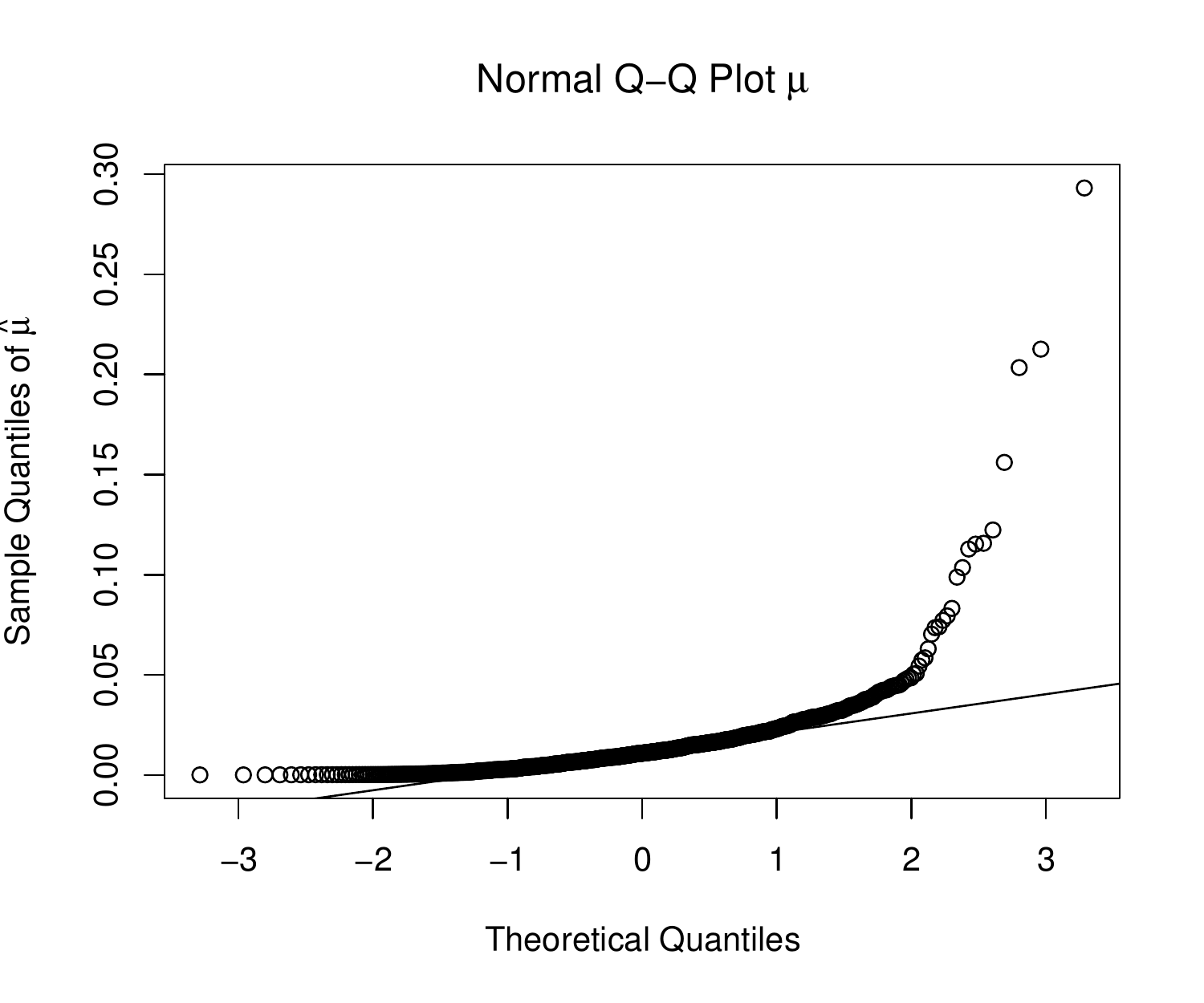}
\includegraphics[height=0.49\textwidth,angle=270]{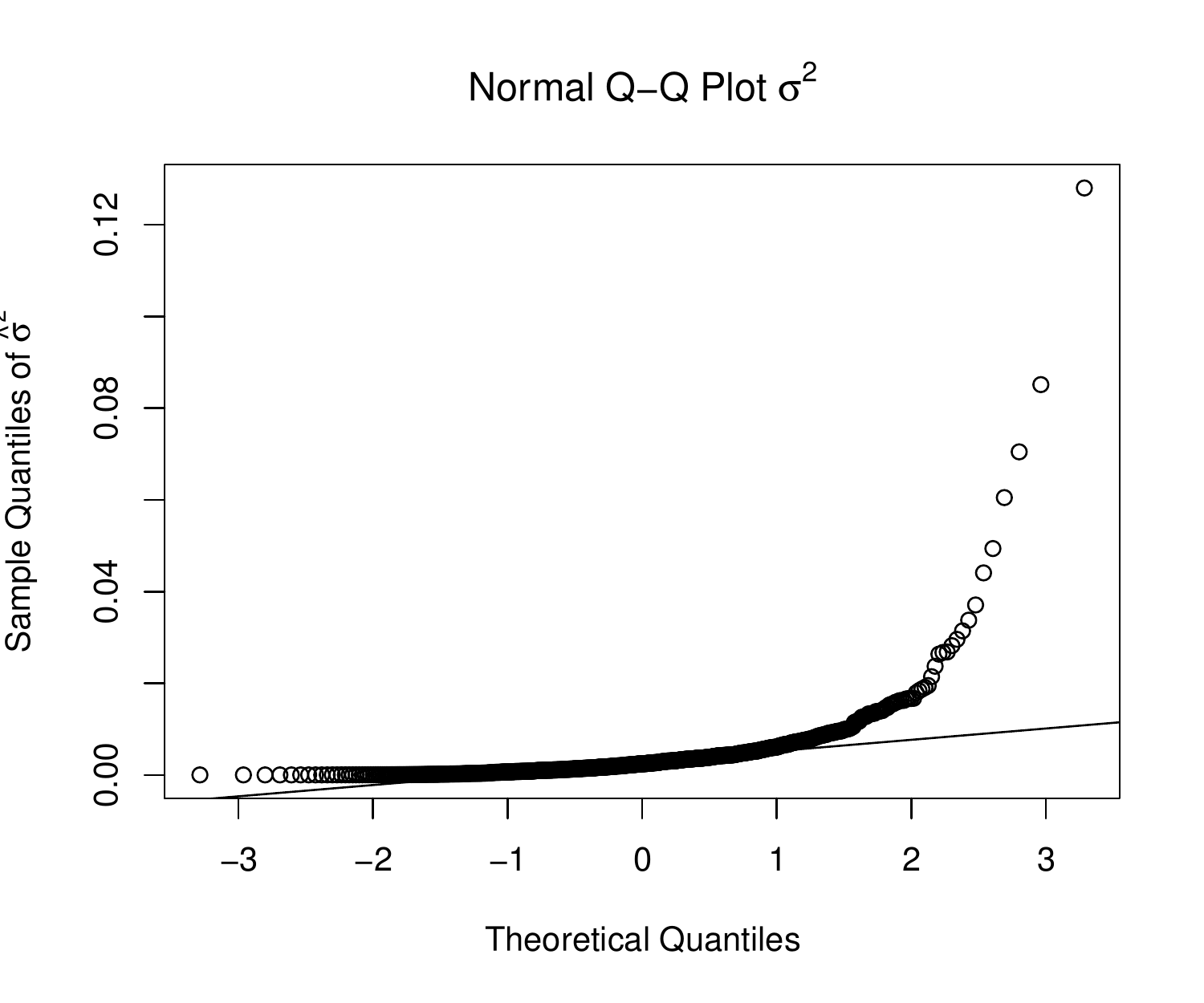}\\
\includegraphics[height=0.49\textwidth,angle=270]{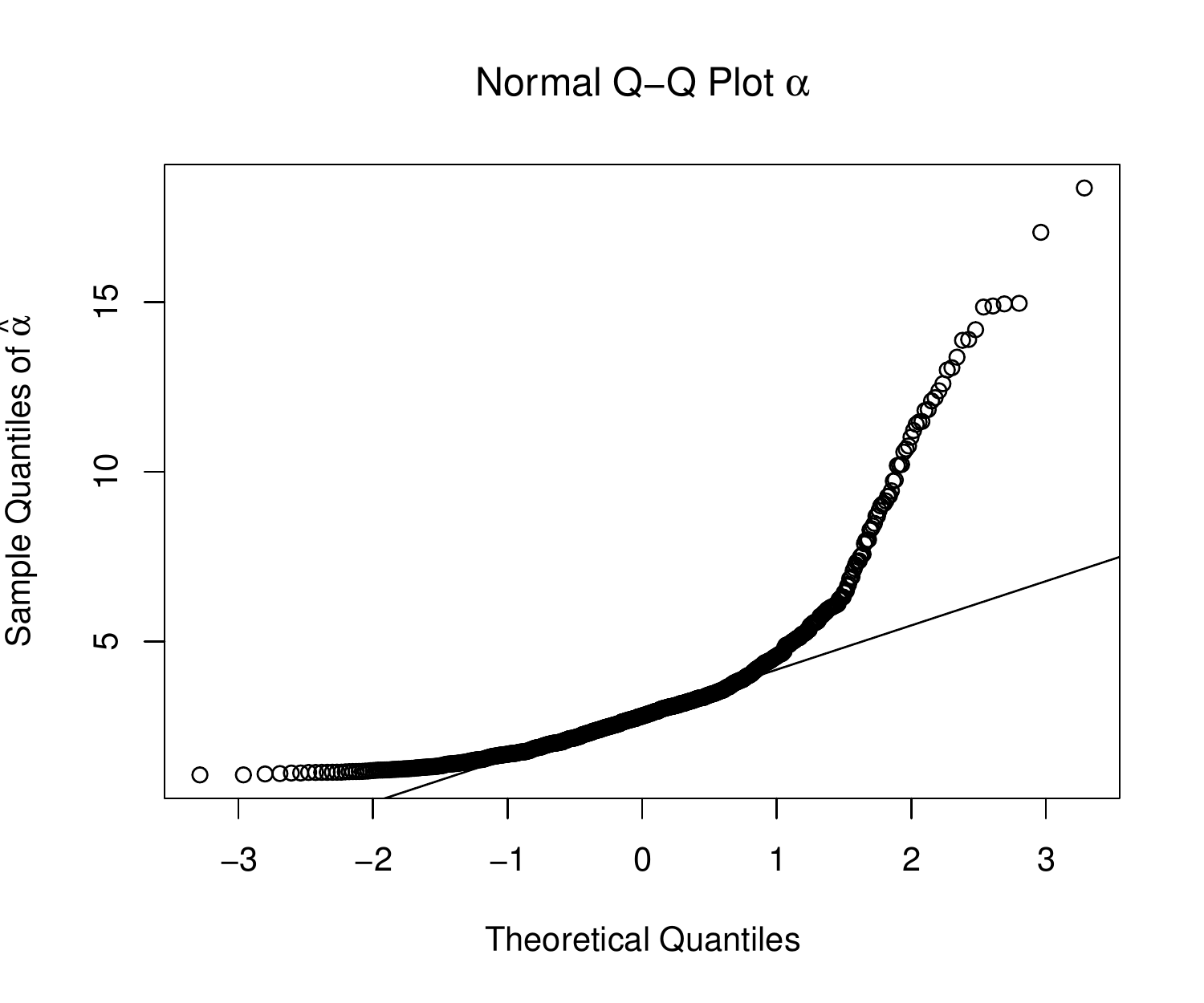}
\includegraphics[height=0.49\textwidth,angle=270]{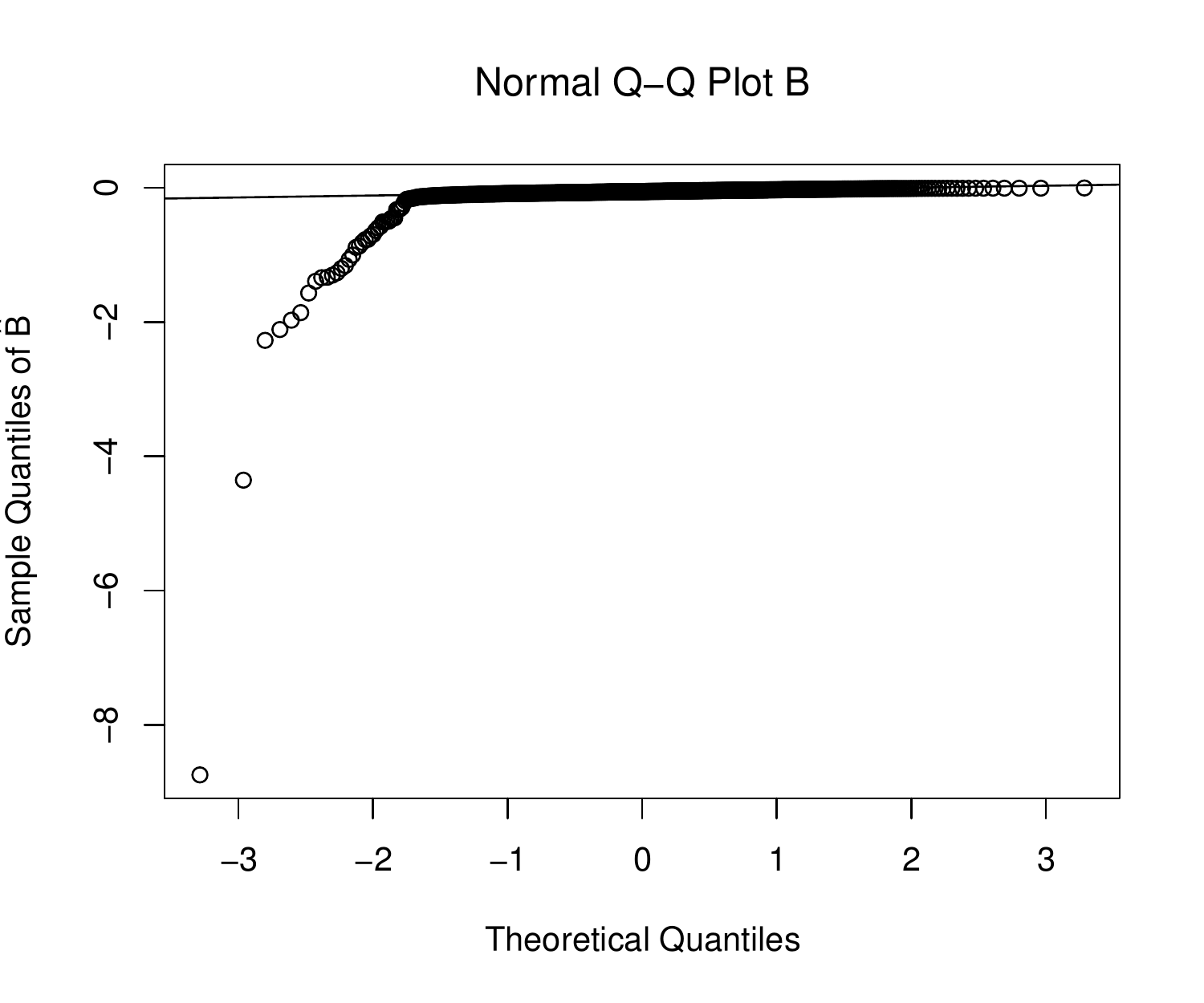}\\[5mm]
\includegraphics[height=0.49\textwidth,angle=270]{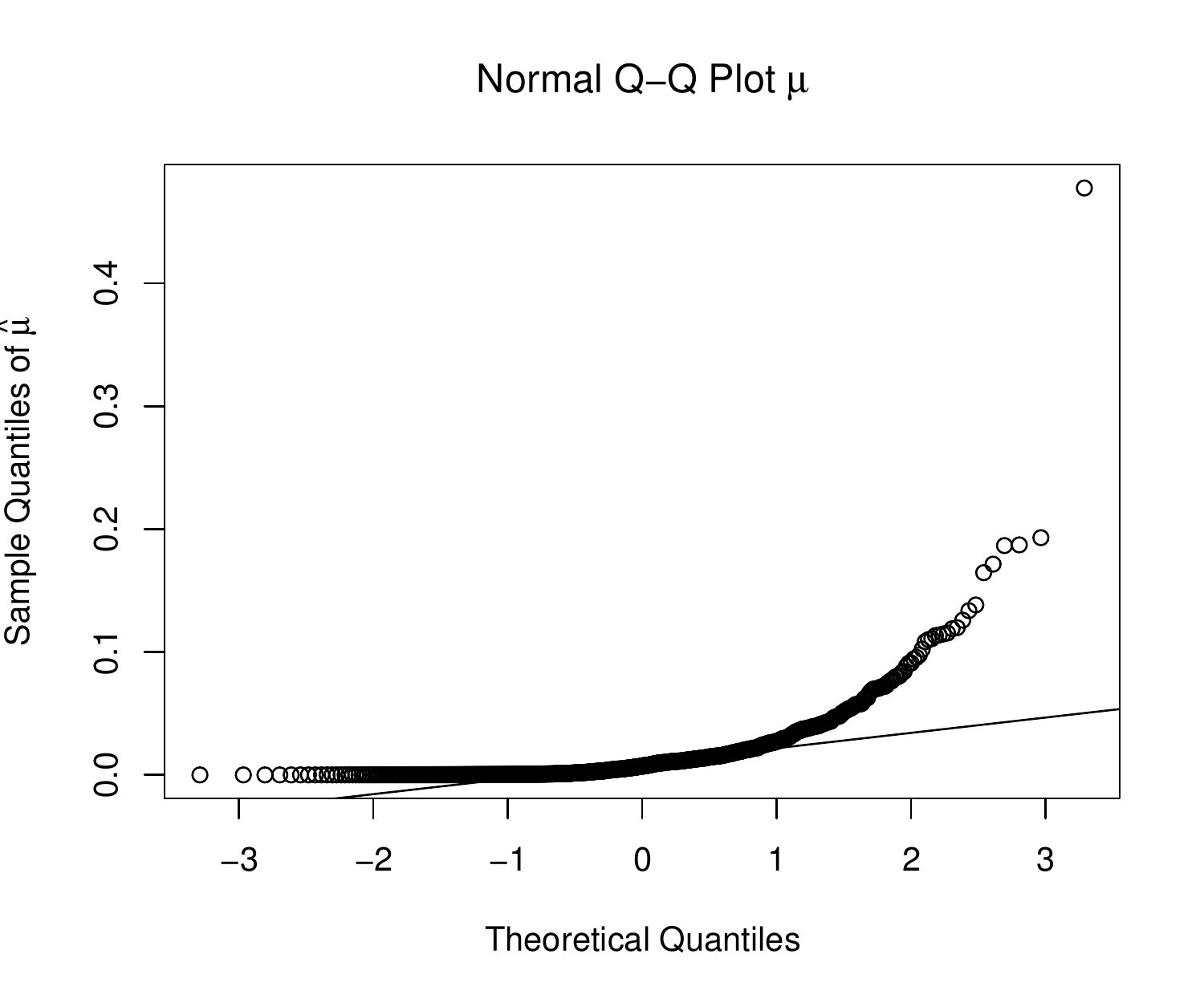}
\includegraphics[height=0.49\textwidth,angle=270]{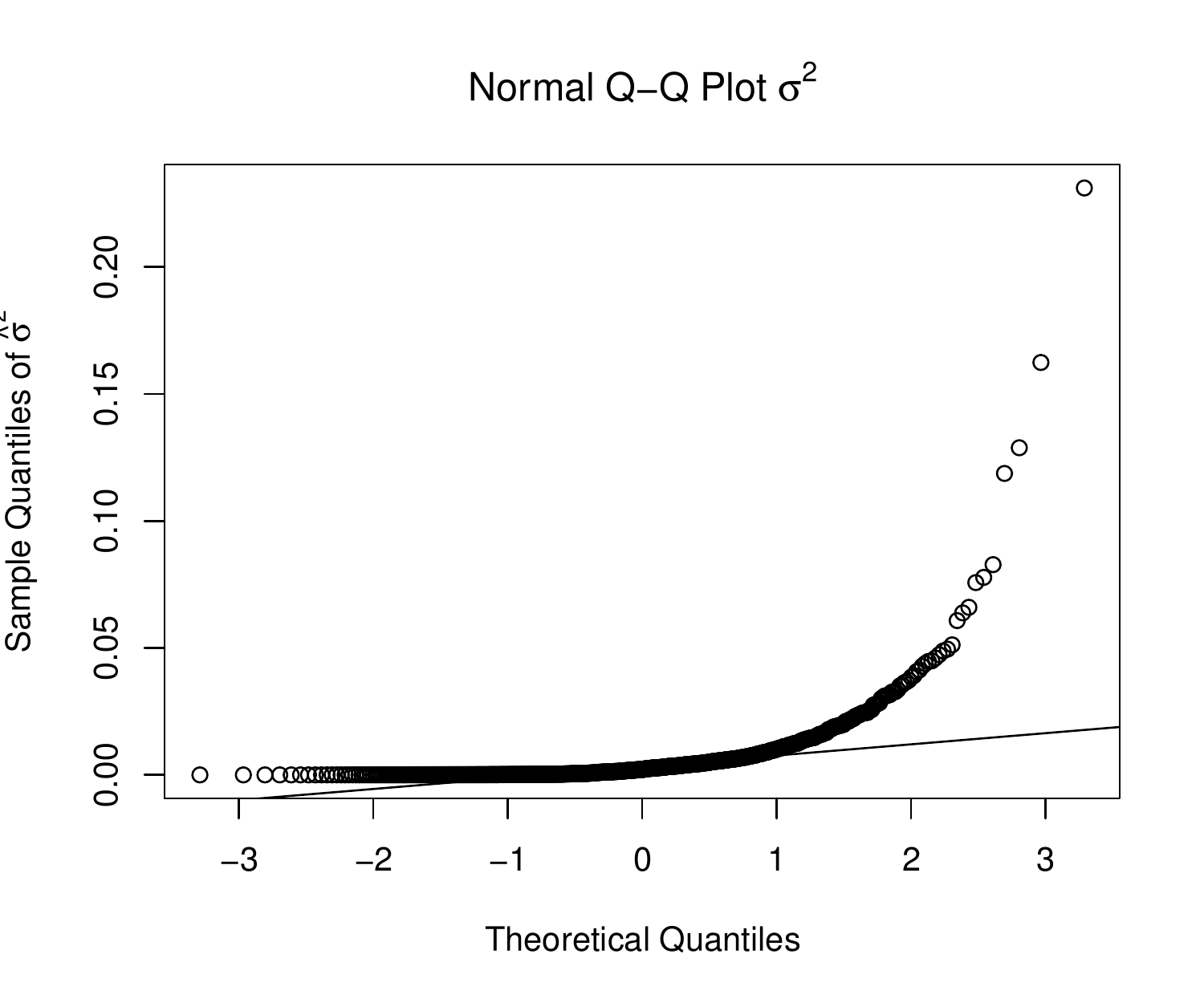}\\
\includegraphics[height=0.49\textwidth,angle=270]{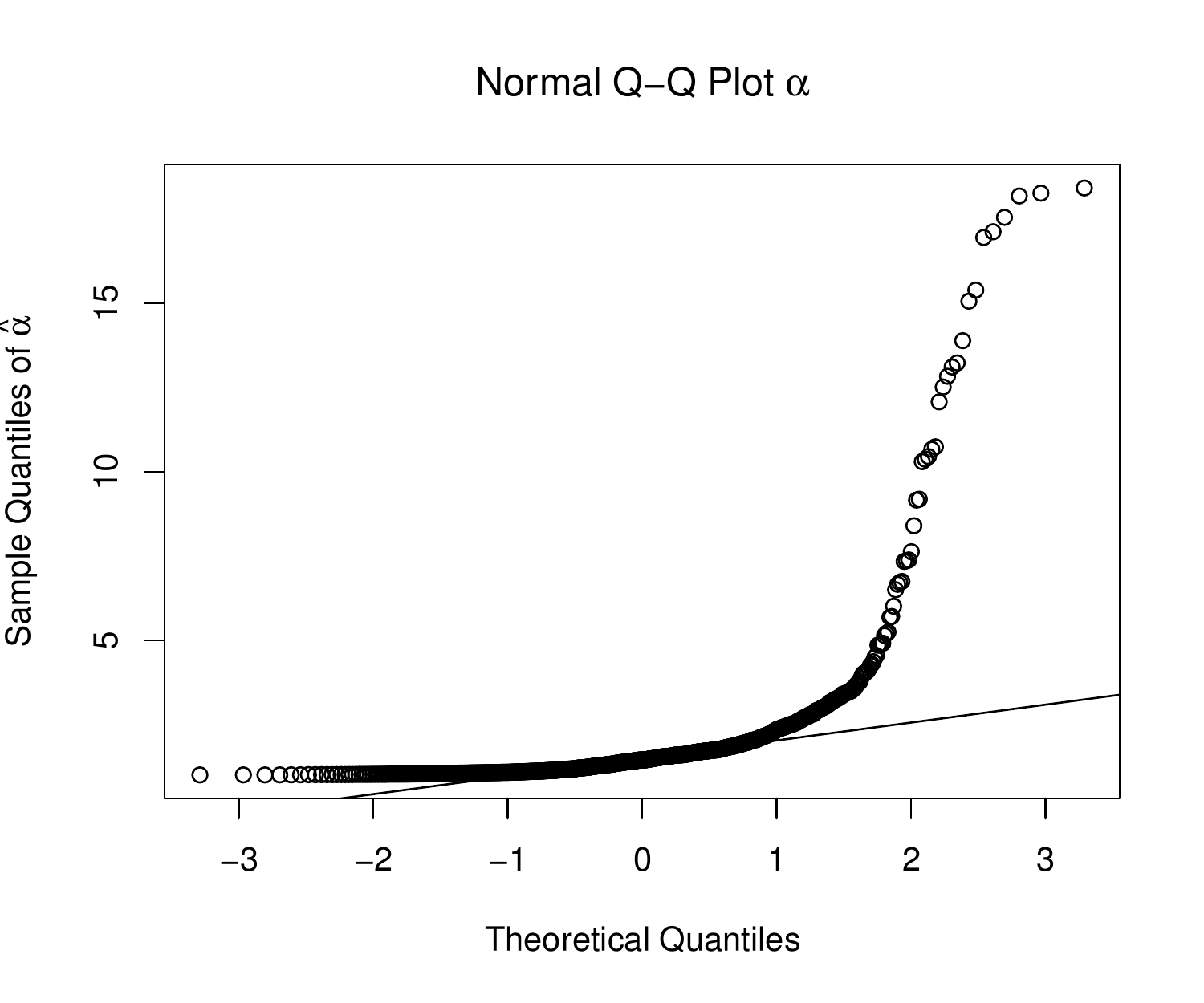}
\includegraphics[height=0.49\textwidth,angle=270]{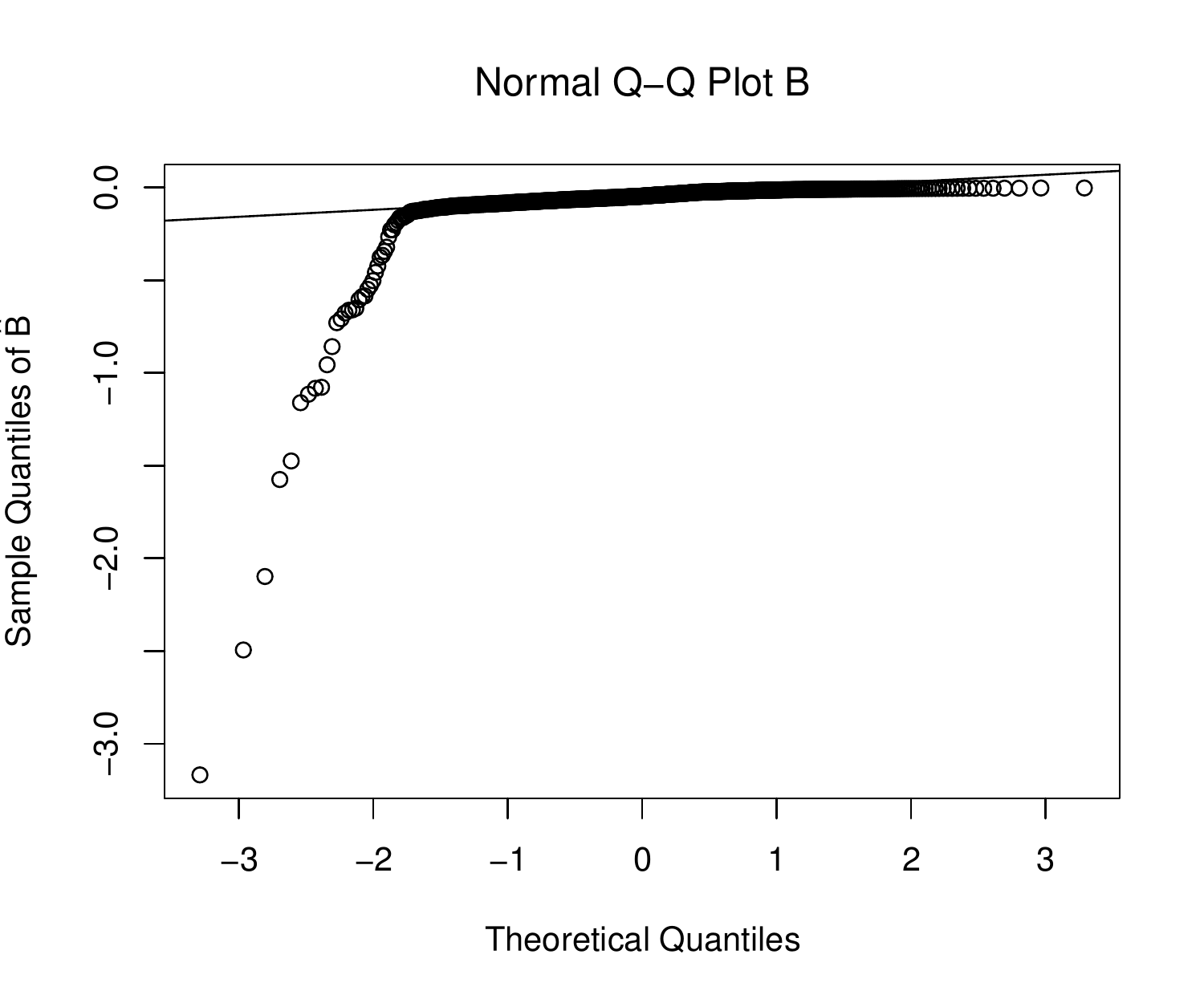}
\caption{Normal QQ-Plots of parameter estimates of 1000 paths of length 1000  of a supOU stochastic volatility model with short (upper set of plots) and with long memory (lower set of plots).}
\label{figSVsupOU1Kqq}
\end{figure}

If we compare these plots to the histograms (see Figure \ref{figsupOU1K}) and normal QQ-Plots (see Figure \ref{figsupOU1Kqq}) when we use only the last 1000 observations of every path for the estimators, we clearly see that then the quality of the estimation is considerably worse. This shows that reliable estimation of supOU processes needs a substantial amount of data. Clearly, the histograms are considerably more spread out (actually, the most extreme outliers -- less than 20 in all these cases -- are not shown in the histograms) and biases/asymmetries are much more distinct. In particular, it is noteworthy that the ``memory'' parameter $\alpha_\pi$ tends to be rather severely underestimated. Looking at the QQ-Plots, the estimators based on 1000 observations are close to normal for $\sigma^2$ and (to a lesser extent) for $\mu$, whereas for $\alpha_\pi$ and $B$ we have now distinctly non-normal tails.

\subsubsection{The supOU SV model}

Figures \ref{figSVsupOU10K} and \ref{figSVsupOU10Kqq} show the histograms and QQ-plots for the parameter estimators when using paths with 10000 log returns of the supOU SV model on a unit time grid. As is to be expected the estimation quality is worse than when using the observations of the supOU process (i.e. the latent volatility process itself), cf. Figures \ref{figsupOU10K} and \ref{figsupOU10Kqq}. This is clearly evident in the histograms for $\mu$ and $\sigma^2$ both in the short and long memory case. However, there appears to be much less asymmetry and bias, which may be surprising, but is also very nice, as in financial applications one typically has only log return data. The estimators for $\alpha_\pi$ and $B$ appear to be as good as when using the supOU observations 
in the long memory case and actually even  better in the short memory case, where $\alpha_\pi$ no longer tends to be underestimated. 

The normal QQ-plots  -- with the exception of $\alpha_\pi$ in the short memory case -- show now that the estimators are far away from a normal distribution. The histograms seem to suggest that there may well be a reasonable distributional limit result, but it should probably have heavy tails.

If we compare these plots to the histograms (see Figure \ref{figSVsupOU1K}) and normal QQ-Plots (see Figure \ref{figSVsupOU1Kqq}) when we use only the last 1000 observations of every path for the estimators, we clearly see again that then the quality of the estimation is considerably worse. Note that again the most extreme estimators are not depicted in the histogram. These were less than 30 data points, except for $\sigma^2$ in the short memory case (c. 70 points), $\mu$ in the long memory case (c. 70 points) and $\sigma^2$ in the long memory case (c. 150 points not shown).  So also for estimating supOU SV models it seems important to have a lot of data. Most interesting is a comparison with the estimations based on 1000 observations of the supOU process (Figures \ref{figsupOU1K}, \ref{figsupOU1Kqq}). Whereas $\mu$ and $\sigma^2$ are better estimated using the supOU/volatility data, the parameters $\alpha_\pi$ and $B$, which determine the decay of the acf, are clearly substantially better estimated using the simulated log return data both in the long and short memory case. Most notable is that  $\alpha_\pi$ is much less underestimated in the short memory case, although it still tends to be significantly underestimated.

\subsection{Empirical data illustration}

In an illustrative application to empirical data we estimate a supOU SV model under Assumption \ref{221} for the S\&P 500 using mean,  variance and lags $1$ to $5$  of the autocovariance function of the squared log returns. We use the daily time series from 03/29/2010 to 03/20/2013 which corresponds to 750 observations. The data source was Bloomberg Finance L.P. Before fitting the supOU SV model to the time series we demeaned it. 

The two step GMM estimation procedure gives $$(\hat{\mu},\hat{\sigma},\hat{\alpha}_{\pi},\hat{B}) = (6.1\times 10^{-6},1.4\times 10^{-9},6.8,-0.0086).$$
Note that the unit time scale is one day. The parameters can be ``annualized'' following Remark \ref{rem:annualize}.

\begin{figure}[h!]
\begin{center}
\includegraphics[height=0.49\textwidth,angle=270]{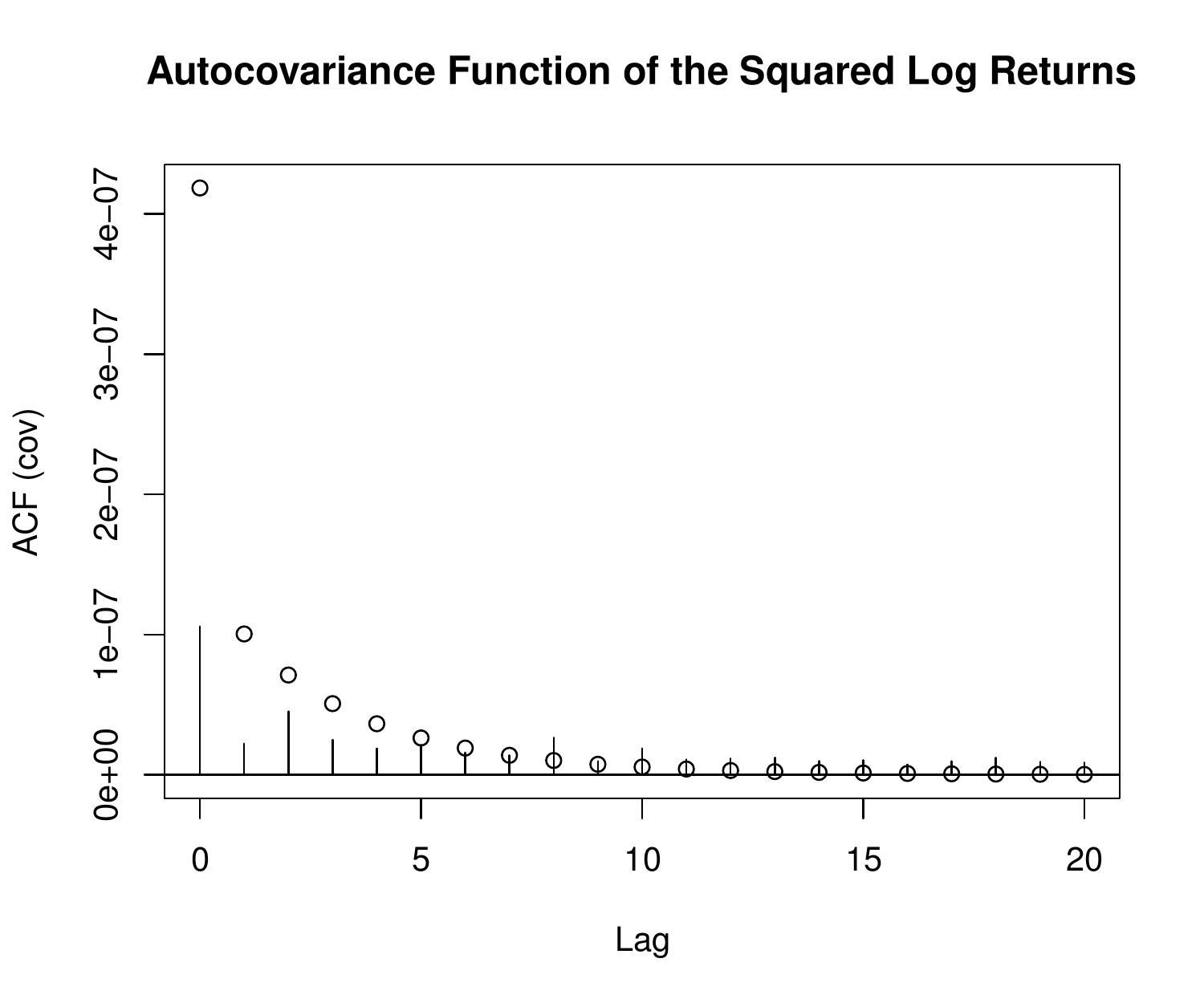}
\includegraphics[height=0.49\textwidth,angle=270]{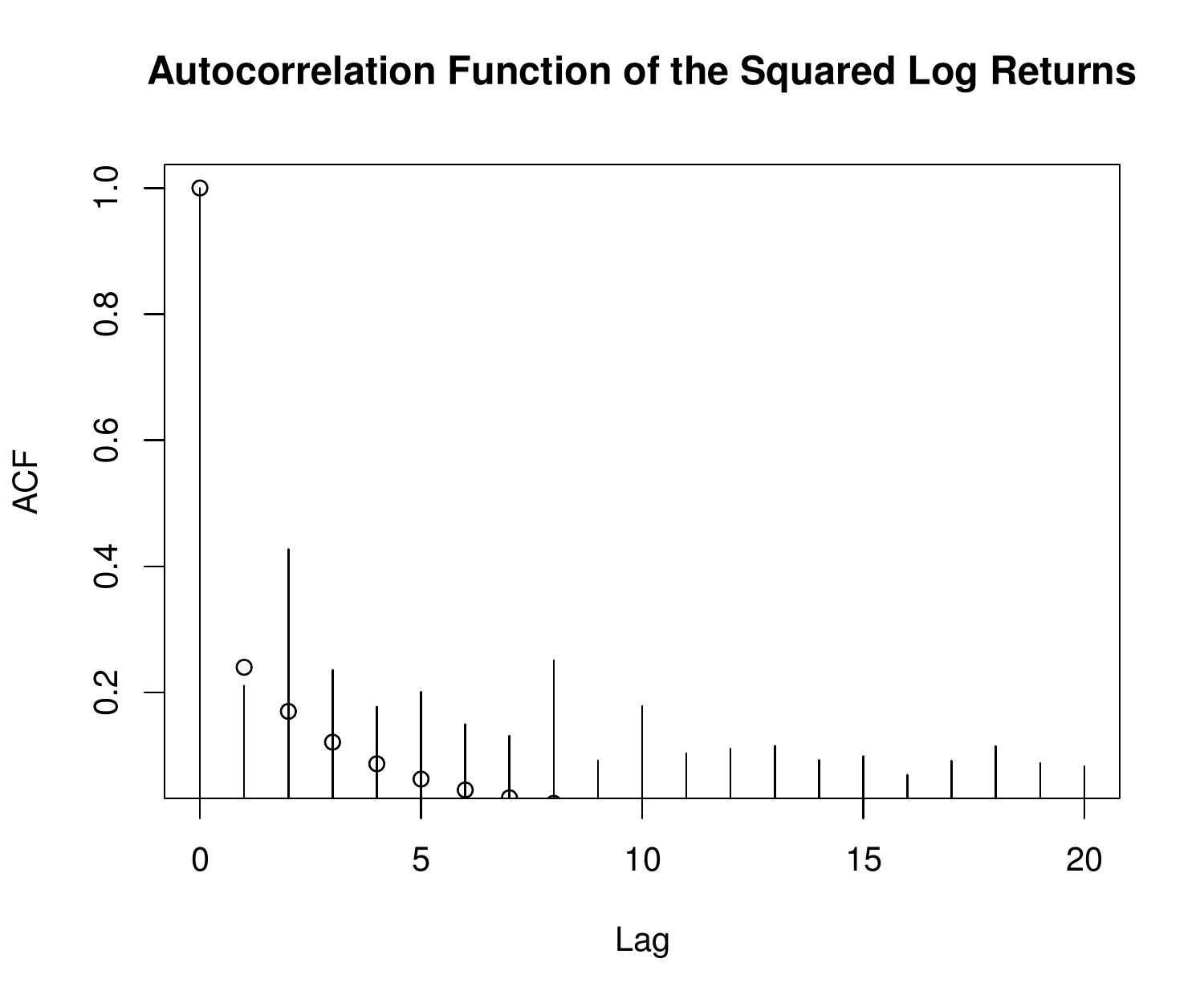}
\end{center}
\caption{The empirical autocovariance (left) and autocorrelation (right) function of the squared log returns of the S\&P 500 data set compared to the ones estimated in the \emph{first GMM step}. The circles depict the autocovariance/correlation function of the estimated supOU SV model and the bars depict the empirical one.}
\label{fig:datafirststep}
\end{figure}
\begin{figure}[h!]
\begin{center}
\includegraphics[height=0.49\textwidth,angle=270]{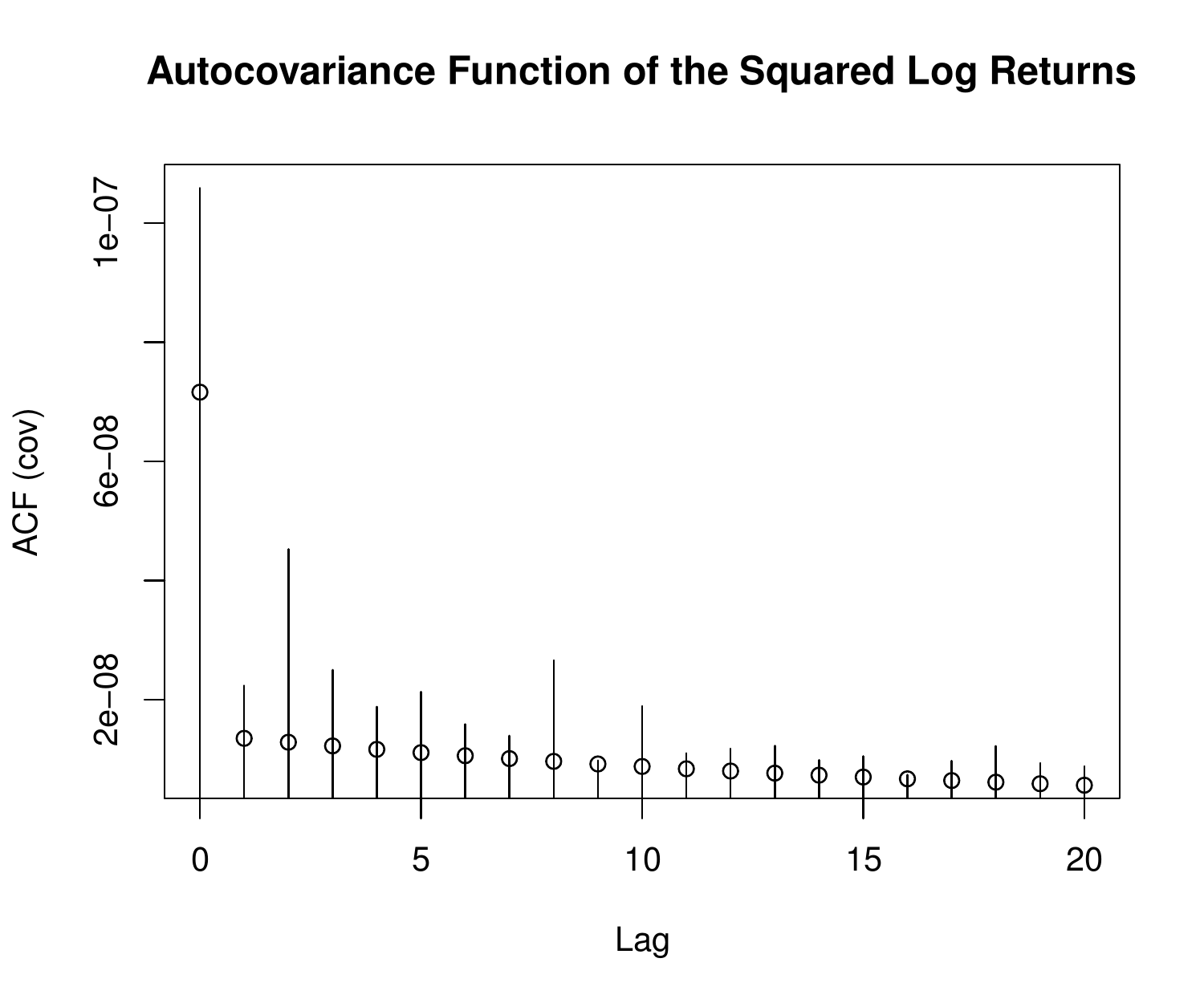}
\includegraphics[height=0.49\textwidth,angle=270]{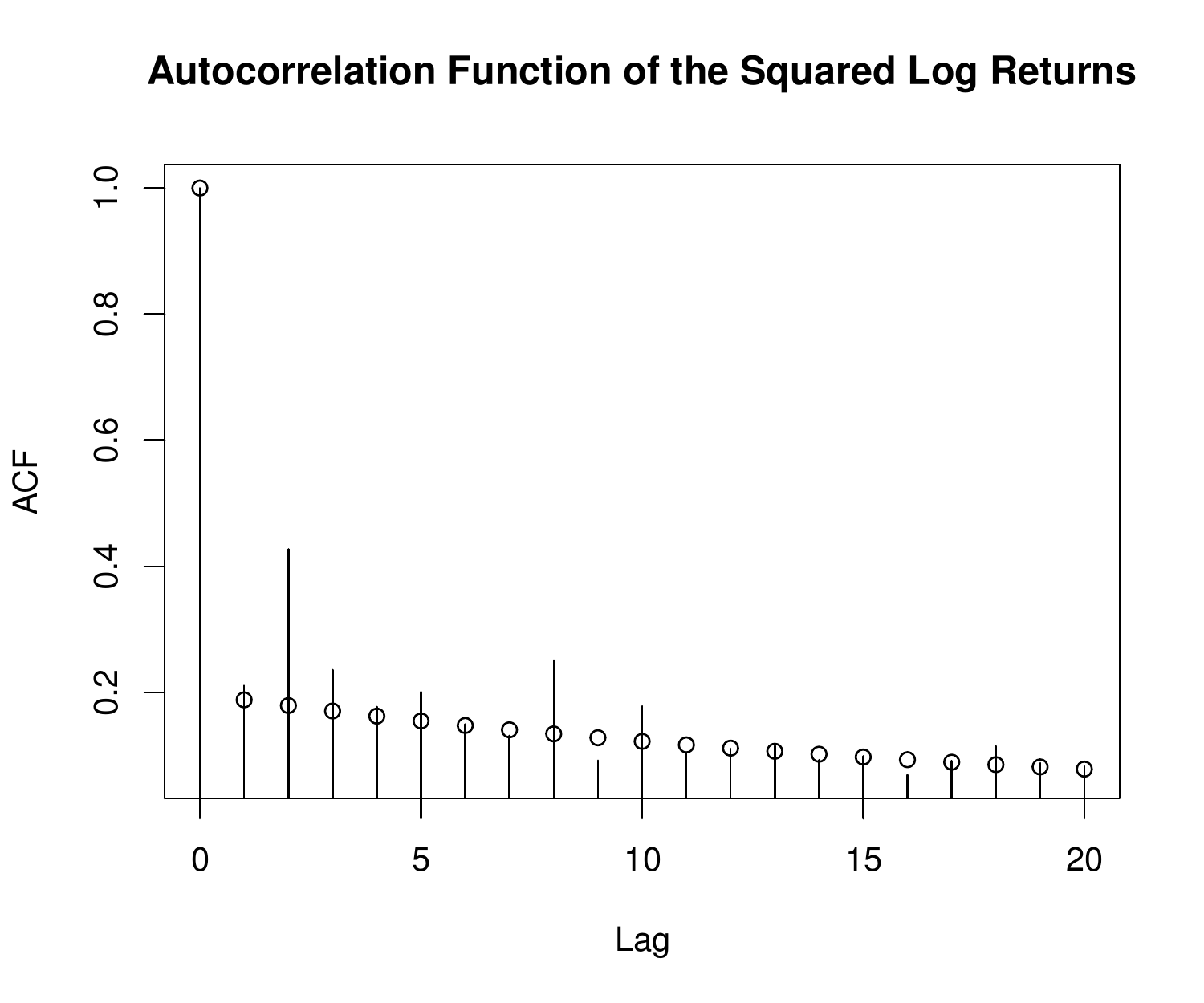}
\end{center}
\caption{The empirical autocovariance (left) and autocorrelation (right) function of the squared log returns of the S\&P 500 data set compared to the ones estimated by the \emph{two step GMM procedure}. The circles depict the  autocovariance/correlation function of the estimated supOU SV model and the bars depict the empirical one.}
\label{fig:datasecondstep}
\end{figure}

We plotted the empirical and the estimated autocovariance and  autocorrelation functions of the squared log returns in Figure \ref{fig:datafirststep} using the parameters obtained in the initial estimation step without weighting the moment conditions and in Figure \ref{fig:datasecondstep} using the parameters obtained from the two step GMM procedure. 
Comparing the figures shows that using an appropriate weighting matrix for the moment conditions is extremely important. Due to the sizes of magnitude the first step focuses on the mean of the squared log returns and thus is very inaccurate for the variance, as well as for the autocovariances and autocorrelations, whose ``decay parameter'' $\alpha$ is estimated to be $19$.

 In Figure \ref{fig:datasecondstep} we can see that after the second step the autocovariance function is well approximated. Regarding the autocorrelation function we have at the beginning  a small sinusoidal effect in the empirical autocorrelation function which the model by its nature cannot capture. However, in general the autocorrelation function is fitted extremely well, especially its decay. This is remarkable given that we used only the first five lags for the estimation.  The power decay at rate $h^{-5.8}$ of the model autocorrelation function clearly fits well with the rather slow decay of the empirical autocorrelation, but recall that the estimated model has no long memory effects as $\hat{\alpha}_{\pi} > 2$. Note that interestingly the calibration results of \cite{StelzerZavisin2014}, who report a calibrated $\alpha_\pi$ of $4.4$ for DAX option price data, are very similar in this respect. Of course, since we do not have any asymptotic distribution results for our estimators we cannot test whether the data suggests $\alpha_\pi>2$ (and thus short memory). Since we have only a ``semiparametric model'' (there are many very different L\'evy processes with the same $\mu$ and $\sigma^2$) and we do not know anything about the asymptotic dependence beyond autocorrelations, simulation based techniques like bootstrapping seem not to be feasible to attack this question either. If we look at our simulation study (with somewhat different parameters and a special choice for the underlying L\'evy process) we see that there $\alpha_\pi$ tended to be significantly underestimated, especially in the long memory case. This gives at least some support that our estimators for the S\&P 500 data  suggest that there is no long memory. In this context it seems also worth recalling that in our simulation study the ``acf decay parameters'' $\alpha_\pi$ and $B$ could be estimated comparably well from log return data.

\section{Conclusion}

This paper developed a GMM estimation method for supOU processes and supOU SV models which are of particular interest because of the possibility of long memory effects. In a simulation study the estimators behaved quite well and the results indicate that one has not only consistency (as shown in the paper) but also nice distributional limits, probably asymptotic normality when using supOU data.

How the estimators are actually distributed (e.g. asymptotic normality) is future work beyond the scope of the present paper. First one needs to show central limit theorems for supOU processes. The standard way via strong mixing appears very hard since supOU processes are non-Markovian and the usual approach to show strong mixing is to embed the model in a Markovian set-up and to show geometric ergodicity. In the future we intend to establish asymptotic distribution results of the estimators using alternative approaches like weak dependence. This may also allow to employ non-parametric techniques like bootstraping.

\bigskip
{\bf Acknowledgements}

The authors are very grateful to the editor and two anonymous referees for very helpful suggestions and to Christian Pigorsch for influential discussions.

\bibliographystyle{abbrvnat}

\bigskip
\noindent
\parbox[t]{.3\textwidth}{
Robert Stelzer\\
Institute of Mathematical Finance\\
Ulm University \\
Helmholtzstraße 18\\
89081 Ulm, Germany\\
robert.stelzer@uni-ulm.de\\
\url{http://www.uni-ulm.de/mawi/finmath.html} } \hfill
\parbox[t]{.3\textwidth}{
Thomas Tosstorff\\
\\
\\
\\
\\\\
 ttosstorff@yahoo.de
}
\hfill
\parbox[t]{.3\textwidth}{
Marc Wittlinger\\
Institute of Mathematical Finance\\
Ulm University \\
Helmholtzstraße 18\\
89081 Ulm, Germany\\
marc.wittlinger@posteo.de
}
\end{document}